\documentclass[11pt,english]{article}
 \usepackage[english]{babel}
\usepackage{cite} 

\usepackage{graphicx,subfigure}

\usepackage[usenames, dvipsnames]{xcolor}
\usepackage[utf8x]{inputenc}
\usepackage[T1]{fontenc}
\usepackage{tikz}
\usepackage{pgfplots}

\usepackage{amsmath}
\usepackage{amsfonts}
\usepackage{amssymb}
\usepackage{amsthm}

\usepackage{esint}



\usepackage[colorlinks=true,urlcolor=blue,
citecolor=red,linkcolor=blue,linktocpage,pdfpagelabels,
bookmarksnumbered,bookmarksopen,backref=page]{hyperref}

\textwidth=165mm \textheight=235mm \hoffset=-20mm \voffset=-20mm

 \newcommand{\IM}{\mathrm{Im}\,}
 \newcommand{\RE}{\mathrm{Re}\,}
 
 

\newtheorem{theorem}{Theorem}[section]

 \newtheorem{proposition}[theorem]{Proposition}
 \theoremstyle{definition}
 \newtheorem{definition}[theorem]{Definition}
 \theoremstyle{remark}
 \newtheorem{remark}[theorem]{Remark}
 
 \numberwithin{equation}{section}

\newcommand{\R}{\mathbb R}
\begin{author}
{Jaime Angulo Pava $^1$ and Ram\'on G. Plaza $^2$}
\end{author}

\begin{title}{Unstable kink-soliton profiles for the sine-Gordon equation on a $\mathcal Y$-junction graph with $\delta$-interaction}
\end{title}
\begin{document}
\maketitle

\centerline{$^1$ Department of Mathematics,
IME-USP}
 \centerline{Rua do Mat\~ao 1010, Cidade Universit\'aria, CEP 05508-090,
 S\~ao Paulo, SP (Brazil)}
 \centerline{\tt angulo@ime.usp.br}
 \centerline{ $^2$ Instituto de Investigaciones en Matem\'aticas Aplicadas
   y en Sistemas,}
 \centerline{Universidad Nacional Aut\'{o}noma de M\'{e}xico,  Circuito Escolar s/n,}
 \centerline{Ciudad Universitaria, C.P. 04510 Cd. de M\'{e}xico (Mexico)}
 \centerline{\tt  plaza@mym.iimas.unam.mx}

\begin{abstract}
The aim of this work is to establish a linear instability result of stationary, kink and kink/anti-kink soliton profile solutions for the sine-Gordon equation on a metric graph with a structure represented by a $\mathcal Y$-junction. The model considers boundary conditions at the graph-vertex of $\delta$-interaction type, or in other words, continuity of the wave functions at the vertex plus a law of Kirchhoff-type for the flux. It is shown that kink and kink/anti-kink soliton type stationary profiles are linearly (and nonlinearly) unstable. For that purpose, a linear instability criterion that provides the sufficient conditions on the linearized operator around the wave to have a pair of real positive/negative eigenvalues, is established. As a result, the linear stability analysis depends upon of the spectral study of this linear operator and of its Morse index. The extension theory of symmetric operators, Sturm-Liouville oscillation results and analytic perturbation theory of operators are fundamental ingredients in the stability analysis. A comprehensive study of the local well-posedness of the sine-Gordon model in $\mathcal E(\mathcal Y) \times L^2(\mathcal{Y})$ where $\mathcal E(\mathcal Y) \subset H^1(\mathcal{Y})$ is an appropriate energy space, is also established. The theory developed in this investigation has prospects for the study of the instability of stationary wave solutions of other nonlinear evolution equations on metric graphs.
\end{abstract}

\textbf{Mathematics  Subject  Classification (2010)}. Primary
35Q51, 35J61, 35B35; Secondary 47E05.\\

\textbf{Key  words}.  sine-Gordon model, metric graphs, kink and anti-kink solutions, $\delta$-type interaction, analytic perturbation theory, extension theory, instability.

\section{Introduction}

In recent years, there has been a growing interest among the scientific community in modeling and analyzing evolution problems described by partial differential equations (PDEs) on \emph{graphs}. A metric graph is a network-shaped structure of edges which are assigned a length (that is, a metric) and connected at vertices according to boundary conditions which determine the dynamics on the network. This trend has been mainly motivated by the demand of reliable mathematical models for different phenomena in branched systems which, in meso- or nano-scales, resemble a thin neighborhood of a graph, such as Josephson junction networks \cite{NakO76,NakO78}, electric circuits \cite{BaCh13}, unidirectional shallow water flow in a network \cite{BoCa08}, blood pressure waves in large arteries \cite{McDo11}, or nerve impulses in complex arrays of neurons \cite{Sco03}, just to mention a few examples (see also \cite{Berkolaiko, BK, BeK, BurCas01, CaoMa95, Fid15, Kuch04,  Mehmeti, Mug15} and the many references therein). One of the main difficulties consists on the fact that metric graphs are not manifolds. From a mathematical viewpoint, the nature of a PDE model on a graph is tantamount to a system of PDEs defined on appropriate intervals in which the coupling is given exclusively through the boundary conditions at the vertices, known as the ``topology of the graph'' (see, for example, a recent review of the extension of Hamiltonian dynamics to non-manifold structures by Bibikov and Prokhorov \cite{BiP09}). Hence, both the model equation and the geometry are complex in general, making the problem difficult to tackle. A first step is to consider simple geometries, such as star-shaped graphs and $\mathcal{Y}$-junctions. Another simplification is to solve linear equations, such as Schr\"{o}dinger operators, on graphs. In this case the system is called a \emph{quantum graph} and there is a broad literature on the subject (see, e.g., \cite{Berko17,Berkolaiko,BK,BlaExn08,Kuch04,Kuch05}).

The extension of the analysis to \emph{nonlinear} dispersive equations on graphs is an emerging subfield that has recently attracted the attention of mathematicians and physicists alike. In particular, the prototype of graph geometry often considered is the \emph{star graph}, namely, a metric graph with $N$ semi-infinite edges of the form $(0,\infty)$ connected at a single common vertex at $\nu = 0$. The analyses have focused on the characterization of ground states and standing waves. These works pertain primarily to the nonlinear Schr\"{o}dinger (NLS) equation (see Adami \emph{et al.} \cite{AdaNoj12a,AdaNoj14,AdaNoj16}, Angulo and Goloschapova \cite{AngGol18b,AngGol18a} and Cacciapuoti \emph{et al.} \cite{CFN17}; see also Noja \cite{Noj14} for a recent review), albeit other nonlinear dispersive equations have been also studied, such as the Benjamin-Bona-Mahony (BBM) equation for unidirectional shallow fluid flow on a $\mathcal{Y}$-junction (see Bona and Cascaval \cite{BoCa08} and Mugnolo and Rault \cite{Mugnolobbm}), Airy-type equations (Mugnolo \emph{et al.} \cite{MNS}), nonlinear Klein-Gordon equations (Goloschapova \cite{Gol20}) or the Korteweg-de Vries (KdV) equation on general metric graphs (Angulo and Cavalcante \cite{AngCav}). All these model equations share one feature: the presence of solitary wave solutions (solitons). The analysis of existence, stability and the overall role of solitons for some PDE models on graphs, as well as the study of nonlinear equations on ramified structures, constitute a very active field of research due to its potential of becoming a paradigm model for topological effects of nonlinear wave propagation. The objective of this work is to contribute to this on-going effort through the analysis of the well-known sine-Gordon equation on a metric graph of $\mathcal{Y}$-junction type.

\subsection{The sine-Gordon equation on graphs}

The sine-Gordon equation in one space dimension,
\begin{equation}
\label{sine-G}
u_{tt} - c^2 u_{xx} + \sin u = 0,
\end{equation}
where $c > 0$ is a constant and $x \in \R$, $t > 0$, appears in many models in mathematical physics, such as the description of the magnetic flux in a Josephson line \cite{BEMS,BaPa82,SCR}, crystal dislocations \cite{FreKo}, mechanical oscillations of a nonlinear pendulum \cite{Dra83}, or even nonlinear oscillations in DNA chains \cite{IvIv13}, among other applications. It is a nonlinear wave equation underlying many important mathematical features, such as complete integrability \cite{AKNS73,AKNS74}, a Hamiltonian structure \cite{TaFa76} and the existence of localized solutions (solitons) \cite{SCM,Sco03}.

Posing the sine-Gordon equation on a metric graph comes out naturally from practical applications. For example, since the phase-difference in a long (infinite) Josephson junction obeys equation \eqref{sine-G}, the coupling of two or more Josephson junctions forming a network can be effectively modeled by the sine-Gordon model on a graph. The sine-Gordon equation was first conceived on a $\mathcal{Y}$-shaped Josephson junction by Nakajima \emph{et al.} \cite{NakO76,NakO78} as a prototype for logic circuits. The authors consider three long (semi-infinite) Josephson junctions coupled at one single common vertex, a structure known as a \emph{tricrystal junction}. There exists two main types of $\mathcal{Y}$-junctions. A $\mathcal{Y}$-junction of the first type (or type I) consists of one incoming (or parent) edge, $E_1 = (-\infty,0)$, meeting at one single vertex at the origin, $\nu = 0$, with other two outgoing (children) edges, $E_j = (0,\infty)$, $j = 2,3$. The second type (or $\mathcal{Y}$-junction of type II) resembles more a starred structure and consists of three identical edges of the form $E_j = (0,\infty)$, $1 \leqq j \leqq 3$. See Figure \ref{figYjunction} for an illustration. Junctions of type I are more common in unidirectional fluid flow models (see, for example, \cite{BoCa08}), whereas graphs of type I or II are indistinctively used to describe Josephson tricrystal junctions; see, for instance, \cite{Grunn93,Susa19} (type I), or \cite{SvG05,KCK00,Sabi18} (type II). In the present case of the sine-Gordon equation \eqref{sine-G}, the choice of a junction of either type makes no difference in the stability analysis.

\begin{figure}[t]
\begin{center}
\subfigure[$\mathcal{Y} = (-\infty,0) \cup (0,\infty) \cup (0,\infty)$]{\label{figYtipoI}\includegraphics[scale=.4, clip=true]{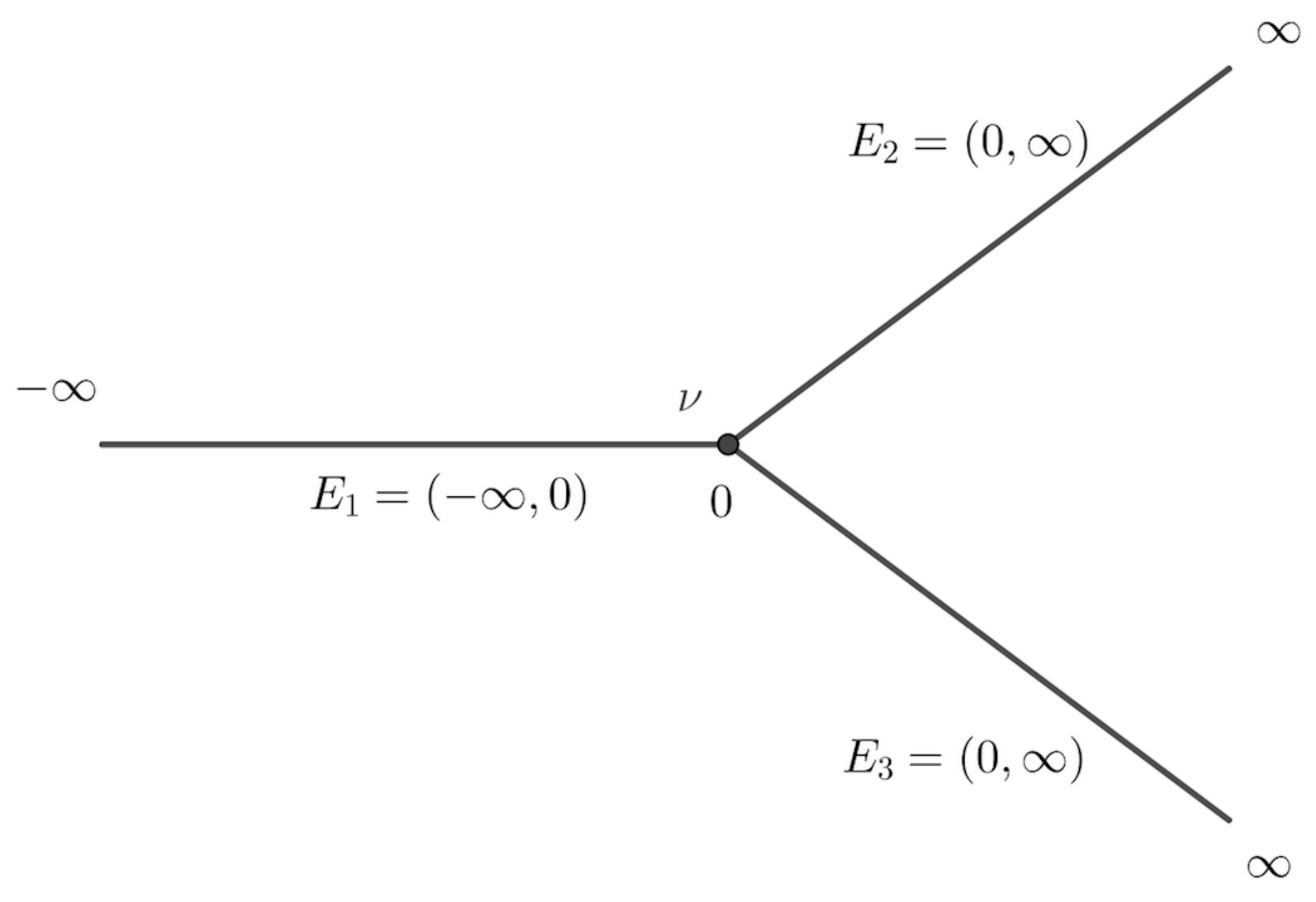}}
\subfigure[$\mathcal{Y} = (0,\infty) \cup (0,\infty) \cup (0,\infty)$]{\label{figYtipoII}\includegraphics[scale=.4, clip=true]{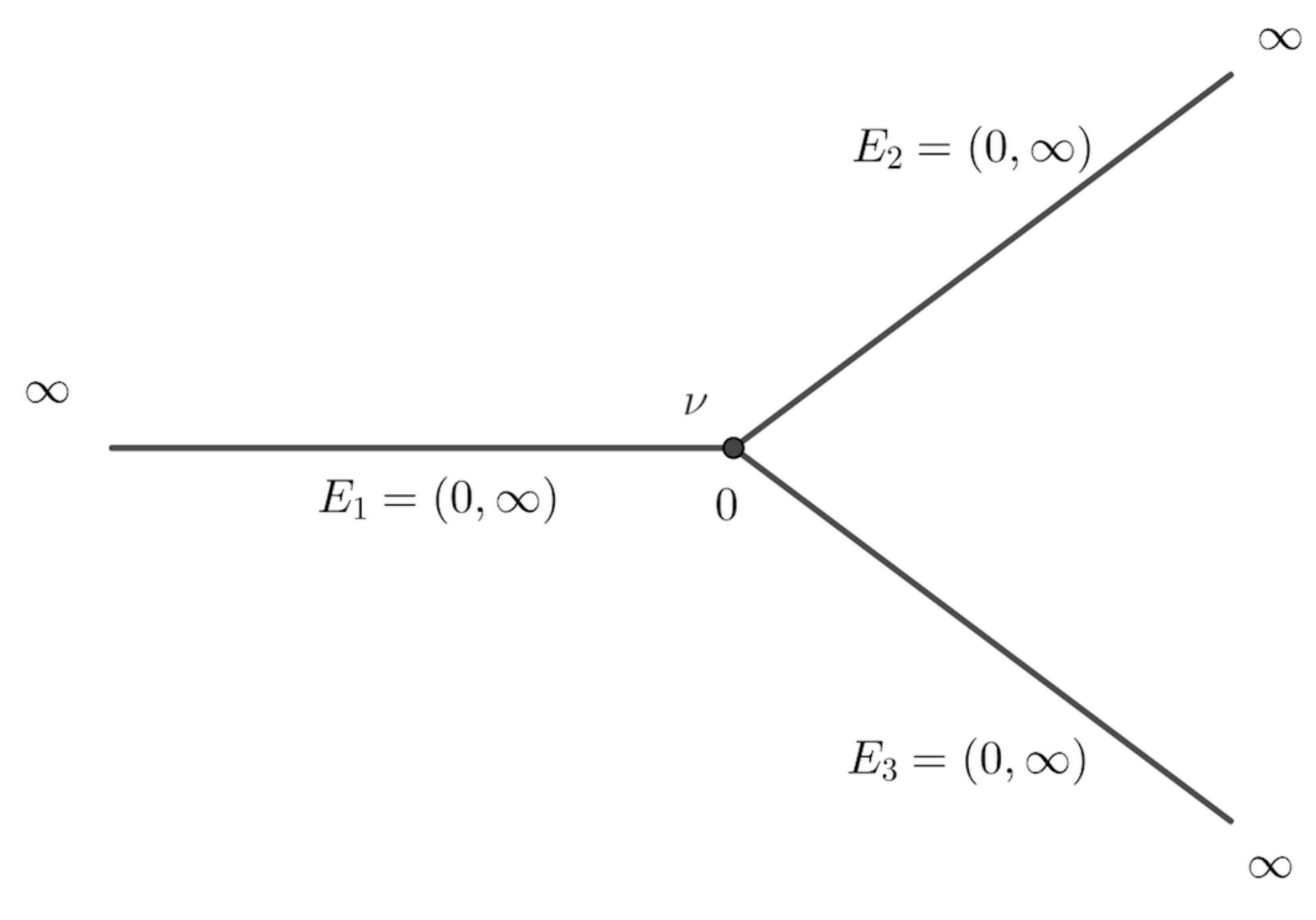}}
\end{center}
\caption{\small{Panel (a) shows a $\mathcal{Y}$-junction of the first type with $E_1 = (-\infty,0)$ and $E_j = (0,\infty)$, $j=2,3$, whereas panel (b) shows a $\mathcal{Y}$-junction of the second type (star graph) with $E_j = (0,\infty)$, $1 \leqq j \leqq 3$.}}\label{figYjunction}
\end{figure}

What is more crucial is the choice of boundary conditions, mainly because the transition rules at the vertex completely determine the dynamics of the PDE model on the graph. For the sine-Gordon equation in $\mathcal{Y}$-junctions, previous studies have basically (and almost exclusively) considered two types of boundary conditions: interactions of $\delta$-type, and of $\delta'$-type. The former refers to continuity of the wave functions and a balance flux relation for the derivatives of the wave functions at the vertex. The latter consists of continuity of the fluxes (derivatives) at the vertex (surface current density is the same in all three thin films at the intersection), and a Kirchhoff-type rule for the \emph{self-induced} magnetic flux. Since Josephson models arise in the description of electromagnetic flux, interactions of $\delta'$-type have received more attention (see, for example, \cite{Grunn93,KCK00,SvG05} for the description and analysis of stationary kink-type solutions and, more recently, \cite{Susa19} for solutions of the \emph{breather} type). In both cases ($\delta$- and $\delta'$-types), rigorous studies of the well-posedness of the model, as well as of the spectral and nonlinear stability properties of particular stationary solutions, are still under development (the case of interactions of $\delta'$-type at the vertex will be addressed in a companion paper \cite{AnPl}).

\subsection{Boundary conditions of $\delta$-interaction type on a $\mathcal Y$-junction}

In this paper, we focus our attention to boundary conditions of $\delta$-type. Previous works (see, e.g., \cite{Caput14,DuCa18,Sabi18}) have considered interactions consisting of two basic transition rules at the vertex. For concreteness, we describe them in the context of a $\mathcal{Y}$-junction of type I. The first boundary condition is the wave function continuity at the intersection point, namely,
\begin{equation}
\label{contbc}
u_1(0-) = u_2(0+) = u_3(0+).
\end{equation}
In the context of $\mathcal{Y}$-junctions for the sine-Gordon model, it was first proposed by Nakajima \emph{et al.} \cite{NakO76} (see equation (4) in that reference) to account for circuits with a \emph{trigger turning point}.
The second boundary condition reads,
\begin{equation}
\label{Kirchhoffbc}
-c_1^2 \partial_x u_1(0-) + \sum_{j=2}^3 c_j^2 \partial_x u_j(0+) = 0,
\end{equation}
and it is equivalent to the charge conservation property or conservation of the \emph{current flow} (the Kirchhoff law for electric currents in the case of a Josephson junction, for example) at the vertex. This boundary condition, adopted from previous studies for (linear) Klein-Gordon equations (see, e.g., \cite{BiP09,MeRe03}), is a continuous analogue of the celebrated Kirchhoff's circuit law in the sense that it somehow expresses a flux balance across the vertex. Dutykh and Caputo \cite{DuCa18} have shown that \eqref{Kirchhoffbc} can be justified by transforming the $\mathcal{Y}$-junction domain into a manifold $\mathcal{Y}_\varepsilon$ of small thickness $\varepsilon > 0$. In this fashion, $\mathcal{Y}_\varepsilon$ becomes a tubular neighborhood of the graph, often referred to in the literature as a \emph{fat graph} (see \cite{BlaExn08}, chapter 17). Henceforth, imposing Neumann boundary conditions on $\partial \mathcal{Y}_\varepsilon$ and taking the limit when $\varepsilon \to 0$ leads to \eqref{Kirchhoffbc}. Another approach adopted by the same authors in order to derive \eqref{Kirchhoffbc} is based on a conservation of energy argument (see \cite{DuCa18} for further information).

The sine-Gordon model on a $\mathcal{Y}$-junction with this type of boundary conditions of $\delta$-interaction type has been studied only by Caputo and Dutykh \cite{Caput14,DuCa18} and by Sabirov \emph{et al.} \cite{Sabi18}, up to our knowledge. In the latter reference, the authors consider the stationary sine-Gordon equation on a $\mathcal{Y}$-junction with \emph{finite} edges, $E_j = (0,L_j)$. They find exact analytical solutions under boundary conditions of both the $\delta$- and $\delta'$-interaction types. Caputo and Dutykh \cite{Caput14,DuCa18} formulate the sine-Gordon model on a $\mathcal{Y}$-junction under boundary conditions \eqref{contbc} and \eqref{Kirchhoffbc}, and implement a symplectic numerical scheme to solve it and, more precisely, to numerically study soliton collisions at the vertex. 

As far as we know, there is no analytical study of the stability of stationary solutions to the sine-Gordon model on a graph with boundary conditions of $\delta$-interaction type available in the literature. The stability of these static configurations is an important property from both the mathematical and the physical points of view. Stability can predict whether a particular state can be observed in experiments or not. Unstable configurations are rapidly dominated by dispersion, drift, or by other interactions depending on the dynamics, and they are practically undetectable in applications. In the analysis of their stability, it is customary to \emph{linearize} the equation around the profile solution and to obtain useful information from the spectral properties of the linearized operator posed on an appropriate function space. Upon linearization of the sine-Gordon equation \eqref{sine-G} around a stationary soliton solution, we end up with a Schr\"odinger type operator with a bounded potential (see the form of the operator \eqref{trav23} below) that can be appropriately defined on a graph. Therefore, motivated by the spectral analysis of the linearized model around the static profile solutions, we adopt a quantum-graph approach in order to justify, interpret and extend the boundary conditions that actually define the model.

According to custom in quantum graph theory \cite{Berko17,BlaExn08}, let us consider for simplicity the case of a star graph $\mathcal{G}$ constituted by $N$ semi-infinite edges of the form $E_j = (0,\infty)$, $1 \leqq j \leqq N$, attached at a single vertex at $\nu = 0$. A function on $\mathcal{G}$ is a vector $\mathbf{u} = (u_j)_{j=1}^N$, with scalar components, $u_j = u_j(x)$ on each edge $E_j$. Sobolev and Lebesgue spaces on $\mathcal{G}$ are defined as $H^m(\mathcal{G}) = \oplus_{j=1}^N H^m(0,\infty)$ and $L^p(\mathcal{G}) = \oplus_{j=1}^N L^p(0,\infty)$, respectively. Schr\"odinger operators on quantum graphs have the form
\[
\widetilde{\mathcal{L}} \mathbf{u} = \left\{ \Big( - \frac{d^2}{dx^2} + V_j(x) \Big) u_j \right\}_{j=1}^N,
\]
defined on $L^2(\mathcal{G})$ with a domain being a subset of $H^2(\mathcal{G})$. If the potentials $V_j$ are not too singular then the coupling at the vertex does not depend on them and the self-adjoint extensions of the Laplace operator determine all the self-adjoint extensions of $\widetilde{\mathcal{L}}$. It is known (see, e.g., \cite{BlaExn08}) that all self-adjoint extensions of the formal operator $-\Delta = \{ - u_j'' \}_{j=1}^N$ on the star graph are determined by vertex conditions having the form 
\[
(U-I) \mathbf{u}(0) + i(U+I) \mathbf{u}'(0) = 0,
\]
where $U$ is a unitary matrix. A self-adjoint extension is of $\delta$-\emph{interaction type} in the particular case where the matrix $U$ is given by $U_{jk} = 2(N+iZ)^{-1} - \delta_{j,k}$, for $1 \leqq j,k \leqq N$, being $\delta_{j,k}$ the Kronecker symbol and for an arbitrary parameter $Z \in \R$. Upon substitution into the last equation we obtain the so called \emph{$\delta$-boundary conditions at the vertex with intensity $Z$},
\[
\begin{aligned}
u_1(0) = u_2(0) = \ldots &= u_N(0), \\
\sum_{j=1}^n u'_j(0) &= Z u_1(0).
\end{aligned}
\]
In such a case, the self-adjoint extension is defined as the formal operator, $-\Delta_Z \equiv -\Delta$, on a domain, $D(-\Delta_Z)$, which is a subspace of $H^2(\mathcal{G})$ that includes the $\delta$-conditions at the vertex. The parameter value $Z$ is fundamental an determines the basic spectral properties of the operator. For example, it can be shown that $-\Delta_Z$ has non-empty point spectrum only when $Z < 0$, yielding the term ``attractive'' to characterize the $\delta$-vertex with $Z < 0$, in contrast with a ``repulsive'' vertex when $Z \geq 0$. The former can be interpreted as an attractive potential well at the vertex. When $Z = 0$ the $\delta$-condition at the vertex is said to be of Kirchhoff type. The quadratic form associated to $-\Delta_Z$ is
\[
Q[\mathbf{u}] = \frac{1}{2} \| \mathbf{u}' \|^2_{L^2(\mathcal{G})} + \frac{Z}{2} |u_1(0)|^2,
\]
with domain
\[
D(Q) = \left\{ \mathbf{u} \in H^1(\mathcal{G}) \, : \, u_1(0) = \ldots =u_N(0) \right\} =: \mathcal{E}(\mathcal{G}),
\]
independent of $Z$ and usually referred to as the \emph{energy domain} (see \cite{BlaExn08,Noj14}).

For the sine-Gordon model on a graph, keeping the characteristic velocity on each edge is important. Thus, we are concerned with all the self-adjoint extensions of the formal operator 
\[
\mathcal{F} \mathbf{u} = \left\{ \Big( -c_j^2 \frac{d^2}{dx^2} \Big) u_j \right\}_{j=1}^N, \qquad \mathbf{u} = (u_j)_{j=1}^N,
\]
on a star graph $\mathcal{G}$. It is not hard to verify that self-adjoint extensions correspond to an interaction of $\delta$-type only when $U_{jk} = 2c_k^2(N+iZ)^{-1} - \delta_{j,k}$, even though in this case the matrix $U$ is no longer unitary. For convenience of the reader, we provide a direct proof of this fact in Appendix \S \ref{secApp} for the particular case of a $\mathcal{Y}$-junction of type I, whereupon substitution of $U$ yields the transition conditions 
\begin{equation}
\label{bcI}
\begin{aligned}
u_1(0-) = u_2(0+) &= u_3(0+),\\
-c_1^2  u'_1(0-) + \sum_{j=2}^3 c_j^2  u'_j(0+) &= Z u_1(0-),
\end{aligned}
\end{equation}
recovering in this fashion the continuity condition \eqref{contbc} and the Kirchhoff boundary condition \eqref{Kirchhoffbc} when $Z = 0$.

Therefore, the main goal of this paper is to analyze the structural and stability properties of stationary solutions to the sine-Gordon model defined on a $\mathcal{Y}$-junction under boundary conditions of $\delta$-interaction type of the form \eqref{bcI} at the vertex. These conditions depend upon the parameter $Z$, which ranges along the whole real line and determines the dynamics of the solutions. Therefore, the value $Z \in \R$ is part of the physical parameters that define the physical model (such as the speeds $c_j$, for instance). Instead of adopting \emph{ad hoc} boundary conditions, we consider a parametrized family of transition rules covering a wide range of applications and which, for the particular value $Z = 0$, include the Kirchhoff condition \eqref{Kirchhoffbc} previously studied in the literature. Our analysis focuses on a particular class of solutions of the sine-Gordon equation known as \emph{kinks} (also referred to as topological solitons \cite{Dra83,Sco03,SCM}). For completeness, we also include the stability analysis for static configurations of kink/anti-kink type.

\subsection{Main results}

In this paper we consider the sine-Gordon equation \eqref{sine-G} on a metric graph with the shape of a $\mathcal{Y}$-junction with three semi-infinite edges and joined by a single vertex $\nu = 0$. For concreteness, in the sequel we assume that the $\mathcal{Y}$-junction is of type I, where $E_1 = (-\infty,0)$ and $E_j = (0,\infty)$, $j = 2,3$. The results and observations of this paper can be easily extended to the case of a $\mathcal{Y}$-junction of type II at the expense of extra bookkeeping. The sine-Gordon model on the $\mathcal{Y}$-junction under consideration reads
\begin{equation}
\label{sg1}
\partial_t^2 u_j - c_j^2 \partial_x^2 u_j + \sin u_j= 0, \qquad x \in E_j, \;\,  t > 0, \;\, 1 \leqq j \leqq 3,
\end{equation}
where $\mathbf{u} = (u)_{j=1}^3$, $u_j = u_j(x,t)$. It is assumed that the characteristic speed on each edge $E_j$ is constant and positive, $c_j > 0$, without loss of generality. Clearly, one can recast the equations in \eqref{sg1} as a first order system that reads
\begin{equation}
\label{sg2}
\begin{cases}
\partial_t u_j = v_j\\
\partial_t v_j =c_j^2 \partial_x^2 u_j - \sin u_j,
\end{cases}
\qquad x \in E_j, \;\,  t > 0, \;\, 1 \leqq j \leqq 3.
\end{equation}
The equations are endowed with boundary conditions of $\delta$-type, having the the form \eqref{bcI} for all $t > 0$ and for a given parameter $Z \in \R$. 

We are interested in the dynamics generated by the flow of the sine-Gordon model \eqref{sg1} around solutions of stationary type,
\begin{equation}\label{trav20}
u_j (x,t) = \phi_j(x), \qquad v_j(x,t) = 0,
\end{equation}
for all $j = 1,2,3$, and $x \in E_j$, $t > 0$, where each of the profile functions $\phi_j$ satisfies the equation
\begin{equation}
\label{trav21}
-c_j^2 \phi''_j + \sin \phi_j=0, 
\end{equation}
on each edge $E_j$ and for all $j$, as well as the boundary conditions \eqref{bcI} at the vertex $\nu = 0$. More precisely, we consider the particular family of profiles determined by the well-known kink-soliton profile solutions to the sine-Gordon equation on the full real line \cite{Dra83,SCM}, having the form
\begin{equation}
\label{trav22}
\begin{cases}
\phi_1(x) = 4 \arctan \big( e^{(x-a_1)/c_1}\big), & x \in (-\infty,0),\\
\phi_j(x) = 4 \arctan \big( e^{-(x-a_j)/c_j}\big), & x \in (0,\infty), \,\; j=2,3,\\
\end{cases}
\end{equation}
where each $a_j$ is a constant determined by the boundary conditions \eqref{bcI}. Notice as well that this family of stationary solutions \eqref{trav22} satisfies
\begin{equation}
\label{bcinfty}
\phi_1(-\infty) = \phi_j(+\infty) = 0, \qquad j = 2,3
\end{equation}
(in other words, the constant of integration when solving \eqref{trav21} to arrive at \eqref{trav22} is zero on each edge $E_j$). This decaying behavior at $\pm \infty$, for instance, guarantees that $\Phi = (\phi_j)_{j=1}^3 \in H^2(\mathcal{Y})$.

In the forthcoming stability analysis, the family of linearized operators around the stationary profiles plays a fundamental role. These operators are characterized by the following formal self-adjoint diagonal matrix operators,
\begin{equation}\label{trav23}
\mathcal{L} \bold{v}=\Big (\Big(-c_j^2\frac{d^2}{dx^2}v_j + \cos (\phi_j)v_j
\Big)\delta_{j,k} \Big ),\quad \l1\leqq j, k\leqq 3,\;\;\bold{v}= (v_j)_{j=1}^3,
\end{equation}
where $\delta_{j,k}$ denotes the Kronecker symbol, and defined on domains with $\delta$-type interaction at the vertex $\nu = 0$,
\begin{equation}\label{2trav23}
D(\mathcal{L}_Z)= \Big \{\bold{v}=(v_j)_{j=1}^3\in H^2(\mathcal{Y}):
v_1(0-)=v_2(0+)=v_3(0+),\;\; \sum\limits_{j=2}^3 c_j^2v_j'(0+)-c_1^2v_1'(0-)=Zv_1(0-) \Big \},
\end{equation}
with $Z \in \R$. The operator is completely determined, $(\mathcal{L}_Z, D(\mathcal{L}_Z))$, $\mathcal{L}_Z \equiv \mathcal{L}$, by each parameter value $Z \in \R$. It is to be observed that the particular family \eqref{trav22} of kink-profile stationary solutions under consideration is such that $\Phi = (\phi_j)_{j=1}^3 \in D(\mathcal{L}_Z)$.

Motivated by physical considerations, we also study static solutions to \eqref{sg1} on a $\mathcal{Y}$-junction of \emph{anti-kink} type, which represent waves pinned at the vertex and are of interest in the studies of impurities of the medium (modeled, in this case, by the vertex itself; see, e.g., \cite{SCM} for an interpretation of such impurities in the real line). Hence, we also consider one anti-kink on the parent edge, $E_1 = (-\infty,0)$, coupled with two kinks on the remaining edges, $E_j = (0,\infty)$, $j=2,3$. Such static solutions of kink/anti-kink type have the form
\begin{equation}
\label{trav-akink}
\begin{cases}
\phi_1(x) = 4 \arctan \big( e^{-(x-a_1)/c_1}\big), & x \in (-\infty,0),\\
\phi_j(x) = 4 \arctan \big( e^{-(x-a_j)/c_j}\big), & x \in (0,\infty), \,\; j=2,3.\\
\end{cases}
\end{equation}
Notice that, in this case,
\[
\lim_{x\to -\infty}\phi_1(x)=2\pi, \qquad \lim_{x\to +\infty}\phi_j(x)=0, \; \; \; j =2,3,
\]
and therefore the configuration $\Phi = (\phi_j)_{j=1}^3$ \emph{is not in} $H^2(\mathcal{Y})$. Nonetheless, it is possible to linearize the equation around this static solution and to define a suitable linear operator on the same domain in the energy space endowed with the $\delta$-coupling at the vertex. The spectral analysis of this operator can be performed in a similar fashion.

\bigskip

Let us summarize the main contributions of this paper:
\begin{itemize}
\item[$-$] First, we prove that the Cauchy problem associated to \eqref{sg2} is well-posed in the energy space $\mathcal{E}(\mathcal{Y}) \times L^2(\mathcal{Y})$ (section \S \ref{seclocalWP}), where 
\[
\mathcal E(\mathcal Y) = \{(v_j)_{j=1}^3\in H^1(\mathcal{Y}):
v_1(0-)=v_2(0+)=v_3(0+) \}.
\]
This is the content of Theorem \ref{well0}. Even though the well-posedness result is not part of the spectral stability analysis, it is fundamental to reach a nonlinear conclusion; see Remark \ref{remnolineal} below.
\item[$-$] In section \S \ref{seccriterium} we establish a general instability criterion for stationary solutions for the sine-Gordon model \eqref{sg2} on a $\mathcal{Y}$-junction. The reader can find this result in Theorem \ref{crit} below. It essentially provides sufficient conditions on the flow of the semigroup generated by the linearization around the stationary solutions, for the existence of a pair of positive/negative real eigenvalues of the linearized operator based on its Morse index. It is to be observed that this instability criterion applies to any type of stationary solutions (such as kinks or breathers, for example) and for other self-adjoint extensions characterized by different interactions at the vertex, such as the $\delta'$-type (see \cite{AnPl}), making it potentially useful in applications.
\item[$-$] We provide a complete characterization of the stationary kink-profile solutions \eqref{trav22} in terms of the parameter $Z \in \R$. It is shown that the family $\Phi_Z = (\phi_j)_{j=1}^3$ belongs to the domain space $D(\mathcal{L}_Z)$ only for parameter values $Z \in (-\sum_{j=1}^3 c_j, 0)$ (section \S \ref{secprofiles}). This implies, in turn, that there does not exist a kink-profile solution of the form \eqref{trav22} (and hence, satisfying the boundary condition at $\pm \infty$, \eqref{bcinfty}), compatible with the Kirchhoff condition \eqref{Kirchhoffbc} with $Z = 0$.
\item[$-$] We show (see section \S \ref{secspecstudy}) that the family of stationary kink-profiles, $Z \mapsto \Phi_Z$, are spectrally unstable under the flow of the sine-Gordon model for each $Z \in (-\sum_{j=1}^3 c_j, 0)$. This is the content of the main Theorem \ref{1main} below. The proof is divided into different steps (see Propositions \ref{main3} and \ref{main4}), in order to show that the Morse index of the linearization is exactly equal to one for the different parameter values of $Z$ under consideration. As we mentioned above, this result implies the nonlinear (orbital) instability of the kink-profiles (see Remark \ref{remnolineal}).
\item[$-$] Due to their importance in applications and in order to illustrate the range of applicability of the linear instability criterion developed here, we also establish the spectral instability of the kink/anti-kink profiles of the form \eqref{trav-akink} (see section \S \ref{aksect}). In view that the latter do not belong to the energy space, in section \ref{akfunspa} we verify the hypotheses of our linear instability criterion with respect to the flow generated by \emph{finite energy perturbations} of the static solutions (for a similar analysis on the semigroup generated by perturbations of unbounded subluminal rotations for the sine-Gordon model, see \cite{AnPl16}). Whenceforth, the spectral analysis follows similarly as in the previous kink configuration, yielding the spectral instability result for the kink/anti-kink profiles as well (see Theorem \ref{akmain}).
\item[$-$] For completeness, in Appendix \S \ref{secApp} we prove that all self-adjoint extensions of the formal operator \eqref{trav23} on a $\mathcal{Y}$-junction are defined on domains of the form \eqref{2trav23}.
\end{itemize}

\subsection*{On notation}

Let $A$ be a  closed densely defined symmetric operator in a Hilbert space $H$. The domain of $A$ is denoted by $D(A)$. The deficiency indices of $A$ are denoted by  $n_\pm(A):=\dim  \ker (A^*\mp iI)$, with $A^*$ denoting the adjoint operator of $A$.  The number of negative eigenvalues counting multiplicities (or Morse index) of $A$ is denoted by  $n(A)$. For any $-\infty\leq a<b\leq\infty$, we denote by $L^2(a,b)$ the Hilbert space equipped with the inner product 
\[
(u,v)=\int_a^b u(x)\overline{v(x)}dx. 
\]
By $H^n(a,b)$  we denote the classical  Sobolev spaces on $(a,b)\subseteq \mathbb R$ with the usual norm.   We denote by $\mathcal{Y}$ the junction parametrized by the edges $E_1 = (-\infty,0)$, $E_j = (0,\infty)$, $j =2,3$,  attached to a common vertex $\nu=0$. On the graph $\mathcal{Y}$ we define the classical spaces 
  \begin{equation*}
  L^p(\mathcal{Y})=L^p(-\infty, 0)  \oplus L^p(0, +\infty) \oplus L^p(0, +\infty), \quad \,p>1,
  \end{equation*}   
 and 
  \begin{equation*}  
 \quad H^m(\mathcal{Y})=H^m(-\infty, 0) \oplus H^m(0, +\infty)  \oplus H^m(0, +\infty), 
 \end{equation*}   
with the natural norms. Also, for $\mathbf{u}= (u_j)_{j=1}^3$, $\mathbf{v}= (v_j)_{j=1}^3 \in L^2(\mathcal{Y})$, the inner product is defined by
$$
\langle \mathbf{u}, \mathbf{v}\rangle= \int_{-\infty}^0 u_1(x) \overline{v_1(x)} \, dx  + \sum_{j=2}^3 \int_0^{\infty}  u_j(x) \overline{v_j(x)} \, dx
$$
Depending on the context we will use the following notations for different objects. By $\|\cdot \|$ we denote  the norm in $L^2(\mathbb{R})$ or in $L^2(\mathcal{Y})$. By $\| \cdot\| _p$ we denote  the norm in $L^p(\mathbb{R})$ or in $L^p(\mathcal{Y})$.  For the case of $\mathcal{Y}$ being a junction of type II with $\mathcal{Y}=(0, +\infty)\cup(0, +\infty)\cup(0, +\infty)$, similar definitions as above can be  given.

 \section{Local well-posedness theory for the sine-Gordon model in $\mathcal E(\mathcal Y)\times L^2(\mathcal{Y})$}
\label{seclocalWP}

In this section we study the local well-posedness problem associated to \eqref{sg2} initially with a specific framework.  We recast system \eqref{sg2} in the vectorial form 
  \begin{equation}\label{stat1}
  \bold w_t=JE\bold w +F(\bold w)
 \end{equation} 
  where $\bold w=(u, v)^\top$, with $u=(u_1, u_2, u_3)$, $v=(v_1, v_2, v_3)$, $u_1, v_1: (-\infty, 0)\to \mathbb R$, $u_j, v_j: (0, +\infty)\to \mathbb R$, $j=2,3$,
 \begin{equation}\label{stat2} 
J=\left(\begin{array}{cc} 0 & I_3 \\ -I_3  & 0 \end{array}\right),\quad E=\left(\begin{array}{cc} \mathcal F& 0 \\0 & I_3\end{array}\right), \quad 
F(\bold w)=\left(\begin{array}{cc}  0\\  0   \\  0   \\  -\sin (u_1)   \\  -\sin (u_2)   \\  -\sin (u_3)  \end{array}\right)
\end{equation} 
 where $I_3$ denotes the identity matrix of order $3$ and $ \mathcal F$ the diagonal-matrix linear operator
 \begin{equation*} 
 \mathcal{F}=\Big (\Big(-c_j^2\frac{d^2}{dx^2}
\Big)\delta_{j,k} \Big ),\quad\l1\leqq j, k\leqq 3.
   \end{equation*}

Here we will consider the operator $\mathcal F_Z\equiv\mathcal F$ defined on the $\delta$-interaction domain $D(\mathcal F_Z)$ 
\begin{equation}\label{domainZ}
D(\mathcal{F}_Z)= \Big \{\bold{v}=(v_j)_{j=1}^3\in H^2(\mathcal{Y}):
v_1(0-)=v_2(0+)=v_3(0+),\;\; \sum\limits_{j=2}^3 c_j^2v_j'(0+)-c_1^2v_1'(0-)=Zv_1(0-) \Big \}.
\end{equation}
Thus, the natural space to looking for a local well-posedness theory for \eqref{stat1} will be the space $\mathcal E(\mathcal Y)\times L^2(\mathcal Y)$ where $\mathcal E(\mathcal Y)$ represents the closed (continuous) subspace at zero of $H^1(\mathcal{Y})$,
\begin{equation}\label {E}
\mathcal E(\mathcal Y)=\{(v_j)_{j=1}^3\in H^1(\mathcal{Y}):
v_1(0-)=v_2(0+)=v_3(0+) \}.
\end{equation}

The analysis  of the initial value problem for the sine-Gordon vectorial model \eqref{stat1} on metric star shaped graphs requires new tools to those usually used in the case of the model on spaces of $\mathbb R^n$-type.

We start our analysis by establishing the spectrum properties of the family of self-adjoint operator $(\mathcal F_Z, D(\mathcal F_Z))$. 

\begin{theorem}\label{spectrum}
Let $Z\in \mathbb R$. Then  the essential spectrum of $(\mathcal F_Z, D(\mathcal F_Z))$ is purely absolutely continuous and $\sigma_{\mathrm{ess}}(\mathcal F_Z)=\sigma_{\mathrm{ac}}(\mathcal F_Z)=[0,+\infty)$. If $ Z<0$, $\mathcal F_Z$ has precisely one negative, simple eigenvalue, {\it i.e.} its point spectrum $\sigma_{\mathrm{pt}}(\mathcal F_Z)$ is 
$$
\sigma_{\mathrm{pt}}(\mathcal F_Z)=\Big \{-\frac{Z^2}{(\sum_{j=1}^3 c_j )^2}\Big\},
$$ 
with $\Phi_Z=(e^{\alpha x}, e^{-\alpha x}, e^{-\alpha x})$ its  ``strictly positive'' eigenfunction and $\alpha=-Z/\sum_{j=1}^3 c_j >0 $. If $ Z\geqq 0$, $\mathcal F_Z$ has no eigenvalues, $\sigma_{\mathrm{pt}}(\mathcal F_Z)= \varnothing $.
\end{theorem}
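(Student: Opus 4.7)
The plan is to treat the essential spectrum and the point spectrum separately, using a resolvent-comparison argument for the former and an explicit ODE computation for the latter.

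For the essential spectrum, I would introduce a convenient reference operator obtained by decoupling the three edges at the vertex via Dirichlet conditions, namely $\mathcal{F}_D = \bigoplus_{j=1}^3 (-c_j^2 d^2/dx^2)$ on the domain $\{ \mathbf v \in H^2(\mathcal{Y}) : v_1(0-)=v_2(0+)=v_3(0+)=0 \}$. Each summand is unitarily equivalent to the Dirichlet Laplacian on a half-line (rescaled by $c_j$), hence $\mathcal{F}_D$ has purely absolutely continuous spectrum equal to $[0,\infty)$. Both $\mathcal{F}_Z$ and $\mathcal{F}_D$ are self-adjoint extensions of the common minimal symmetric operator on $\mathcal{Y}$, which has finite deficiency indices (at most $(3,3)$). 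By Krein's resolvent formula, the difference $(\mathcal{F}_Z - z)^{-1} - (\mathcal{F}_D - z)^{-1}$ has finite rank for every $z \in \mathbb C \setminus \mathbb R$. Weyl's theorem on the invariance of the essential spectrum then yields $\sigma_{\mathrm{ess}}(\mathcal{F}_Z) = \sigma_{\mathrm{ess}}(\mathcal{F}_D) = [0,\infty)$, and the Kato--Rosenblum theorem (finite-rank resolvent differences are trace class) gives $\sigma_{\mathrm{ac}}(\mathcal{F}_Z) = [0,\infty)$, together with the absence of a singular continuous component by standard arguments for second-order operators on the half-line with compactly supported perturbation at the vertex.

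For the point spectrum, I would solve the eigenvalue equation $-c_j^2 v_j'' = \lambda v_j$ on each edge. For $\lambda > 0$, all solutions are bounded oscillatory functions and none lies in $L^2$; for $\lambda = 0$, the solutions are affine, and the $L^2$ condition forces them to vanish. For $\lambda = -\mu^2 < 0$, the only $L^2$ solutions are $v_1(x) = A_1 e^{\mu x/c_1}$ on $E_1$ and $v_j(x) = A_j e^{-\mu x/c_j}$ on $E_j$, $j=2,3$. Continuity at the vertex forces $A_1=A_2=A_3 =: A$, and substituting into the flux-balance condition in \eqref{domainZ} gives
$$ -A\mu\bigl(c_1+c_2+c_3\bigr) = Z A, $$
so a nontrivial solution exists if and only if $\mu = -Z/\sum_{j=1}^3 c_j > 0$, i.e.\ if and only if $Z < 0$. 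In that case the unique negative eigenvalue is $\lambda = -Z^2/(\sum_{j=1}^3 c_j)^2$, it is simple (one-dimensional kernel), and the eigenfunction is strictly positive on each edge, matching the form stated in the theorem (with the appropriate edge-dependent exponential rates $\mu/c_j$ understood in the notation $\alpha$). If $Z \geqq 0$, the equation $\mu(c_1+c_2+c_3) = -Z$ has no positive solution, so $\sigma_{\mathrm{pt}}(\mathcal{F}_Z) = \varnothing$.

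The main obstacle will be the rigorous verification that the resolvent difference $(\mathcal{F}_Z - z)^{-1} - (\mathcal{F}_D-z)^{-1}$ is of finite rank and the consequent absolute continuity of $\sigma_{\mathrm{ess}}$; this requires invoking the extension theory for symmetric operators with finite deficiency indices together with Krein's formula, which is standard but must be adapted to the non-unitary coupling matrix $U_{jk} = 2c_k^2(3+iZ)^{-1}-\delta_{jk}$ encoding the characteristic velocities. An alternative, also valid, would be to use the min-max/quadratic-form comparison with $\mathcal{F}_0$ (Kirchhoff case) and deduce both the invariance of $\sigma_{\mathrm{ess}}$ and the absence of embedded eigenvalues from the explicit ODE analysis above.
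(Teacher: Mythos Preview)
Your proposal is correct and follows the same overall architecture as the paper's proof, but the two differ in the specific tools invoked at each stage.

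For the essential spectrum, both you and the paper compare $\mathcal{F}_Z$ to the decoupled Dirichlet operator $\mathcal{F}_D$. The paper proceeds by showing $\sigma_{\mathrm{ess}}(\mathcal{F}_{\mathrm{Dir}})=[0,\infty)$ via Weyl sequences and then invokes Naimark's theorem (Proposition~\ref{esse}) that all self-adjoint extensions of a symmetric operator with equal finite deficiency indices share the same continuous spectrum. You instead use Krein's resolvent formula to get a finite-rank resolvent difference and then apply Weyl's theorem and Kato--Rosenblum. These are essentially two packagings of the same mechanism; your route is perhaps more explicit about why absolute continuity transfers, while the paper's is shorter once Proposition~\ref{esse} is on record.

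For the point spectrum, the paper argues indirectly: it bounds $n(\mathcal{F}_Z)\leqq 1$ via extension theory (Propositions~\ref{M} and~\ref{semibounded}), then for $Z<0$ simply exhibits the eigenpair, and for $Z\geqq 0$ shows $\mathcal{F}_Z\geqq 0$ from the quadratic form. Your direct ODE computation is more elementary and has the advantage of transparently ruling out eigenvalues at $\lambda\geqq 0$ as well (a point the paper treats only implicitly). You also correctly flag that the exponential rates in the eigenfunction should be $\alpha/c_j$ on each edge, consistent with the paper's own Remark~\ref{Zposi}; the statement of the theorem suppresses this dependence.
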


\begin{proof} By convenience of the reader, we present the main steps of the proof:
\begin{enumerate}
\item[1)] For every $Z$, the Morse index of $\mathcal F_Z$, $n(\mathcal F_Z)$,  satisfies $n(\mathcal F_Z)\leqq 1$: Indeed, from Proposition \ref{M} we have  immediately that the symmetric operator $(\mathcal M, D(\mathcal M))$ defined in \eqref{M1} (see Appendix \ref{secApp}) is non-negative, $\langle \mathcal M \bold{v},\bold v\rangle\geqq 0$ for every $\bold v \in D(\mathcal M)$, and the deficiency indices are $n_{\pm}(\mathcal M)=1$. Therefore, from Proposition \ref{semibounded} follows that $n(\mathcal F_Z)\leqq 1$. 

\item[2)] For $Z>0$, $n(\mathcal F_Z)=0$: Indeed,  for every $ \bold{v}=(v_j)_{j=1}^3\in D(\mathcal F_Z)$ we have 
$$
\langle \mathcal F_Z \bold{v},\bold v\rangle=\int_{-\infty}^0 c_1^2(v_1)^2 dx +\sum_{j=2}^3 \int_0^{\infty} c_j^2(v_j')^2 dx +Z|v_1(0)|^2\geqq 0,
$$
thus, since $\mathcal F_Z$ is a self-adjoint operator and non-negative we need to have $\sigma(\mathcal F_Z)\subset [0, +\infty)$. Therefore, $n(\mathcal F_Z)=0$.

\item[3)] For $Z<0$, $n(\mathcal F_Z)=1$: From the definition of $\Phi_Z$ above we have immediately that $\Phi_Z\in D(\mathcal F_Z)$ and $\mathcal F_Z\Phi_Z=-\alpha^2\Phi_Z$. Therefore, from item 1) follows that $n(\mathcal F_Z)=1$.

\item[4)] From classical Schr\"odinger theory on the half-line $(0, +\infty)$ for the operator $A=-c^2\frac{d^2}{dx^2}$, with $c > 0$ and Dirichlet-domain $D_{\mathrm{Dir}}=\{f\in H^2(0,+\infty): f(0)=0\}$ we  have that $\sigma_{\mathrm{ess}}(-c\frac{d^2}{dx^2})=\sigma(-c\frac{d^2}{dx^2})=[0, +\infty)$. Thus by Weyl's criterion (see Reed and Simon \cite{RS4}) for all $\lambda\geqq 0$, there exists a sequence $\{\psi_n\}\subset D_{\mathrm{Dir}}$ orthogonal in $L^2(0,+\infty)$ with $\|\psi_n\|_{L^2(0,+\infty)}=1$ such that $\|(A-\lambda I)\psi_n\|_{L^2(0,+\infty)}\to 0$ as $n\to +\infty$.

Next, we note that the self-adjoint operator $\mathcal F_{\mathrm{Dir}}$ with homogeneous Dirichlet boundary conditions
$$
D(\mathcal F_{\mathrm{Dir}})=\{\bold v \in H^2(\mathcal Y): v_1(0-)=v_2(0+)=v_3(0+)=0\},
$$
belongs to the family of self-adjoint extensions of $(\mathcal M, D(\mathcal M))$ (see Proposition \ref{M}). Since $(\mathcal F_{\mathrm{Dir}}, D(\mathcal F_{\mathrm{Dir}}))$ posseses no point spectrum and it is positive definite we  need to have $\sigma_{\mathrm{ess}}(\mathcal F_{\mathrm{Dir}})=\sigma(\mathcal F_{\mathrm{Dir}})\subset[0, +\infty)$. Now, we see that $[0, +\infty)\subset \sigma_{\mathrm{ess}}(\mathcal F_{\mathrm{Dir}})$. Indeed, for $\lambda\geqq 0$ we have that the sequence $\Psi_n=(0, \psi_n, 0)$ satisfies $\{\Psi_n\}\subset D_{\mathrm{Dir}}$, orthogonal in $L^2(\mathcal Y)$, $\|\Psi_n\|_{L^2(\mathcal Y)}=1$ and 
$$
\|(\mathcal F_{\mathrm{Dir}}\Psi_n-\lambda I)\Psi_n\|^2_{L^2(\mathcal Y)}=\|(A-\lambda I)\psi_n\|^2_{L^2(0,+\infty)}\to 0,\;\; as\;\; n\to +\infty,
$$
therefore by the Weyl's criterion follows that $\lambda\in \sigma_{\mathrm{ess}}(\mathcal F_{\mathrm{Dir}})$. Then, by Proposition \ref{esse} we get that all self-adjoint extensions for $(\mathcal M, D(\mathcal M))$ have continuous spectrum being $[0,+\infty)$. Now, since the self-adjoint operators $(\mathcal F_Z, D(\mathcal F_Z))$ may have at most a finite collection of negative eigenvalues, its continuous spectrum, $\sigma_{\mathrm{ac}}(\mathcal F_Z)$, coincides with its essential spectrum $\sigma_{\mathrm{ess}}(\mathcal F_Z)$ and $\sigma(\mathcal F_Z)=\sigma_{\mathrm{ac}}(\mathcal F_Z)\cup \sigma_{\mathrm{pt}}(\mathcal F_Z)$. This finishes the proof.
\end{enumerate}
\end{proof}

The following characterization of the resolvent of the operator $\mathcal A= JE$ will be sufficient for our study here. The results in Theorem \ref{spectrum} will be the main points in the analysis.

\begin{theorem}\label{resol}
Let $Z\in \mathbb R$. For $\lambda\in \mathbb C$ with $-\lambda^2\in \rho(\mathcal F_Z)$, we have that $\lambda$ belongs to the resolvent set of $\mathcal A=JE$ with $D(\mathcal A)=D_{\delta, Z}\times L^2(\mathcal Y)$ and $R(\lambda:\mathcal A)=(\lambda I-\mathcal A)^{-1}: H^1(\mathcal Y)\times L^2(\mathcal Y)\to D(\mathcal A)$ has the representation for $\Psi=(\bold u, \bold v)$
\begin{equation}\label{resolA}
R(\lambda:\mathcal A)\Psi=\left(\begin{array}{c} -R(-\lambda^2: \mathcal F_Z)(\lambda \bold u+\bold v) \\
-\lambda R(-\lambda^2: \mathcal F_Z)(\lambda \bold u+\bold v) -\bold u \end{array}\right),
\end{equation}
where  $R(-\lambda^2: \mathcal F_Z)=(-\lambda^2I_3- \mathcal F_Z)^{-1}: L^2(\mathcal Y)\to D(\mathcal F_Z)$.
\end{theorem}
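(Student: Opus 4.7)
The strategy is simply to invert the $2\times 2$ block system $(\lambda I - \mathcal{A})\Phi = \Psi$ by Gaussian elimination, thereby reducing the question of whether $\lambda$ lies in the resolvent set of $\mathcal{A}$ to the scalar question of whether $-\lambda^2$ lies in the resolvent set of $\mathcal{F}_Z$, which is exactly our hypothesis and has already been characterized in Theorem \ref{spectrum}.

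Concretely, write $\Phi=(\mathbf{p},\mathbf{q})^\top$ and $\Psi=(\mathbf{u},\mathbf{v})^\top$. From the block form of $\mathcal{A}=JE$ one reads off $\mathcal{A}\Phi=(\mathbf{q},-\mathcal{F}_Z\mathbf{p})^\top$, so the equation $(\lambda I-\mathcal{A})\Phi=\Psi$ is equivalent to the pair
\[
\lambda\mathbf{p}-\mathbf{q}=\mathbf{u}, \qquad \mathcal{F}_Z\mathbf{p}+\lambda\mathbf{q}=\mathbf{v}.
\]
The plan is to eliminate $\mathbf{q}$ via the first equation, $\mathbf{q}=\lambda\mathbf{p}-\mathbf{u}$, and substitute into the second, obtaining the scalar equation $(\mathcal{F}_Z+\lambda^2 I)\mathbf{p}=\lambda\mathbf{u}+\mathbf{v}$. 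Since by hypothesis $-\lambda^2\in\rho(\mathcal{F}_Z)$, Theorem \ref{spectrum} (together with the self-adjointness furnished by Proposition \ref{M}) guarantees the existence of the bounded inverse $R(-\lambda^2:\mathcal{F}_Z):L^2(\mathcal{Y})\to D(\mathcal{F}_Z)$, whence
\[
\mathbf{p}=-R(-\lambda^2:\mathcal{F}_Z)(\lambda\mathbf{u}+\mathbf{v}), \qquad \mathbf{q}=-\lambda R(-\lambda^2:\mathcal{F}_Z)(\lambda\mathbf{u}+\mathbf{v})-\mathbf{u}.
\]

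The remaining checks are routine verifications. Observing that $\lambda\mathbf{u}+\mathbf{v}\in L^2(\mathcal{Y})$ whenever $(\mathbf{u},\mathbf{v})\in H^1(\mathcal{Y})\times L^2(\mathcal{Y})$, the scalar resolvent delivers $\mathbf{p}\in D(\mathcal{F}_Z)\subset H^2(\mathcal{Y})$; in particular $\mathbf{p}$ automatically satisfies the $\delta$-coupling at the vertex. Then $\mathbf{q}=\lambda\mathbf{p}-\mathbf{u}\in L^2(\mathcal{Y})$, placing $\Phi$ in $D(\mathcal{A})$. A direct substitution recovers the system above and hence $(\lambda I-\mathcal{A})\Phi=\Psi$; uniqueness follows by running the elimination backwards starting from $\Psi=0$, where the hypothesis forces $\mathbf{p}=0$ and then $\mathbf{q}=0$. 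Boundedness of $R(\lambda:\mathcal{A})$ with respect to the graph norm on $D(\mathcal{A})$ is immediate from the corresponding bound on the scalar resolvent and the triangle inequality. I do not anticipate a genuine obstacle in this argument: it is a purely algebraic reduction to the scalar spectral result of Theorem \ref{spectrum}, and the only care required lies in tracking the $\delta$-type transition conditions, which are inherited for free through the range of $R(-\lambda^2:\mathcal{F}_Z)$.
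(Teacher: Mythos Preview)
Your proposal is correct and follows essentially the same approach as the paper: both arguments write out the $2\times 2$ block system, eliminate the second component to reduce to the scalar equation $(\mathcal{F}_Z+\lambda^2)\mathbf{p}=\lambda\mathbf{u}+\mathbf{v}$, and invoke the hypothesis $-\lambda^2\in\rho(\mathcal{F}_Z)$ to invert. Your version is simply more explicit about the routine domain and boundedness verifications, which the paper omits.
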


\begin{remark}
In Remark \ref{Zposi} below we give a explicit formulation for the resolvent operator $R(\eta:\mathcal F_Z)$ for any $\eta<0$ (without loss of generality) and $Z \neq 0$.
\end{remark}

\begin{proof}
For $\Psi=(\bold u, \bold v)\in H^1(\mathcal Y)\times L^2(\mathcal Y)$, $(\mathcal A-\lambda)^{-1}\Psi=(\bold h, \bold j)$ if and only if
\begin{equation}\label{1resolA}
\begin{cases}
I_3\bold u&=-\lambda I_3\bold h+I_3 \bold j \\
I_3\bold v&=-\mathcal F_Z\bold h-\lambda I_3 \bold j,
\end{cases}
\end{equation}
then $(-\lambda^2I_3-\mathcal F_Z)\bold h=I_3(\lambda \bold u+ \bold v)$. Therefore, by hypothesis  we obtain $I_3\bold h=(-\lambda^2I_3-\mathcal F_Z)^{-1}(\lambda \bold u+ \bold v)$. This finishes the proof.
\end{proof}

\begin{remark}\label{Zposi}
For future reference in our study, we establish the resolvent operator $R(-\lambda^2:\mathcal F_Z)$  and $Z\neq 0$. Thus, we start with $Z>0$. From Theorem \ref{spectrum} we obtain for every $\lambda>0$ (without loss of generality) that
for $\bold u=(u_j)_{j=1}^3\in L^2(\mathcal Y)$ and $(\Phi_j)_{j=1}^3=(\mathcal F_Z +\lambda^2I_3)^{-1} \bold u$ the following:
\begin{enumerate}
\item[(a)] for $x<0$
\begin{equation}
\label{formu1}
\Phi_1(x)=(-c_1^2\frac{d^2}{dx^2}+\lambda^2)^{-1}(u_1)(x)=\frac{d_1}{c_1^2}e^{\frac{\lambda}{ c_1 }x} +\frac{1}{2 c_1 \lambda}\int_{-\infty}^0 u_1(y) e^{-\frac{\lambda}{ c_1 } |x-y|} dy
\end{equation}

\item[(b)] for $x>0$ and $j=2,3$,
\begin{equation}
\label{formu2}
\Phi_j(x)=(-c_1^2\frac{d^2}{dx^2}+\lambda^2)^{-1}(u_j)(x)=\frac{d_j}{c_j^2}e^{-\frac{\lambda}{ c_j }x} +\frac{1}{2 c_j \lambda}\int_0^{\infty} u_j(y) e^{-\frac{\lambda}{ c_j } |x-y|} dy,
\end{equation}
where the constants $d_j=d_j(\lambda, (\Phi_j))$ are chosen  such that $(\Phi_j)\in D_{\delta,Z}$. In the following we determine these. So, define
$$
\begin{aligned}
t_1(\lambda)&=\frac{1}{2 c_1 }\int_{-\infty}^0 u_1(y) e^{\frac{\lambda}{ c_1 }y} dy\\
t_j(\lambda)&=\frac{1}{2 c_j }\int_0^{\infty} u_j(y) e^{-\frac{\lambda}{ c_j }y} dy,\quad j=2,3.
\end{aligned}
$$
Then, from the relations for $j=2,3$,
$$
\begin{aligned}
\Phi_1(0-)&=\frac{d_1}{c_1^2}+\frac{1}{\lambda}t_1(\lambda), & \Phi_j(0+) &=\frac{d_j}{c_j^2}+\frac{1}{\lambda}t_j(\lambda),\\
\Phi'_1(0-)&=\frac{d_1\lambda}{c_1^3}-\frac{1}{ c_1 }t_1(\lambda), & \Phi'_j(0+) &=-\frac{d_j\lambda}{c_j^3}+\frac{1}{ c_j }t_j(\lambda),
\end{aligned}
$$
we obtain the linear system
$$
M\left(\begin{array}{c}d_1 \\d_2 \\d_3\end{array}\right)\equiv \left(\begin{array}{ccc}\frac{1}{c_1^2} & -\frac{1}{c_2^2} & 0 \\0 & \frac{1}{c_2^2} & -\frac{1}{c_3^2} \\\frac{1}{ c_1 }+\frac{Z}{c_1^2\lambda} & \frac{1}{ c_2 } & \frac{1}{ c_3 }\end{array}\right)\left(\begin{array}{c}d_1 \\d_2 \\d_3\end{array}\right)=\frac{1}{\lambda}\left(\begin{array}{c}t_2(\lambda)-t_1(\lambda) \\t_3(\lambda)-t_2(\lambda)  \\\sum_{j=1}^3 c_j t_j(\lambda)-\frac{Z}{\lambda}t_1(\lambda)\end{array}\right).
$$
Thus, since $ \det (M)=\frac{1}{(c_1 c_2 c_3)^2}[\sum_{j=1}^3 c_j +\frac{Z}{\lambda}]$ and $Z, \lambda>0$, we obtain the uniqueness of the constants $d_j$ such that $(\Phi_j)\in D_{\delta,Z}$.
\end{enumerate}
Now, for $Z<0$, from Theorem \ref{spectrum} we obtain for every $\lambda>0$ (without loss of generality) and $\lambda^2 \neq -\lambda_0$ with $\lambda_0=-Z^2/(\sum_{j=1}^3 c_j )^2$ that
for $\bold u=(u_j)_{j=1}^3\in L^2(\mathcal Y)$ and $(\Psi_j)_{j=1}^3=(\mathcal F_Z +\lambda^2I_3)^{-1} \bold u$ the following:
\begin{enumerate}
\item[(c)] for $x<0$, $\alpha=-Z/\sum_{j=1}^3 c_j >0$, and  from \eqref{formu1},
\begin{equation}\label{formu3}
\Psi_1(x)=\frac{1}{\lambda^2+\lambda_0}e^{\frac{\alpha}{ c_1 }x} \langle u_1, e^{\frac{\alpha}{ c_1 }x}\rangle +\Phi_1(x),
\end{equation}

\item[(d)] for $x>0$ and  from \eqref{formu2},
\begin{equation}\label{formu4}
\Psi_j(x)=\frac{1}{\lambda^2+\lambda_0}e^{-\frac{\alpha}{ c_j }x} \langle u_j, e^{-\frac{\alpha}{ c_j }x}\rangle +\Phi_j(x),\quad j=2,3,
\end{equation}
here the constants $(d_j)$ in \eqref{formu1}-\eqref{formu2} (unique) are chosen such  that $(\Phi_j)\in D_{\delta,Z}$.

\end{enumerate}
\end{remark}

From Theorem \ref{spectrum} we can define the following equivalent $X^1_Z$-norm to $H^1(\mathcal Y)$, for $\bold v=(v_j)_{j=1}^3\in H^1(\mathcal Y)$
\begin{equation}\label{1norm}
\|\bold v\|_{X^1_Z}^2=\|\bold v'\|^2 _{L^2(\mathcal Y)} + (\beta+1)\|\bold v\|^2 _{L^2(\mathcal Y)} +Z|v_1(0-)|^2,
\end{equation}
where for $Z<0$, $\beta = \frac{Z^2}{9}=\frac{Z^2}{(\sum_{j=1}^3 c_j )^2}$, 
and for $Z\geqq 0$, $\beta=0$. We will denote by $H^1_Z(\mathcal Y)$ the space $H^1(\mathcal Y)$ with the norm $\|\cdot\|_{X^1_Z}$. Moreover, the following well-defined inner product in $H^1_Z(\mathcal Y)$, 
\begin{equation}\label{1inner}
\langle \bold u, \bold v\rangle_{1,Z}= \int_{-\infty}^0 u'_1(x)\overline{v'_1(x)}dx +\sum_{j=2}^3 \int_0^{\infty} u'_j(x) \overline{v'_j(x)}dx +(\beta +1) \langle \bold u, \bold v\rangle + Zu_1(0-)\overline{v_1(0-)},
\end{equation}
induces the $X^1_Z$-norm above (here we are considering $c_j^2 = 1$, without loss of generality).

The following theorem shows that the operator $\mathcal A\equiv JE$ is indeed the infinitesimal generator of a $C_0$-semigroup. To that end, we apply the classical Lumer-Phillips theory. 

\begin{theorem}
\label{cauchy1}
Let $Z\in \mathbb R$ and consider the linear operators $J$ and $E$ defined in \eqref{stat2}. Then, $\mathcal A\equiv JE$ with $D(\mathcal A)= D_{\delta, Z}\times \mathcal E(\mathcal Y)$ is the infinitesimal  generator of a $C_0$-semigroup $\{W(t)\}_{t\geqq 0}$ on $H^1(\mathcal Y)\times L^2(\mathcal Y)$. The initial value problem 
\begin{equation}\label{LW1}
\begin{cases}
\bold w_{t}=\mathcal A\bold w \\
\bold w(0)=\bold w_0\in D(\mathcal A)=D_{\delta, Z}\times \mathcal E(\mathcal Y)
\end{cases}
\end{equation}
has a unique solution $\bold w\in C([0, +\infty): D(\mathcal A))\cap C^1([0, +\infty): H^1(\mathcal Y)\times L^2(\mathcal Y))$ given by $\bold w(t)=W(t)\bold w_0$, $t\geqq 0$.

Moreover, for any $\Psi\in H^1(\mathcal Y)\times L^2(\mathcal Y)$ and $\theta>\beta+1$, $\beta=\frac{Z^2}{(\sum_{j=1}^3 c_j )^2}$, we have the representation formula
\begin{equation}\label{FRW}
W(t)\Psi=\frac{1}{2\pi i}\int_{\theta-i\infty}^{\theta+i\infty} e^{\lambda t} R(\lambda: \mathcal A) \Psi d\lambda
\end{equation}
where $\lambda\in \rho(\mathcal A)$ with $\RE \lambda = \theta$ and $R(\lambda: \mathcal A)=(\lambda I-\mathcal A)^{-1}$, and for every $\delta>0$, the integral converges uniformly in $t$ for every $t\in [\delta, 1/\delta]$.
\end{theorem}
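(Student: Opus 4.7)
The plan is to apply the Lumer--Phillips theorem to identify $\mathcal{A}$ as the infinitesimal generator of a $C_0$-semigroup on the Hilbert space $\mathcal{X} := \mathcal{E}(\mathcal{Y}) \times L^2(\mathcal{Y})$, endowed with the natural inner product
$$\langle (\mathbf{u}_1, \mathbf{v}_1),(\mathbf{u}_2, \mathbf{v}_2)\rangle_{\mathcal{X}} := \langle \mathbf{u}_1, \mathbf{u}_2\rangle_{1,Z} + \langle \mathbf{v}_1, \mathbf{v}_2\rangle_{L^2(\mathcal{Y})},$$
where $\langle \cdot,\cdot\rangle_{1,Z}$ is the inner product \eqref{1inner}, whose associated $X^1_Z$-norm is equivalent to the usual $H^1(\mathcal{Y})$-norm on $\mathcal{E}(\mathcal{Y})$ thanks to Theorem \ref{spectrum} (the non-negativity of $\mathcal{F}_Z + \beta I$ and the trace inequality $|v_1(0-)|^2 \lesssim \|\mathbf{v}\|_{H^1(\mathcal{Y})}^2$). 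Density of $D(\mathcal{A}) = D_{\delta,Z} \times \mathcal{E}(\mathcal{Y})$ in $\mathcal{X}$ follows from standard truncation/mollification arguments that preserve the vertex continuity.

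To establish dissipativity up to a shift, I take $\mathbf{w} = (\mathbf{u}, \mathbf{v}) \in D(\mathcal{A})$ and compute
$$\langle \mathcal{A}\mathbf{w}, \mathbf{w}\rangle_{\mathcal{X}} = \langle \mathbf{v}, \mathbf{u}\rangle_{1,Z} - \langle \mathcal{F}_Z \mathbf{u}, \mathbf{v}\rangle_{L^2(\mathcal{Y})}.$$
Integration by parts on each edge together with the $\delta$-vertex condition on $\mathbf{u}$ and the continuity of $\mathbf{v}$ at $\nu=0$ produces exactly the boundary term $Z u_1(0-)\overline{v_1(0-)}$ in $\langle \mathcal{F}_Z\mathbf{u},\mathbf{v}\rangle_{L^2}$, which is the complex conjugate of the boundary term $Z v_1(0-)\overline{u_1(0-)}$ inside $\langle \mathbf{v},\mathbf{u}\rangle_{1,Z}$; similarly the pairings $\sum_j\int v_j'\overline{u_j'}$ and $\sum_j\int u_j'\overline{v_j'}$ are conjugates. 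Taking real parts, both pairs cancel and what remains is
$$\mathrm{Re}\,\langle \mathcal{A}\mathbf{w}, \mathbf{w}\rangle_{\mathcal{X}} = (\beta+1)\,\mathrm{Re}\,\langle \mathbf{v}, \mathbf{u}\rangle_{L^2(\mathcal{Y})}.$$
Young's inequality combined with the bound $\|\mathbf{u}\|_{L^2}^2 \leq \|\mathbf{u}\|_{X^1_Z}^2$ (a direct consequence of $\mathcal{F}_Z\geq -\beta I$) then yields $\mathrm{Re}\,\langle \mathcal{A}\mathbf{w}, \mathbf{w}\rangle_{\mathcal{X}} \leq (\beta+1)\,\|\mathbf{w}\|_{\mathcal{X}}^2$, so that $\mathcal{A} - (\beta+1)I$ is dissipative on $\mathcal{X}$.

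For the range condition, fix any real $\lambda > \beta+1$ with $\lambda\neq |Z|/\sum_j c_j$; Theorem \ref{spectrum} guarantees $-\lambda^2 \in \rho(\mathcal{F}_Z)$, and Theorem \ref{resol} then provides the explicit formula \eqref{resolA} exhibiting $\lambda I - \mathcal{A}: D(\mathcal{A}) \to \mathcal{X}$ as a bijection: the first component $-R(-\lambda^2:\mathcal{F}_Z)(\lambda\mathbf{u} + \mathbf{v})$ lies in $D_{\delta,Z}$, and the second component $-\lambda R(-\lambda^2:\mathcal{F}_Z)(\lambda\mathbf{u} + \mathbf{v}) - \mathbf{u}$ inherits continuity at $\nu = 0$ from $\mathbf{u}\in\mathcal{E}(\mathcal{Y})$ and from $D_{\delta,Z}\subset\mathcal{E}(\mathcal{Y})$. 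The Lumer--Phillips theorem now produces the $C_0$-semigroup $\{W(t)\}_{t\geq 0}$ on $\mathcal{X}$, and standard Hille--Yosida theory delivers the unique solution $\mathbf{w}(t)=W(t)\mathbf{w}_0$ of \eqref{LW1} with the claimed regularity in $C([0,\infty);D(\mathcal{A}))\cap C^1([0,\infty);\mathcal{X})$.

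Finally, the contour integral representation \eqref{FRW} follows from the complex inversion of the Laplace transform: for every $\theta > \beta+1$ the vertical line $\mathrm{Re}\,\lambda = \theta$ lies strictly to the right of the growth bound of $W$, the Laplace transform of $W(\cdot)\Psi$ coincides with $R(\lambda:\mathcal{A})\Psi$ there, and the classical Bromwich formula (e.g.~Pazy, Ch.~1) yields \eqref{FRW} with uniform convergence on compact time intervals $[\delta,1/\delta]$. The main obstacle is the careful bookkeeping in the dissipativity computation: the $X^1_Z$-inner product has been engineered precisely so that the boundary contribution $Z|u_1(0-)|^2$ arising from integration by parts is already absorbed into the norm, laying bare the skew-adjoint structure of $\mathcal{A}$; a secondary technical issue is the integrability along the Bromwich contour, which follows from the resolvent decay $\|R(\lambda:\mathcal{A})\| = O(|\mathrm{Im}\,\lambda|^{-1})$ readable from \eqref{resolA} together with the corresponding bound on $R(-\lambda^2:\mathcal{F}_Z)$ along vertical lines.
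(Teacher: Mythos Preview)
Your proposal is correct and follows essentially the same route as the paper: Lumer--Phillips applied to the shifted operator $\mathcal{A}-(\beta+1)I$ on the Hilbert space built from the $X^1_Z$-inner product \eqref{1inner}, the dissipativity computation via integration by parts using the $\delta$-vertex condition, the range condition supplied by Theorem~\ref{resol}, and the Laplace inversion for \eqref{FRW}. The only cosmetic difference is that you set up the argument directly on $\mathcal{E}(\mathcal{Y})\times L^2(\mathcal{Y})$ rather than on $H^1(\mathcal{Y})\times L^2(\mathcal{Y})$; since $D(\mathcal{A})=D_{\delta,Z}\times\mathcal{E}(\mathcal{Y})$ is dense only in the former, your choice is in fact the natural one, and the paper effectively uses the same space once it invokes Proposition~\ref{preser}.
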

\begin{proof} We divide the proof in several steps (without lost of generality we consider $c_j^2=1$):
\begin{enumerate}
\item[1)] We consider the Hilbert space $X_Z\equiv H^1_Z(\mathcal Y)\times L^2(\mathcal Y)$ with inner product $\langle \cdot, \cdot\rangle_{X_Z}=\langle \cdot, \cdot\rangle_{1,Z} + \langle \cdot, \cdot\rangle$, with $\langle \cdot, \cdot\rangle_{1,Z}$ defined in \eqref{1inner}. Define $\mathcal B=\mathcal A-\gamma I$, $\gamma=\beta+1>0$, with $\beta>0$ as in \eqref{1norm}. Then the following linear initial value problem
\begin{equation}\label{Bcauchy}
\begin{cases}
\bold u_{t}=\mathcal B\bold u \\
\bold u(0)=\bold u_0\in D(\mathcal B)=D_{\delta, Z}\times \mathcal E(\mathcal Y)
\end{cases}
\end{equation}
has a unique solution $\bold u\in C([0, +\infty): D(\mathcal B))\cap C^1([0, +\infty): X_Z)$ given by $\bold u(t)=U(t)\bold u_0$, $t\geqq 0$, where $\{U(t)\}_{t\geqq 0}$ is a $C_0$-semigroup of contractions on $X_Z$. Indeed, The idea is to use the classical Lumer-Phillips Theorem (see, e.g., Pazy \cite{Pa}):
\begin{enumerate}
\item[a)] $\mathcal B$ is dissipative on $X_Z$: for $ \Phi=(\bold u, \bold v)\in D(\mathcal B)$ with $\bold u=(u_j)_{j=1}^3\in D(\mathcal B)$ and $\bold v=(v_j)_{j=1}^3\in \mathcal E(\mathcal Y)$ (with $u_j, v_j$ real-valued without lost of generality)
\begin{equation}
\begin{aligned}
&\langle -\mathcal B\Phi, \Phi\rangle_{X_Z}=\langle -\bold v, \bold u \rangle_{1,Z}+ \langle \mathcal F_Z\bold u, \bold v\rangle +\gamma \|\bold u \|^2_{X^1_Z} +\gamma \|\bold v\|^2_{L^2(\mathcal Y)}\\
&=-\langle \bold v, \bold u \rangle_{1,Z}-Zu_1(0-)v_1(0-) + \int_{-\infty}^0u_1'v_1'dx+\sum_{j=2}^3\int_0^{\infty}u_j'v_j'dx + \gamma \|\bold u \|^2_{X^1_Z} +\gamma \|\bold v\|^2_{L^2(\mathcal Y)}\\
&=(\beta+1)\Big[\|\bold u \|^2_{X^1_Z} + \|\bold v\|^2_{L^2(\mathcal Y)}-\langle \bold u, \bold v \rangle\Big]\geqq 0,
\end{aligned}
\end{equation}
because of the Cauchy-Schwartz inequality and $\|\bold u\|^2_{L^2(\mathcal Y)}\leqq \|\bold u \|^2_{X^1_Z}$ by \eqref{1norm}. 

\item[b)] From Theorem \ref{resol}  we can choose $\lambda$ such that $\lambda+\gamma>0$ and $\lambda+\gamma\in \rho(\mathcal A)$,  then the range, $R(\lambda I-\mathcal B)=R((\lambda+\gamma)I-\mathcal A)$, of  $\lambda I-\mathcal B$ is $X_Z$. Thus we obtain that $\mathcal B$ is the infinitesimal generator of a $C_0$-semigroup of contractions $\{U(t)\}_{t\geqq 0}$ on $X_Z$. Therefore, the solution of the linear problem in \eqref{Bcauchy} is given by $\bold u(u)=U(t)\bold u_0$.
\end{enumerate}
\item[2)] Define $W(t)=e^{\gamma t} U(t)$, then $\{W(t)\}_{t\geqq 0}$ is a $C_0$-semigroup  on $X_Z$ with infinitesimal generator
$$
W'(0)=\gamma I+\mathcal A-\gamma I= \mathcal A.
$$
Then, since the norm $\|\cdot\|_{H^1(\mathcal Y)}$  is equivalent to  the  norm $\|\cdot\|_{X_Z^1}$ on $H^1(\mathcal Y)$, we obtain that $\{W(t)\}_{t\geqq 0}$ is a $C_0$-semigroup  on $H^1(\mathcal Y)\times L^2(\mathcal Y)$ and $\bold w(t)=W(t)\bold w_0$ is the unique solution for the linear problem \eqref{LW1}.
\item[3)] From item 2) we have $\|W(t)\|_{H^1(\mathcal Y)\times L^2(\mathcal Y)}=e^{\gamma t} \|U(t)\|_{H^1(\mathcal Y)\times L^2(\mathcal Y)}\leqq M e^{\gamma t} \|U(t)\|_{X_Z}\leqq M e^{\gamma t}$, for $M>0$, $\gamma=\beta+1>0$ and $t\geqq 0$. Therefore, from Theorem \eqref{resol}, the semigroup theory, and the Laplace transform we obtain for  $\theta>\beta+1$ the representation formula in \eqref{FRW}. This finishes the proof.
\end{enumerate}
\end{proof}

The following proposition simply states the (expected) invariance property of the energy space under the action of the semigroup.
\begin{proposition}\label{preser}
The semigroup $\{W(t)\}_{t\geqq 0}$ defined by formula \eqref{FRW} left invariance the subspace $\mathcal E(\mathcal Y)\times L^2(\mathcal Y)$. Moreover, $W(t)(\mathcal E(\mathcal Y)\times L^2(\mathcal Y))\subset \mathcal E(\mathcal Y)\times \mathcal C(\mathcal Y)$, $t>0$, where
\begin{equation}\label{0continuity}
\mathcal C(\mathcal Y)=\{(v_j)_{j=1}^3\in  L^2(\mathcal Y): v_1(0-)=v_2(0+)=v_3(0+)\}.
\end{equation}
\end{proposition}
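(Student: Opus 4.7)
The plan is to combine the explicit form of the resolvent in Theorem \ref{resol} with the representation formula \eqref{FRW} and a density argument based on the fact that $D(\mathcal{A})=D_{\delta,Z}\times\mathcal E(\mathcal Y)$ is dense in $\mathcal E(\mathcal Y)\times L^2(\mathcal Y)$ with respect to the $H^1\times L^2$ topology. The density of $D_{\delta,Z}$ in $\mathcal E(\mathcal Y)$ in the $H^1$ norm follows from the general fact that, for a closed semi-bounded quadratic form, the operator domain is dense in the form domain in the form norm; here the form domain of $\mathcal F_Z$ is $\mathcal E(\mathcal Y)$ and the form norm is equivalent to the $H^1$ norm, by \eqref{1norm}.

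For the invariance claim, the key observation is that \eqref{resolA} maps $\mathcal E(\mathcal Y)\times L^2(\mathcal Y)$ into $\mathcal E(\mathcal Y)\times\mathcal E(\mathcal Y)$: the first component $-R(-\lambda^2:\mathcal F_Z)(\lambda\mathbf u+\mathbf v)$ lies in $D(\mathcal F_Z)\subset\mathcal E(\mathcal Y)$, and the second component $-\lambda R(-\lambda^2:\mathcal F_Z)(\lambda\mathbf u+\mathbf v)-\mathbf u$ is the sum of an element of $D(\mathcal F_Z)$ and the input $-\mathbf u\in\mathcal E(\mathcal Y)$. Hence, for $\Psi_0\in D(\mathcal A)$ the classical solution $W(t)\Psi_0$ stays in $D(\mathcal A)\subset \mathcal E(\mathcal Y)\times\mathcal E(\mathcal Y)$ for all $t\geqq 0$. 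For general $\Psi\in\mathcal E(\mathcal Y)\times L^2(\mathcal Y)$, approximate by $\Psi_n\in D(\mathcal A)$ in the $H^1\times L^2$ topology and invoke the boundedness of $W(t)$ on $H^1(\mathcal Y)\times L^2(\mathcal Y)$ (Theorem \ref{cauchy1}) to pass to the limit $W(t)\Psi_n\to W(t)\Psi$ in $H^1\times L^2$; since the vertex-continuity condition defining $\mathcal E(\mathcal Y)$ is the joint kernel of the continuous trace functionals $\mathbf u\mapsto u_1(0-)-u_2(0+)$ and $\mathbf u\mapsto u_2(0+)-u_3(0+)$, the subspace $\mathcal E(\mathcal Y)$ is closed in $H^1(\mathcal Y)$, and therefore the first component of $W(t)\Psi$ remains in $\mathcal E(\mathcal Y)$.

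For the sharper statement $W(t)(\mathcal E\times L^2)\subset\mathcal E\times\mathcal C$ when $t>0$, use the resolvent identity $\mathcal F_Z R(-\lambda^2:\mathcal F_Z)=-\lambda^2 R(-\lambda^2:\mathcal F_Z)-I$ to rewrite the second component of $R(\lambda:\mathcal A)\Psi$ as $\mathcal F_Z R(-\lambda^2:\mathcal F_Z)\mathbf u-\lambda R(-\lambda^2:\mathcal F_Z)\mathbf v$, eliminating the troublesome constant-in-$\lambda$ term $-\mathbf u$ and yielding an integrand that decays as $|\lambda|\to\infty$ along the Bromwich contour. Using the explicit formulas \eqref{formu1}--\eqref{formu4} of Remark \ref{Zposi}, one may evaluate this integrand at the vertex and show that the common scalar value (identical across the three edges since each resolvent term sits in $\mathcal E(\mathcal Y)$) leads to a convergent scalar Bromwich integral for $t>0$, thus defining the common trace $v_j(0\pm,t)$ of the second component of $W(t)\Psi$ and putting it in $\mathcal C(\mathcal Y)$. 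The main obstacle is exactly this last step: the vectorial Bromwich integral for the second component converges only in the $L^2(\mathcal Y)$ topology, where pointwise vertex values are not defined a priori, so one must exploit the resolvent-identity rewriting together with the explicit decay estimates available from Remark \ref{Zposi} to extract the convergent scalar integral governing the common vertex value and to confirm that it is consistent with the $L^2$ limit.
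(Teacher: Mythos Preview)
Your proposal is correct and shares with the paper the central observation that, by \eqref{resolA}, the resolvent $R(\lambda:\mathcal A)$ maps $\mathcal E(\mathcal Y)\times L^2(\mathcal Y)$ into $\mathcal E(\mathcal Y)\times\mathcal E(\mathcal Y)$, since both components involve $R(-\lambda^2:\mathcal F_Z)(\lambda\mathbf u+\mathbf v)\in D_{\delta,Z}\subset\mathcal E(\mathcal Y)$ and the second also the input $-\mathbf u\in\mathcal E(\mathcal Y)$. From that point, however, the two arguments diverge in execution. The paper argues directly from the Bromwich representation \eqref{FRW}: since the integrand lies in $\mathcal E(\mathcal Y)\times\mathcal E(\mathcal Y)\subset\mathcal E(\mathcal Y)\times\mathcal C(\mathcal Y)$ for every $\lambda$ on the contour, and $\mathcal E(\mathcal Y)$ is closed in $H^1(\mathcal Y)$, the integral inherits the vertex continuity in the first component; the paper then simply records the chain of inclusions for the second component. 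Your route for the invariance part instead bypasses the Bromwich integral via a density argument (classical solutions stay in $D(\mathcal A)$ by general semigroup theory, then approximate in $\mathcal E\times L^2$ using that $D_{\delta,Z}$ is $H^1$-dense in its form domain $\mathcal E(\mathcal Y)$); this is a clean alternative that does not rely on convergence properties of the contour integral. For the ``moreover'' assertion you are noticeably more careful than the paper: you correctly flag that the second-component Bromwich integral converges only in $L^2(\mathcal Y)$, where $\mathcal C(\mathcal Y)$ is not closed, and you propose to cure this via the resolvent identity $\mathcal F_Z R(-\lambda^2:\mathcal F_Z)=-\lambda^2 R(-\lambda^2:\mathcal F_Z)-I$ to remove the constant-in-$\lambda$ term and then exploit the explicit decay available from \eqref{formu1}--\eqref{formu4}. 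The paper's brief proof does not address this convergence subtlety explicitly, so your treatment is the more rigorous one on that point.
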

\begin{proof} By the representation of $W(t)$ in \eqref{FRW} is sufficient to show that the resolvent operator $R(\lambda:\mathcal A)\Phi\in \mathcal E(\mathcal Y)\times L^2(\mathcal Y)$ for $\Phi\in \mathcal E(\mathcal Y)\times L^2(\mathcal Y)$. Indeed, for $\Psi=(\bold u, \bold v)$ we have from \eqref{resolA} that $R(-\lambda^2: \mathcal F_Z)(\lambda \bold u+\bold v)\in D_{\delta, Z}\subset \mathcal E(\mathcal Y)$ and so $R(\lambda:\mathcal A)\Phi\in \mathcal E(\mathcal Y)\times \mathcal E(\mathcal Y)\subset \mathcal E(\mathcal Y)\times \mathcal C(\mathcal Y)\subset \mathcal E(\mathcal Y)\times L^2(\mathcal Y)$.
\end{proof}

Our local well-posedness result for the sine-Gordon equation on a $\mathcal Y$-junction with a $\delta$-interaction is the following.

\begin{theorem}\label{well0} 
	For any $\Psi \in \mathcal
	E(\mathcal{Y})\times L^2(\mathcal Y) $
	there exists $T > 0$ such that the sine-Gordon
	equation \eqref{stat1} has a unique solution $\mathbf w \in C
	([0,T]; \mathcal E(\mathcal{Y})\times  L^2(\mathcal Y) )$ satisfying  $\mathbf
	w(0)=\Psi$. For
	each $T_0\in (0, T)$ the mapping 
	\begin{equation}\label{mapping}
	\Psi\in \mathcal E (\mathcal{Y})\times  L^2(\mathcal Y) \to \mathbf w \in C ([0,T_0];
	\mathcal
	E(\mathcal{Y})\times  L^2(\mathcal Y) ),
	\end{equation}
	 is at least of class $C^2$. Moreover, for all $t>0$, $\mathbf w(t) \in \mathcal E(\mathcal{Y})\times  \mathcal C(\mathcal Y) $.
\end{theorem}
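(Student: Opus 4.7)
The plan is to treat Theorem \ref{well0} by Duhamel's principle, reducing it to a fixed-point problem for the mild formulation
\[
\mathbf{w}(t) = W(t)\Psi + \int_0^t W(t-s)\, F(\mathbf{w}(s))\, ds,
\]
where $\{W(t)\}_{t\geqq 0}$ is the $C_0$-semigroup constructed in Theorem \ref{cauchy1}. Setting $X := \mathcal{E}(\mathcal{Y}) \times L^2(\mathcal{Y})$, Theorem \ref{cauchy1} supplies the bound $\|W(t)\Phi\|_X \leq M e^{\gamma t}\|\Phi\|_X$ for constants $M,\gamma>0$ and all $\Phi\in X$, while Proposition \ref{preser} guarantees that $W(t)$ leaves $X$ invariant and in fact maps $X$ into $\mathcal{E}(\mathcal{Y}) \times \mathcal{C}(\mathcal{Y})$ for every $t>0$.

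The key observation is that $F$ is globally Lipschitz as a self-map of $X$. Indeed, $F(\mathbf{w})$ has its first three components equal to zero (which trivially lie in $\mathcal{E}(\mathcal{Y})$) and its last three components of the form $-\sin u_j$, which satisfy $\|\sin u_j\|_{L^2(E_j)} \leq \|u_j\|_{L^2(E_j)}$ by the pointwise bound $|\sin a|\leq |a|$. The inequality $|\sin a-\sin b|\leq |a-b|$ then gives
\[
\|F(\mathbf{w}_1)-F(\mathbf{w}_2)\|_X \leq \|\mathbf{w}_1-\mathbf{w}_2\|_X.
\]
Combining this with the semigroup bound and applying the standard Banach fixed-point argument on the closed ball $\{\mathbf{w}\in C([0,T];X) : \sup_{t\in [0,T]}\|\mathbf{w}(t)-W(t)\Psi\|_X \leq R\}$, with $T$ chosen small in terms of $M,\gamma$ and $\|\Psi\|_X$, yields a unique fixed point, hence local existence and uniqueness of the mild solution.

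For the $C^2$-regularity of the data-to-solution mapping \eqref{mapping}, the idea is to apply the implicit function theorem to the fixed-point equation $\Gamma_\Psi(\mathbf{w})=\mathbf{w}$ defined by the right-hand side above. Since $\Gamma_\Psi$ is affine in $\Psi$, what must be checked is the $C^2$-differentiability (as a map $X\to X$) of the Nemitskii operator induced by $\sin$. This is the one delicate point: one uses the pointwise Taylor expansion $\sin(u+h)-\sin(u)-(\cos u)\,h = -\tfrac{1}{2}\sin(\xi)\,h^2$ together with the one-dimensional Sobolev embedding $H^1(E_j)\hookrightarrow L^\infty(E_j)$, via estimates such as $\|h\|_{L^4(E_j)}^2 \leq \|h\|_{L^\infty(E_j)}\|h\|_{L^2(E_j)} \leq C\|h\|_{H^1(E_j)}\|h\|_{L^2(E_j)}$, to absorb the quadratic remainder into an $o(\|h\|_X)$ error. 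Second derivatives are handled analogously, giving a candidate bilinear form $(h,k)\mapsto -(\sin u)\,h\,k$ whose mapping properties into $L^2(\mathcal{Y})$ are controlled the same way.

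Finally, the terminal assertion $\mathbf{w}(t)\in \mathcal{E}(\mathcal{Y}) \times \mathcal{C}(\mathcal{Y})$ for all $t>0$ is immediate from Proposition \ref{preser}: the term $W(t)\Psi$ already lies in this subspace for $t>0$, and for each fixed $t>0$ the integrand $W(t-s)F(\mathbf{w}(s))$ belongs to $\mathcal{E}(\mathcal{Y}) \times \mathcal{C}(\mathcal{Y})$ for every $s\in [0,t)$, so that the Bochner integral does as well. The main technical obstacle in this scheme is precisely the verification of higher-order differentiability of the Nemitskii operator on the graph energy space---this is where the one-dimensional Sobolev embedding enters in an essential manner to beat the quadratic loss in the pointwise Taylor bound; everything else is the standard semigroup-plus-contraction package, now applicable thanks to Theorems \ref{spectrum}, \ref{resol}, \ref{cauchy1} and Proposition \ref{preser}.
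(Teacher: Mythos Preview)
Your argument follows essentially the same route as the paper's: Duhamel formulation via the semigroup $W(t)$ from Theorem \ref{cauchy1}, global Lipschitz bound $\|F(\mathbf{w}_1)-F(\mathbf{w}_2)\|_X\leq\|\mathbf{w}_1-\mathbf{w}_2\|_X$, Banach fixed point for local existence and uniqueness, implicit function theorem for the regularity of the data-to-solution map, and Proposition \ref{preser} for the final inclusion. The paper simply asserts that ``$F(\mathbf z)$ is smooth'' and cites \cite{AngGol18b} for the IFT step, whereas you spell out the Fr\'echet differentiability of the Nemitskii operator $u\mapsto\sin u$ from $H^1(\mathcal Y)$ to $L^2(\mathcal Y)$ using the embedding $H^1(E_j)\hookrightarrow L^\infty(E_j)$; this extra detail is correct and is exactly what underlies the paper's unjustified smoothness claim.

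One small caution on the last assertion: you argue that the Bochner integral $\int_0^t W(t-s)F(\mathbf w(s))\,ds$ lies in $\mathcal E(\mathcal Y)\times\mathcal C(\mathcal Y)$ because each integrand does, but $\mathcal C(\mathcal Y)$ as defined in \eqref{0continuity} is not a closed subspace of $L^2(\mathcal Y)$, so the ``Bochner integral preserves the subspace'' step is not automatic. The paper is equally informal here; a clean fix is to observe that in fact $F(\mathbf w(s))\in\{0\}\times\mathcal E(\mathcal Y)$ (since $u\in H^1$ implies $\sin u\in H^1$ with the same vertex continuity), and the proof of Proposition \ref{preser} actually shows $R(\lambda:\mathcal A)$ maps $X$ into $\mathcal E(\mathcal Y)\times\mathcal E(\mathcal Y)$, so one can work throughout with the closed subspace $\mathcal E(\mathcal Y)\times\mathcal E(\mathcal Y)$ in the integral term.
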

\begin{proof} Based on Theorem \ref{cauchy1} and Proposition \ref{preser}
	the local well-posedness result in $X(\mathcal{Y})\equiv \mathcal E(\mathcal{Y})\times  L^2(\mathcal Y)$ follows from standard arguments of the Banach fixed point theorem. We will give the
	sketch of the proof for convenience of the reader. Consider the mapping $J_{\Psi}:
	C([0, T]: X(\mathcal{Y}))\longrightarrow C([0,T];X( \mathcal Y))$ given by
	$$
	J_{\Psi}[\mathbf w](t)=e^{t\mathcal A}\Psi+ \int
	_0^t e^{(t-s) \mathcal A}F(\mathbf w(s))ds,
	$$
	where $e^{t\mathcal A}$ is the $C_0$-semigroup $\{W(t)\}_{t\geqq 0}$ defined in \eqref{FRW}.
One needs to show that the mapping $J_{\Psi}$ is well-defined. We note immediately  that the nonlinearity satisfies for $\mathbf w=(\bold u, \bold v)\in X(\mathcal{Y})$ that $F(\mathbf w)\in   \mathcal E (\mathcal{Y})\times  \mathcal C(\mathcal Y)\subset \mathcal E (\mathcal{Y})\times  L^2(\mathcal Y)$ with $\| F(\mathbf w)\|_{X(\mathcal{Y})}\leqq \|\bold u\|_{L^2(\mathcal Y)}\leqq \|\mathbf w\|_{X(\mathcal{Y})}$. Thus we obtain for $t\in [0,T]$
\[
\| J_\Psi[\mathbf w](t)\| _{X(\mathcal{Y})}\leq Me^{\gamma T}\|\Psi\|_{X(\mathcal{Y})}+\frac{M}{\gamma} \Big(e^{\gamma T}-1\Big)
\sup\limits_{s\in[0,T]}\| \mathbf  w(s)\| _{X(\mathcal{Y})},
\]
where the positive constants $M, \gamma$ do not depend on $\Psi$ and are determined by the semigroup $W(t)$ (see Theorem \ref{cauchy1} and its proof). The
	continuity and contraction property of $J_{\Psi}$ are proved
	in a standard
	way. Therefore, we obtain the existence of a unique solution to
	the Cauchy problem
	associated to  \eqref{stat1}  on $\mathcal E(\mathcal{Y})\times  L^2(\mathcal Y)$ and that the mapping data-solution in \eqref{mapping} is at least continuous.
	
Next, we recall that the argument based on the contraction
	mapping principle above
	has the advantage that if $F(\mathbf w)$ has a specific regularity, 
	then it is inherited by the mapping data-solution. In particular,
	following the ideas in \cite{AngGol18b}, we consider
	for $(\Psi, \mathbf  z)\in B(\Psi;\epsilon)\times C([0, T], X(\mathcal Y)) $ the
	mapping
	$$
	\Gamma(\Psi, \mathbf z  )(t)=\mathbf  z(t)- J_{\Psi}[\mathbf  z](t),\qquad
	t\in [0, T].
	$$
	Then $\Gamma(\Psi, \mathbf  w)(t)=0$ for all $t\in [0, T]$, and since $F(\mathbf z)$ is smooth we obtain that $\Gamma$ is
	smooth. Hence, using the arguments applied for obtaining the
	local well-posedness
	in $X(\mathcal Y)$ above, we can show that the operator
	$\partial_\mathbf 
	z\Gamma(\Psi, \mathbf  w)$ is one-to-one and onto. Thus, by the
	Implicit Function
	Theorem there exists a smooth mapping $\mathbf  \Lambda: B(\Psi;\delta)\to C([0,
	T], X(\mathcal Y))$ such that $\Gamma(\mathbf  V_0, \mathbf 
	\Lambda (\mathbf  V_0))=0$
	for all $\mathbf  V_0\in B(\Psi;\delta)$. This argument establishes
	the smoothness
	property of the mapping data-solution associated to the sine-Gordon equation.
	Lastly, from the Proposition \ref{preser} and from the arguments above we obtain that for all $t>0$, $\mathbf w(t) \in \mathcal E(\mathcal{Y})\times  \mathcal C(\mathcal Y) $. This finishes the proof.
\end{proof}

 \section{Linear instability criterion for the sine-Gordon model on a $\mathcal Y$-junction}
\label{seccriterium}

In this section we establish a linear instability criterion of stationary solutions for the sine-Gordon model \eqref{sg2} on a  $\mathcal{Y}$-junction. The analytical criterion developed here applies to both the typical $\mathcal Y$-junction of type I , (see Figure \ref{figYtipoI}), and of type II (see Figure \ref{figYtipoII}). More importantly, the criterion also applies to any type of stationary solutions independently of the boundary conditions under consideration and can be therefore used to study configurations with boundary rules at the vertex of $\delta'$-interaction type, or with other types of stationary solutions to the sine-Gordon equation such as breathers, for instance.

Let us suppose that $JE$ on a domain $D(JE)\subset L^2(\mathcal Y)$ is the infinitesimal generator of a $C_0$-semigroup on $L^2(\mathcal Y)$ and the stationary solution $\Phi=(\phi_1(x), \phi_2(x), \phi_3(x),0,0,0)\in D(JE)$. Thus, every component satisfies the equation
\begin{equation}\label{statio}
-c^2_j \phi''_j + \sin (\phi_j)=0,\quad j=1,2,3.
\end{equation}
 Now, we suppose  that $\bold w$ satisfies formally  equality in \eqref{stat1} and we define
  \begin{equation}\label{stat3}
 \bold v \equiv \bold w-\Phi,
  \end{equation}
  then, from \eqref{statio} we obtain (by using the notation) the following  linearized system for \eqref{stat1} around $\Phi$. 
   \begin{equation}\label{stat4}
  \bold v_t=J\mathcal E\bold v
 \end{equation} 
  with $\mathcal E$ being the $6\times 6$ diagonal-matrix $\mathcal E=\left(\begin{array}{cc} \mathcal L & 0 \\0 & I_3\end{array}\right)$, and 
 \begin{equation}\label{stat5}
\mathcal{L} =\Big (\Big(-c_j^2\frac{d^2}{dx^2}+ \cos (\phi_j)
\Big)\delta_{j,k} \Big ),\qquad 1\leqq j, k\leqq 3.
\end{equation}

We point out the equality $J\mathcal E=J E+\mathcal{T}$, with 
$$
\mathcal{T} = \left(\begin{array}{cc} 0& 0 \\ \big( - \cos(\phi_j) \, \delta_{j,k} \big)& 0\end{array}\right)
$$
 being a \emph{bounded} operator on $H^1(\mathcal G)\times L^2(\mathcal G)$. This  implies that $J\mathcal E$ also generates a  $C_0$-semigroup on  $H^1(\mathcal G)\times L^2(\mathcal G)$ (see Pazy \cite{Pa}).
  
In the sequel, our objective is to provide sufficient conditions for the trivial solution $\bold v \equiv 0$ to be unstable by the linear flow \eqref{stat4}. More precisely, we are interested in finding a {\it growing mode solution} of \eqref{stat4} with the form $ \bold v=e^{\lambda t} \Psi$ and $\RE \lambda >0$. In other words, we need to solve the formal system 
 \begin{equation}\label{stat6}
 J\mathcal E \Psi=\lambda \Psi,
\end{equation}
with $\Psi\in D(J\mathcal E)$. If we denote by $\sigma(J\mathcal E)= \sigma_{\mathrm{pt}}(J\mathcal E)\cup \sigma_{\mathrm{ess}}(J\mathcal E)$ the spectrum  of $J\mathcal E$ (namely, $\lambda \in  \sigma_{\mathrm{pt}}(J\mathcal E)$ if $\lambda$ is isolated and with finite multiplicity), the later discussion
suggests the usefulness of the following definition:
\begin{definition}
The stationary vector solution $\Phi \in D(\mathcal E)$    is said to be \textit{spectrally stable} for model sine-Gordon if the spectrum of $J\mathcal E$, $\sigma(J\mathcal \mathcal E)$, satisfies $\sigma(J\mathcal E)\subset i\mathbb{R}.$
Otherwise, the stationary solution $\Phi\in D(\mathcal E)$   is said to be \textit{spectrally unstable}.
\end{definition}

It is standard to show that $ \sigma_{\mathrm{pt}}(J\mathcal E)$ is symmetric with respect  to both the real and imaginary axes and $ \sigma_{\mathrm{ess}}(J\mathcal E)\subset i\mathbb{R}$ by supposing $J$ skew-symmetric and $\mathcal E$ self-adjoint   (by supposing, by instance, Assumption $(S_3)$ below for $\mathcal L$ (see \cite[Lemma 5.6 and Theorem 5.8]{GrilSha90}). These cases on $J$ and  $\mathcal E$ will be considered in our theory. Hence  it is equivalent to say that $\Phi\in D(J\mathcal E)$ is  \textit{spectrally stable} if $ \sigma_{\mathrm{pt}}(J\mathcal E)\subset i\mathbb{R}$, and it is spectrally unstable if $ \sigma_{\mathrm{pt}}(J\mathcal E)$ contains point $\lambda$ with  $\RE \lambda>0.$
 
 From \eqref{stat6}, our eigenvalue problem to solve is now reduced to,
  \begin{equation}\label{stat11}    
 J\mathcal E\Psi =\lambda \Psi, \quad \RE \lambda >0,\;\;\Psi\in D(\mathcal E).
 \end{equation}  
 Next, we establish our theoretical framework and assumptions for obtaining a nontrivial solution to  problem in \eqref{stat11}:
 \begin{enumerate}
 \item[($S_1$)]  $J\mathcal E$ is the generator of a $C_0$-semigroup $\{S(t)\}_{t\geqq 0}$.  
 \item[($S_2$)] Let $\mathcal L$ be the matrix-operator in \eqref{stat5}  defined on a domain $D(\mathcal L)\subset L^2(\mathcal G)$ on which $\mathcal L$ is self-adjoint.
 \item[($S_3$)] Suppose $\mathcal L:D(\mathcal L)\to L^2(\mathcal G)$ is  invertible  with Morse index $n(\mathcal L)=1$ and such that $\sigma(\mathcal L)=\{\lambda_0\}\cup J_0$ with $J_0\subset [r_0, +\infty)$, for $r_0>0$, and $\lambda_0<0$,
 \end{enumerate} 
 
Our linear instability criterion is the following.
\begin{theorem}\label{crit}
Suppose the assumptions $(S_1)$ - $(S_3)$ hold.  Then the operator $J\mathcal E$ has a real positive and a real negative eigenvalue. 
\end{theorem}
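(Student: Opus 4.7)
The plan is to exploit the block structure
\[
J\mathcal{E} \;=\; \begin{pmatrix} 0 & I_3 \\ -\mathcal{L} & 0 \end{pmatrix},
\]
which, as soon as one writes an eigenfunction $\Psi=(\mathbf{f},\mathbf{g})^{\top}\in D(J\mathcal{E})$, turns the vector equation $J\mathcal{E}\Psi=\lambda\Psi$ into the pair $\mathbf{g}=\lambda\mathbf{f}$ and $-\mathcal{L}\mathbf{f}=\lambda\mathbf{g}$. Substituting the first relation into the second eliminates $\mathbf{g}$ and produces the equivalent scalar eigenvalue problem
\[
\mathcal{L}\mathbf{f} \;=\; -\lambda^{2}\,\mathbf{f},\qquad \mathbf{f}\in D(\mathcal{L}).
\]
Consequently $\lambda\in\sigma_{\mathrm{pt}}(J\mathcal{E})$ with eigenvector of the form $(\mathbf{f},\lambda\mathbf{f})^{\top}$ if and only if $-\lambda^{2}\in\sigma_{\mathrm{pt}}(\mathcal{L})$ with eigenfunction $\mathbf{f}$.

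Next I invoke hypothesis $(S_{3})$. Since $\mathcal{L}$ is self-adjoint, invertible, and has Morse index exactly equal to one, with spectrum $\sigma(\mathcal{L})=\{\lambda_{0}\}\cup J_{0}$, $J_{0}\subset[r_{0},+\infty)$, $r_{0}>0$, and $\lambda_{0}<0$, there exists a unique simple negative eigenvalue $\lambda_{0}$ and a nonzero eigenfunction $\chi\in D(\mathcal{L})$ with $\mathcal{L}\chi=\lambda_{0}\chi$. Setting $\lambda_{\star}:=\sqrt{-\lambda_{0}}>0$, the two vectors
\[
\Psi_{\pm} \;:=\; \begin{pmatrix} \chi \\ \pm\lambda_{\star}\,\chi \end{pmatrix}
\]
satisfy by direct computation $J\mathcal{E}\Psi_{\pm}=(\pm\lambda_{\star}\chi,\,-\lambda_{0}\chi)^{\top}=\pm\lambda_{\star}\Psi_{\pm}$. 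Thus $J\mathcal{E}$ has the real positive eigenvalue $\lambda_{\star}$ and the real negative eigenvalue $-\lambda_{\star}$, as claimed. Note that invertibility of $\mathcal{L}$ ensures $\lambda_{0}\neq 0$, so $\lambda_{\star}\neq 0$ and the construction is nondegenerate.

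The one point requiring care, and which I view as the main technical check rather than a genuine obstacle, is that $\Psi_{\pm}$ genuinely lies in $D(J\mathcal{E})$. In the concrete sine-Gordon setting of this paper the domain is $D(J\mathcal{E})=D_{\delta,Z}\times\mathcal{E}(\mathcal{Y})$: the first slot requires $H^{2}$-regularity together with the full $\delta$-type coupling at the vertex, while the second requires only the continuity condition defining $\mathcal{E}(\mathcal{Y})$. The first condition is met because $\chi\in D(\mathcal{L})=D_{\delta,Z}$ by $(S_{2})$; the second follows for free, since any element of $D_{\delta,Z}$ is continuous at $\nu=0$ and hence lies in $\mathcal{E}(\mathcal{Y})$. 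The $C_{0}$-semigroup hypothesis $(S_{1})$ is not used in the construction itself but guarantees that $\pm\lambda_{\star}$ are genuine eigenvalues of a well-posed linearized flow, so that the conclusion translates into the spectral instability interpretation used later to derive linear, and ultimately nonlinear, instability of the kink profiles.
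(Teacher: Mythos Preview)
Your argument is correct and considerably more elementary than the paper's. You exploit the explicit $2\times 2$ block structure of $J\mathcal{E}$ to reduce the eigenvalue problem $J\mathcal{E}\Psi=\lambda\Psi$ to $\mathcal{L}\mathbf{f}=-\lambda^{2}\mathbf{f}$, then take square roots of the negative eigenvalue $\lambda_{0}$ supplied by $(S_{3})$. The only nontrivial step is the domain verification, which you handle correctly: $\chi\in D(\mathcal{L})=D_{\delta,Z}$ and $D_{\delta,Z}\subset\mathcal{E}(\mathcal{Y})$, so $(\chi,\pm\lambda_{\star}\chi)\in D_{\delta,Z}\times\mathcal{E}(\mathcal{Y})=D(J\mathcal{E})$.

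The paper proceeds very differently, following Lopes and invoking Krasnosel'ski\u{\i}'s theorem on cones. It constructs the closed convex cone $K_{0}=\{z\in D(\mathcal{E}):\langle\mathcal{E}z,z\rangle\leq 0,\ \langle z,\Psi_{0}\rangle\geq 0\}$, shows via a density argument through $D(\mathcal{A}^{\infty})$ and conservation of $\langle\mathcal{E}z(t),z(t)\rangle$ that $K_{0}$ is invariant under the semigroup $S(t)$, checks that a linear functional dominates the norm on $K_{0}$, applies Krasnosel'ski\u{\i}'s result to the resolvent $(\mu I-J\mathcal{E})^{-1}$ to extract a real eigenvalue $\zeta\neq 0$, and finally uses the symmetry $\sigma(J\mathcal{E})=-\sigma(J\mathcal{E})$ together with $\sigma_{\mathrm{ess}}(J\mathcal{E})\subset i\mathbb{R}$ to obtain the pair $\pm\zeta$.

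What your route buys is transparency: for this particular $J$ and block-diagonal $\mathcal{E}$, the problem factors and only linear algebra is needed; hypothesis $(S_{1})$ is not used in the construction at all. What the paper's route buys is a template that does not rely on the clean factorization $-\lambda^{2}\in\sigma(\mathcal{L})$ and would survive in settings where $J$ or $\mathcal{E}$ lack this block structure; it also uses the semigroup hypothesis essentially. For the theorem as stated, with this specific $J$ and $\mathcal{E}$, your direct argument is entirely sufficient and arguably preferable.
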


The proof of Theorem \ref{crit} is based in ideas from Lopes \cite{Lopes} and from
the following result on closed convex cone (see Krasnoelskii \cite{Kra}, Chapter 2, section 2.2.6).

\begin{theorem}\label{Kra}
Let $K$ be a closed convex cone of a Hilbert space $(X,\|\cdot\|)$ such that there are a continuous linear functional $\Pi$ and a constant $a>0$ such that $\Pi(u)\geqq a\|u\|$ for any $u\in K$. If $T:X\to X$ is a bounded linear operator  that leaves $K$ invariant, then $T$ has an eigenvector in $K$ associated to a nonnegative eigenvalue.
\end{theorem}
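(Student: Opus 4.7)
The plan is to reduce the block eigenvalue problem to $\mathcal{L}$. Writing $\Psi=(u,v)$, the equation $J\mathcal{E}\Psi=\lambda\Psi$ unwraps to $v=\lambda u$ and $\mathcal{L}u=-\lambda^2 u$, so any real $\lambda\in\sigma_{\mathrm{pt}}(J\mathcal{E})$ corresponds to a negative real point $-\lambda^2\in\sigma(\mathcal{L})$. Under $(S_3)$ the only such point is the simple eigenvalue $\lambda_0<0$ with normalized eigenfunction $u_0\in D(\mathcal{L})$; thus the only candidate real eigenvalues of $J\mathcal{E}$ are $\pm\omega$ with $\omega:=\sqrt{-\lambda_0}>0$, and the symmetry of $\sigma_{\mathrm{pt}}(J\mathcal{E})$ with respect to the coordinate axes (noted in the paragraph after \eqref{stat6}) reduces matters to producing the positive one. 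Directly, $\Psi_\pm:=(u_0,\pm\omega u_0)$ lies in $D(J\mathcal{E})=D(\mathcal{L})\times\mathcal{E}(\mathcal{Y})$ since $D(\mathcal{L})\subset\mathcal{E}(\mathcal{Y})$, and a one-line computation
$$
J\mathcal{E}\Psi_\pm=(\pm\omega u_0,-\mathcal{L}u_0)=(\pm\omega u_0,-\lambda_0 u_0)=(\pm\omega u_0,\omega^2 u_0)=\pm\omega\Psi_\pm
$$
already yields both eigenvalues.

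To execute the same conclusion through the Krasnoselskii tool Theorem~\ref{Kra} in the Lopes spirit announced by the text, I would fix $t_0>0$ and apply it to $T:=S(t_0)$ on the Hilbert space $X:=\mathcal{E}(\mathcal{Y})\times L^2(\mathcal{Y})$. Use the $L^2$-orthogonal splitting $u=\alpha u_0+u^\perp$, $v=\beta u_0+v^\perp$. The two-dimensional subspace $\Span\{(u_0,0),(0,u_0)\}$ is invariant under $J\mathcal{E}$ and the restriction is the matrix $\bigl(\begin{smallmatrix}0&1\\ \omega^2&0\end{smallmatrix}\bigr)$ with eigenvalues $\pm\omega$ and eigenvectors $(1,\pm\omega)^\top$; on its $L^2$-orthogonal complement, $(S_3)$ yields $\mathcal{L}\geq r_0 I>0$, so that piece of $S(t)$ is unitarily equivalent to a \emph{wave}-type group with purely imaginary spectrum. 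I would then define the closed convex cone
$$
K=\{\Psi=(u,v)\in X:\;\beta\geq\omega\alpha\geq 0,\;\alpha\geq C\|(u^\perp,v^\perp)\|_X\}
$$
with $C>0$ to be fixed, together with the continuous linear functional $\Pi(\Psi):=\alpha$; by construction $\Pi(\Psi)\geq a\|\Psi\|_X$ on $K$ for some $a=a(C)>0$, fulfilling one hypothesis of Theorem~\ref{Kra}.

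The main obstacle I expect is verifying the semigroup invariance $S(t_0)K\subset K$. On the hyperbolic two-dimensional piece, $e^{t_0 A}$ expands $(1,\omega)^\top$ by the factor $e^{\omega t_0}$ and contracts $(1,-\omega)^\top$ by $e^{-\omega t_0}$, so the inequalities $\beta\geq\omega\alpha\geq 0$ are strictly amplified; on the wave-like complement, $\|(u^\perp(t_0),v^\perp(t_0))\|_X$ is controlled by its initial value. Choosing $t_0$ large enough in terms of $C$ makes the amplification dominate the perpendicular contribution, and $S(t_0)K\subset K$ follows. Theorem~\ref{Kra} then produces a nontrivial $\Psi_\star\in K$ with $S(t_0)\Psi_\star=\mu\Psi_\star$, $\mu\geq 0$; the dominance of the hyperbolic direction forces $\mu=e^{\omega t_0}>0$ and $\Psi_\star$ proportional to $(u_0,\omega u_0)$, whence $\omega\in\sigma_{\mathrm{pt}}(J\mathcal{E})$. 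The axis symmetry (or equivalently, a mirror cone with $\beta\leq-\omega\alpha\leq 0$) then supplies $-\omega\in\sigma_{\mathrm{pt}}(J\mathcal{E})$ as well.
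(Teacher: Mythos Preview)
Your proposal does not address Theorem~\ref{Kra}. That theorem is an abstract statement about an arbitrary bounded linear operator $T$ on a Hilbert space leaving a closed convex cone $K$ invariant; the paper does not prove it at all but simply cites it from Krasnosel\cprime ski\u{\i}'s monograph. What you have written is instead a proof attempt for Theorem~\ref{crit}, the linear instability criterion that \emph{applies} Theorem~\ref{Kra} as a black box. A proof of Theorem~\ref{Kra} would have to work with a generic $T$ and a generic cone satisfying the hyperplane condition $\Pi(u)\geq a\|u\|$, and would typically go through a Schauder-type fixed-point argument on a bounded convex slice of $K$; nothing in your write-up touches that.

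As an aside, your opening paragraph does contain a valid and much more direct proof of Theorem~\ref{crit} than the one in the paper: since $J\mathcal{E}(u,v)^\top=(v,-\mathcal{L}u)^\top$, the eigenvalue problem decouples to $\mathcal{L}u=-\lambda^2 u$, and the simple negative eigenvalue $\lambda_0$ of $\mathcal{L}$ immediately yields the pair $\pm\sqrt{-\lambda_0}\in\sigma_{\mathrm{pt}}(J\mathcal{E})$ with explicit eigenvectors $(u_0,\pm\sqrt{-\lambda_0}\,u_0)\in D(\mathcal{L})\times\mathcal{E}(\mathcal{Y})$. This bypasses the whole Krasnosel\cprime ski\u{\i} cone machinery the paper uses (the cone $K_0=\{z:\langle\mathcal{E}z,z\rangle\leq 0,\ \langle z,\Psi_0\rangle\geq 0\}$, invariance under the semigroup, and the resolvent integral). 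Your second, cone-based sketch toward the same Theorem~\ref{crit} is rougher: the invariance $S(t_0)K\subset K$ is only heuristically argued, and passing from an eigenvalue of $S(t_0)$ back to a point eigenvalue of its generator requires an extra step. But again, none of this is a proof of the stated Theorem~\ref{Kra}.
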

\begin{proof}[Proof of Theorem \ref{crit}]
 From assumption $(S_1)$ we have  that $J\mathcal E$ is the infinitesimal generator of a $C_0$-semigroup $\{S(t)\}_{t\geqq 0}$.  For $\psi_0\in D(\mathcal J)$, $\|\psi_0\|=1$ and $\lambda_0<0$ such that $\mathcal L\psi_0=\lambda_0 \psi_0$, we consider $\Psi_0=(\psi_0,0,0,0)^\top$ and  the following nonempty closed convex cone 
 \begin{equation*}
K_0=\{z\in D(\mathcal E): \langle \mathcal Ez, z\rangle \leqq 0,\;\;\text{and}\;\; \langle z, \Psi_0\rangle\geqq 0 \}.
\end{equation*}
Next, we see that $K_0$ is invariant by the semigroup $S(t)$.  Indeed, we will use a density argument based in the existence of a core for $\mathcal A\equiv J\mathcal E$. Thus, from semi-group theory follows that the space
 $$
 D(\mathcal A^\infty)=\bigcap_{n\in \mathbb N} D(\mathcal A^n)
 $$
 with $D(\mathcal A^n)=\{f\in D(\mathcal A^{n-1}): \mathcal A^{n-1}f\in D(\mathcal A)\}$, result to be dense in $L^2(\mathcal G)$ and it is a $\{S(t)\}_{t\geqq 0}$-invariant subspace of $D(\mathcal A)$. Thus, $D(\mathcal A^\infty)$ is a core for $\mathcal A$.  Therefore
is enough to consider the case $f\in K_0\cap D(\mathcal A^\infty)$ and so the Hamiltonian equation
 \begin{equation}
\left\{ \begin{array}{ll}
\dot{z}=\mathcal A z\\
z(0)= f
\end{array} \right.
 \end{equation}
 has solution $z(t)=S(t)f\in D(\mathcal A^\infty)$ and therefore from the self-adjoint property of $\mathcal E$ and the skew-symmetric property of $J$ we obtain
 $$
 \frac{d}{dt}  \langle \mathcal E z(t), z(t)\rangle=  \langle \mathcal EJ\mathcal Ez(t), z(t)\rangle +  \langle \mathcal Ez(t), J\mathcal E z(t)\rangle =0,
 $$
 then for all $t$, $ \langle \mathcal Ez(t), z(t)\rangle= \langle \mathcal Ef, f\rangle \leqq 0$. Next, we suppose $\langle f, \Psi_0\rangle> 0$ and that there is $t_0$ such that $\langle S(t_0) f, \Psi_0\rangle<0$. Then by continuity of the flow $t\to S(t)f$ there is $\tau\in (0, t_0)$ with $\langle S(\tau)f, \Psi_0\rangle=0$. Now, from  assumption $(S_4)$
  we have from the spectral theorem applied to the self-adjoint operator $\mathcal E$ (more specifically to $\mathcal L$), the orthogonal decomposition for $f_\tau=S(\tau)f,$
  $$
 f_\tau =\sum_{i=1}^m a_i h_i +g,\quad g\bot h_i, \;\; \text{for all}\;\; i,
 $$
 where $\mathcal E h_i=\lambda_i h_i$, $\|h_i\|=1$, $\lambda_i\in \sigma_{\mathrm{pt}}(\mathcal E)$ with $\lambda_i\geqq \eta$, and $\langle \mathcal Eg,g\rangle\geqq \theta \|g\|^2$, $\theta>0$.
 Therefore,
 $$
 0\geqq  \langle \mathcal E f_\tau, f_\tau\rangle\geqq \sum_{i=1}^m a^2_i\lambda_i +\theta \|g\|^2\geqq \eta \sum_{i=1}^m a^2_i+\theta \|g\|^2\geqq 0.
 $$
 Thus, it follows $g=0$ and $a_i=0$ for $i$. Therefore, $S(\tau)f=0$ and since $S(t)$ is a semigroup we obtain $f=0$ and so $\langle f, \Psi_0\rangle= 0$ which is a contradiction. Now we suppose $\langle f, \Psi_0\rangle= 0$, then the former analysis shows  $f=0$ and so $S(t)f\equiv 0$ for all $t$. It shows the invariance of $K_0$ by $S(t)$. Then, for $\mu$ large we obtain from semigroup's theory the integral representation of the resolvent  
 \begin{equation}\label{lapla}
 Tz=(\mu I-\mathcal A)^{-1}(z)=\int_0^\infty e^{-\mu t} S(t)z dt
  \end{equation}
 and it also leaves $K_0$ invariant. Next, for  $\Pi: L^2(\mathcal G)\to \mathbb R$ defined by $\Pi(z)= \langle z, \Psi_0 \rangle$ we will see that there is $a>0$ such that $\Pi(z)\geqq a\|z\|$ for any $z\in K_0$. Indeed, suppose for $\|g\|=1$, $\langle g, \Psi_0 \rangle=\gamma>0$  and  $\langle \mathcal E g, g\rangle\leqq 0$. Since $ \ker (\Pi)$ is a hyperplane we obtain $g=z+\gamma \Psi_0$ with $\langle z, \Psi_0 \rangle=0$.  So, $-\lambda_0\gamma^2\geqq  \langle \mathcal E z, z\rangle$. Now, from the orthogonal decomposition 
  $
 z =\sum_{i=1}^m  \langle z, h_i \rangle h_i +g,\quad g\bot h_i, \;\; \text{for all}\;\; i,
 $
follows for $\eta, \theta>0$,
$\langle \mathcal E z, z\rangle= \min \{\eta, \theta\}(1-\gamma^2)$. Then,
$$
\langle g, \Psi_0 \rangle=\gamma\geqq \sqrt{\frac{ \min \{\eta, \theta\}}{-\lambda_0+ \min \{\eta, \theta\}}}\equiv a.
$$
Therefore,  by the analysis above and Theorem \ref{Kra}, we conclude that there exist an $\alpha\geqq 0$ and a nonzero element $\omega_0\in K_0$ such that $
 (\mu I -\mathcal A)^{-1}(\omega_0)=\alpha \omega_0$. It is immediate that $\alpha>0$ and so $J\mathcal E\omega_0 = \zeta \omega_0$ with $\zeta=\frac{\mu \alpha-1}{\alpha}$. Next we see that $\zeta\neq 0$. Suppose that $\zeta= 0$, then  the injectivity of $J$ and $\mathcal E$ implies 
 $\omega_0=0$, which is a contradiction. Then, $J\mathcal E$ has a nonzero real eigenvalue $\zeta$.
 
 Now, we have $\sigma(J\mathcal E)=\sigma((J\mathcal E)^*)=-\sigma(\mathcal E J)=
 -\sigma(\mathcal E J \mathcal E \mathcal E^{-1})=-\sigma(J\mathcal E)$ and so $-\zeta$ also belongs to  $\sigma(J\mathcal E)$. Thus from Theorem 5.8  of \cite{GrilSha90}, the essential spectrum of $J\mathcal E$ lies on the imaginary axis and therefore $-\zeta$ is an eigenvalue of $J\mathcal E$ and this finishes the proof.
\end{proof}

\section{Instability of stationary solutions of kink type  for the sine-Gordon equation with $\delta$-interaction}
\label{secInst}

In this section apply the linear instability criterion (Theorem \ref{crit} above) to the case of stationary solutions of kink type determined by a $\delta$-interaction type at the vertex  $\nu=0$. First we examine the structure of such stationary wave solutions.

\subsection{Stationary solutions for the sine-Gordon equation with $\delta$-interaction}
\label{secprofiles}

Next we consider the sine-Gordon model \eqref{sg2} on a  $\mathcal Y$-junction type of the form $\mathcal Y=(-\infty, 0)\cup (0, +\infty)\cup (0, +\infty)$. The stationary solutions will be of the  type in \eqref{trav20} and satisfying the system in \eqref{trav22}. Suppose that the profile $\Phi =(\phi_j)_{j=1}^3$ belongs to the domain $ D(\mathcal L_Z)$ of  $\delta$-type defined in \eqref{2trav23}. Let us rewrite the family of stationary profiles \eqref{trav22} here for convenience:
\begin{equation}
\label{trav24}
\begin{split}
&\phi_1(x)=4  \arctan \Big(e^{\frac{1}{ c_1 }(x-a_1)}\Big),\;\;\;\;\;x<0,\\
& \phi_i(x)=4  \arctan \Big(e^{-\frac{1}{ c_i }(x-a_i)}\Big), \;\;\;\; x>0,\;\;i=2,3.
\end{split}
\end{equation}
Then, clearly, from the continuity condition at the vertex $\nu=0$ we obtain $\frac{1}{ c_1 }a_1=-\frac{1}{ c_2 }a_2=-\frac{1}{ c_3 }a_3$. The Kirchhoff-type  condition in \eqref{2trav23} implies the following relation for $a_1$,
\begin{equation}\label{trav25}
-\frac{e^{-\frac{a_1}{ c_1 }}}{1+e^{-\frac{2a_1}{ c_1 }}}\sum\limits_{j=1}^3  c_j =Z \arctan \Big(e^{-\frac{a_1}{ c_1 }}\Big).
\end{equation}
From \eqref{trav25} we deduce that $Z<0$. Next, from the behavior of the function
\begin{equation}\label{trav26}
f(y)=\frac{1+y^2}{y}  \arctan (y), \;\;y\geqq 0
\end{equation}
we obtain that $Z\in (-\sum_{j=1}^3  c_j , 0)$.  Moreover, we have the following specific behavior of the profiles $ \phi_i$:
\begin{enumerate}
\item[1)] for  $Z\in (-\sum_{j=1}^3  c_j , -\frac{2}{\pi} \sum_{j=1}^3  c_j )$ we obtain $a_1>0$; therefore $a_2, a_3<0$, $ \phi_i'' >0$ for every $i$, and $\phi_1' >0$, $ \phi_j' <0$ ($j=1,2$). Thus, the profile of $(\phi_j)_{j=1}^3$  should look similar to Figure \ref{figKTail} above (tail-profile). Moreover, $\phi_i\in (0, \pi)$, $i=1,2, 3$.

\item[2)] for   $Z\in (-\frac{2}{\pi} \sum_{j=1}^3  c_j , 0)$ we obtain $a_1<0$; therefore $a_2, a_3>0$, and $\phi_i''(a_i) =0$,  $i=1,2, 3$. We also have $\phi_1' >0$, $ \phi_i' <0$ ($i=1,2$). Thus, the profile of $(\phi_j)_{j=1}^3$  should look similar to Figure \ref{figKBump} below (bump-profile). Moreover, $\phi_i\in (0, \eta_0)$, $i=1,2, 3$, $\eta_0=4  \arctan \Big(e^{-\frac{1}{ c_1 }a_1}\Big)>\pi$,

\item[3)]  the case  $Z=-\frac{2}{\pi} \sum_{j=1}^3  c_j $ implies $a_1=0=a_2=a_3$; therefore,  $\phi_i(0) =\pi$ and   $\phi_i''(0) =0$, $i=1,2, 3$. In this case, we have a ``smooth'' profile around the vertex $\nu=0$ (see Figure \ref{figKSmooth} below). 
\end{enumerate}

\begin{figure}[t]
\begin{center}
\subfigure[$Z\in (-\sum_{j=1}^3  c_j , -\frac{2}{\pi} \sum_{j=1}^3  c_j )$]{\label{figKTail}\includegraphics[scale=.42, clip=true]{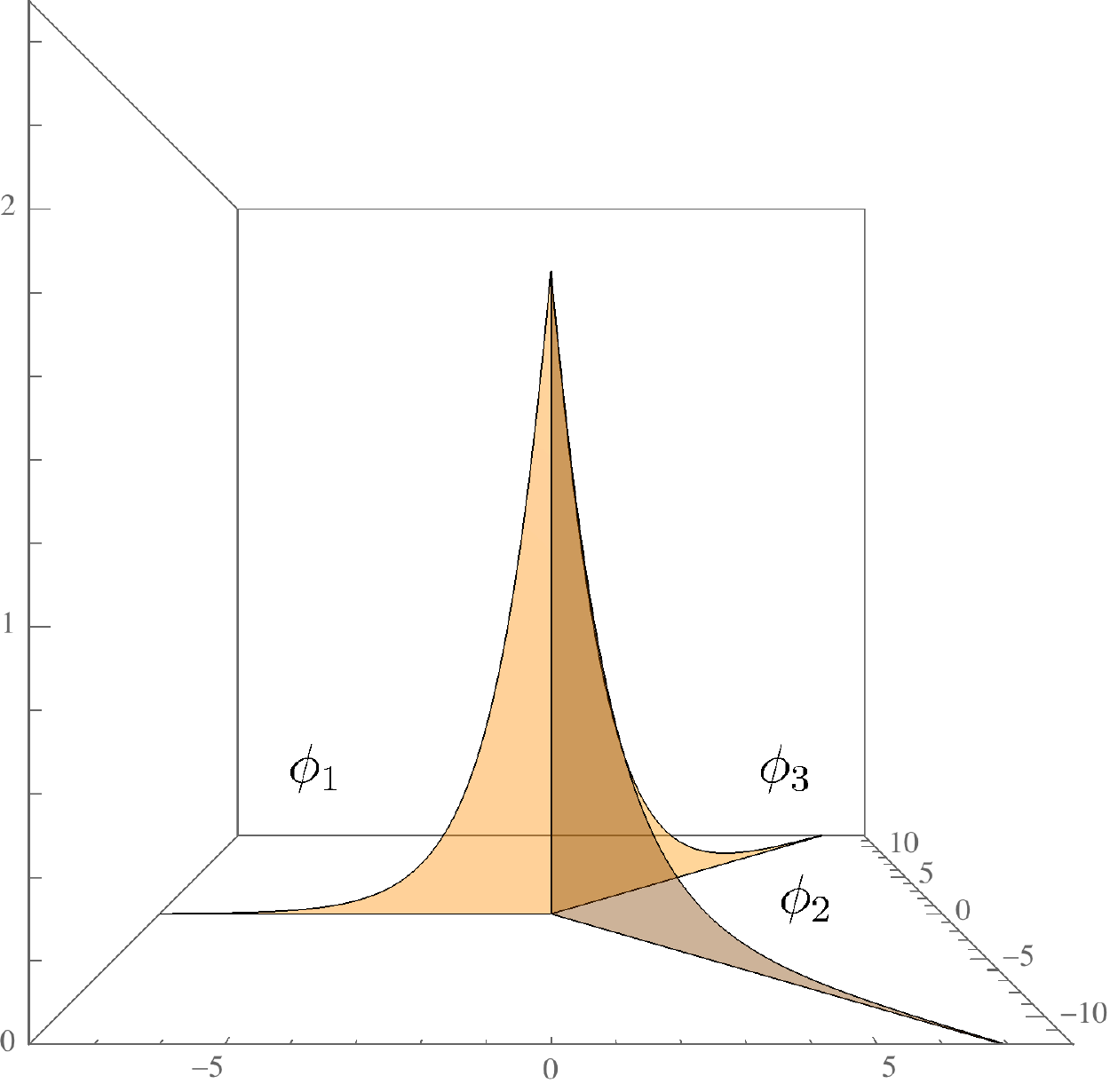}}
\subfigure[$Z\in (-\frac{2}{\pi} \sum_{j=1}^3  c_j , 0)$]{\label{figKBump}\includegraphics[scale=.42, clip=true]{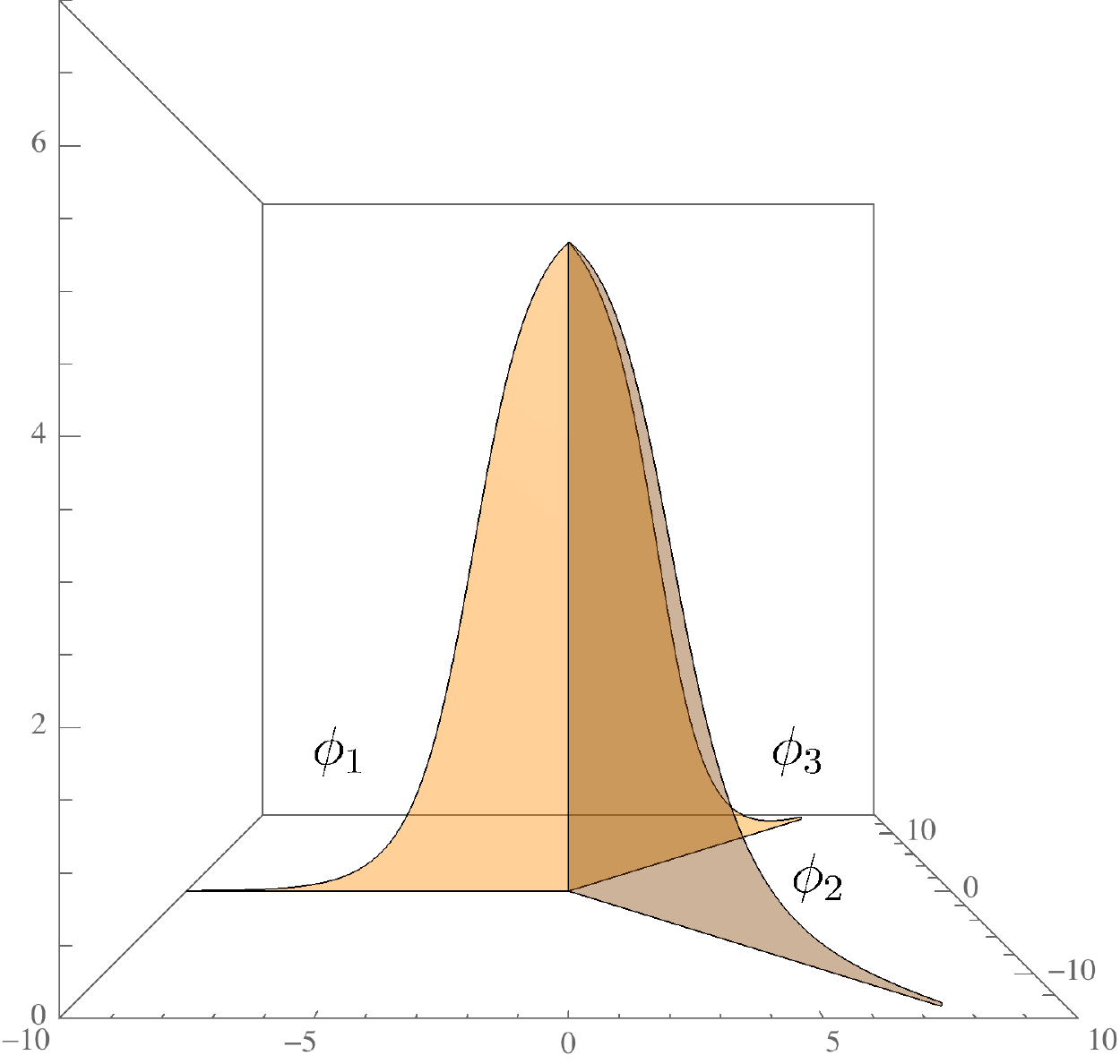}}
\subfigure[$Z=-\frac{2}{\pi} \sum_{j=1}^3  c_j $]{\label{figKSmooth}\includegraphics[scale=.42, clip=true]{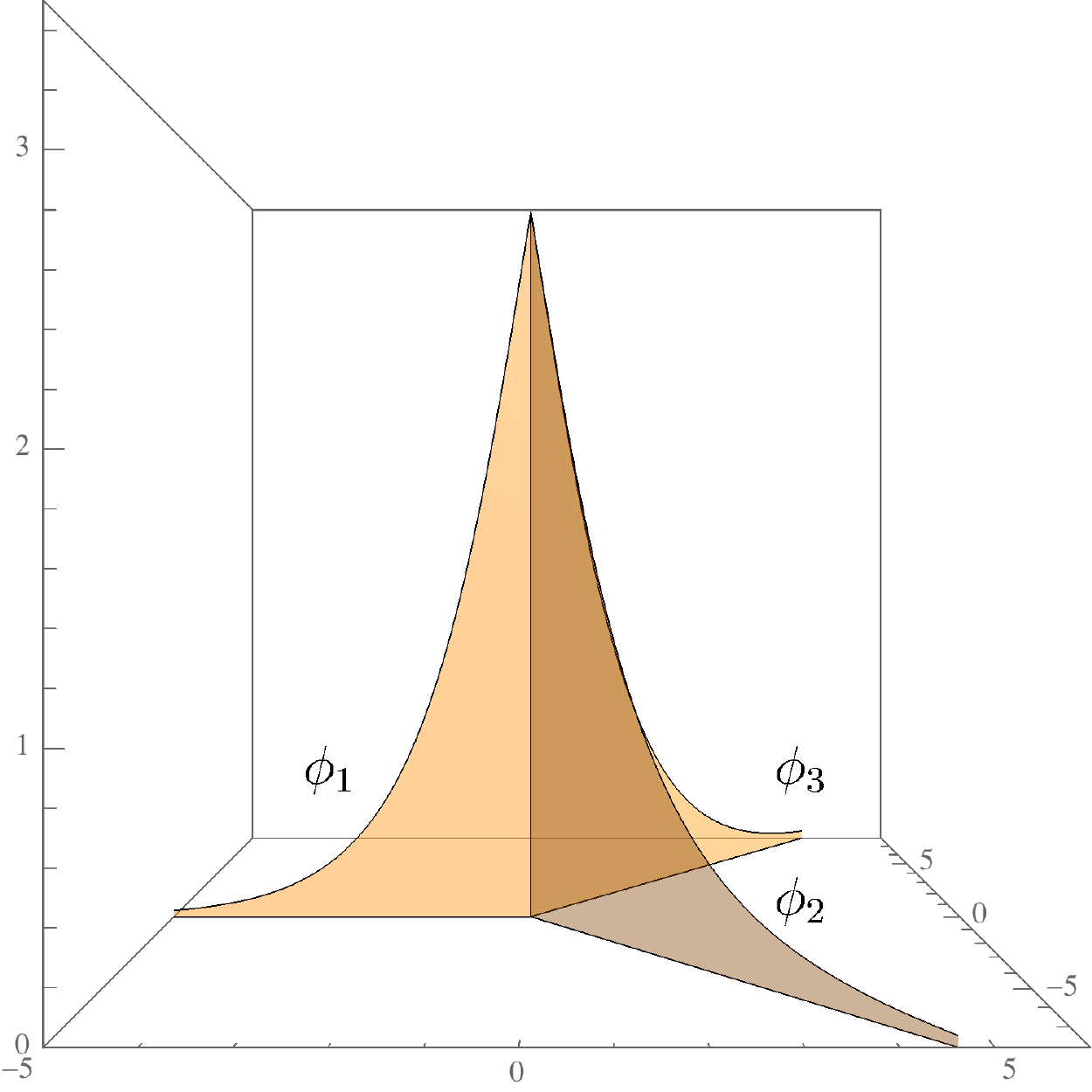}}
\end{center}
\caption{\small{Plots of stationary solutions \eqref{trav24} in the case where $c_j = 1$ for all $j=1,2,3$, for different values of $Z\in  (-\sum_{j=1}^3  c_j , 0) = (-3,0)$. Panel (a) shows the stationary profile solution (``tail'' configuration) for the value $Z = -5/2 \in (-3,-6/\pi)$. Panel (b) shows the profile of ``bump'' type for the value $Z = -1/6 \in (-6/\pi,0)$. Panel (c) shows the ``smooth'' profile solution when $Z = -6/\pi$ (color online).}}\label{figKsolitons}
\end{figure}

We shall see that the stability study of the profiles given in \eqref{trav24} will  require exactly the three cases considered above. The main stability result for the stationary profiles
$\Phi_Z=(\phi_1, \phi_2, \phi_3, 0,0,0)$ with $\phi_j$ defined in \eqref{trav24}-\eqref{trav25} is the following.

\begin{theorem}
\label{1main}
Let $Z\in  (-\sum_{j=1}^3  c_j , 0)$ and the smooth family of stationary profiles $Z\to \Phi_Z$ defined in \eqref{trav24}-\eqref{trav25}. Then $\Phi_Z$ is spectrally unstable for the sine-Gordon model \eqref{stat1}.
\end{theorem}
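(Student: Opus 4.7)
The plan is to verify the three hypotheses of the abstract instability criterion, Theorem \ref{crit}, for the Hamiltonian operator $J\mathcal{E}$ associated with the family $\Phi_Z$, where $\mathcal{E} = \operatorname{diag}(\mathcal{L}_Z, I_3)$. Hypothesis $(S_1)$ follows from Theorem \ref{cauchy1} combined with the bounded-perturbation theorem for $C_0$-semigroups: one writes $J\mathcal{E} = JE + \mathcal{T}$, where $\mathcal{T}$ acts by multiplication by $-\cos(\phi_j)$ and is bounded on $H^1(\mathcal{Y}) \times L^2(\mathcal{Y})$. Hypothesis $(S_2)$ reduces to self-adjointness of $\mathcal{L}_Z$ on $D(\mathcal{L}_Z)$, which follows from Appendix \ref{secApp}, since the bounded multiplication operators $\cos(\phi_j)$ do not alter the domain or the deficiency indices of the free extension $\mathcal{F}_Z$. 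The real work sits in $(S_3)$.

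For the essential spectrum, I would mimic step (4) of the proof of Theorem \ref{spectrum}: since $\phi_j \to 0$ exponentially at $\pm\infty$ by \eqref{bcinfty}, the bounded perturbation $\cos(\phi_j) - 1$ decays exponentially, and a Weyl sequence supported far out on a single edge yields $\sigma_{\mathrm{ess}}(\mathcal{L}_Z) = [1, +\infty)$; hence only finitely many eigenvalues can appear below $1$. To establish triviality of $\ker \mathcal{L}_Z$, note that on each edge $\phi_j'$ is a non-vanishing $L^2$ solution of $-c_j^2 w'' + \cos(\phi_j) w = 0$, and the second independent solution obtained by reduction of order grows at infinity; hence any element of $\ker \mathcal{L}_Z$ must be of the form $v_j = \beta_j \phi_j'$ on $E_j$. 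Inserting this into the continuity and $\delta$-flux conditions at $\nu = 0$ and simplifying via the static equation $-c_j^2 \phi_j'' + \sin \phi_j = 0$ reduces the kernel problem to a homogeneous scalar equation in $\beta_1$, whose coefficient, after use of the defining relation \eqref{trav25} for $a_1$, is nonzero on the whole interval $(-\sum_j c_j, 0)$. Combined with the essential spectrum location, this gives invertibility of $\mathcal{L}_Z$.

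The decisive step, split into Propositions \ref{main3}--\ref{main4}, is showing $n(\mathcal{L}_Z) = 1$. For the upper bound I would compare $\mathcal{L}_Z$ with the Dirichlet extension $\mathcal{L}_D$ of the underlying symmetric operator $\mathcal{M}_Z$; the latter inherits deficiency indices $(1,1)$ from $\mathcal{M}$ of Appendix \ref{secApp}, since $\cos(\phi_j)$ is a bounded symmetric perturbation. On each edge the positive solution $|\phi_j'|$ of the zero-energy equation enables the Jacobi factorization $u_j = |\phi_j'| w_j$, whence integration by parts gives
\[
\langle \mathcal{L}_D \mathbf{u}, \mathbf{u}\rangle = \sum_{j=1}^3 c_j^2 \int_{E_j} (\phi_j')^2 (w_j')^2\, dx \geq 0,
\]
so $n(\mathcal{L}_D) = 0$. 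Proposition \ref{semibounded} then yields $n(\mathcal{L}_Z) \leq n(\mathcal{L}_D) + 1 = 1$. For the lower bound, I would produce a test vector $\mathbf{v}_Z \in D(\mathcal{L}_Z)$ with $\langle \mathcal{L}_Z \mathbf{v}_Z, \mathbf{v}_Z\rangle < 0$, constructed by modifying $\Phi_Z'$ with a smooth correction that restores continuity at the vertex and enforces the $\delta$-flux rule; integration by parts, combined with the identity $\mathcal{L}_Z \Phi_Z' = 0$ on the interiors, converts the quadratic form into a boundary contribution at $\nu = 0$ whose sign is controlled by $Z < 0$ and by $\sin \phi_0 > 0$ (with $\phi_0 = \phi_j(0)$).

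The main obstacle will be sustaining the lower bound uniformly across the three qualitatively distinct regimes of $\Phi_Z$ — tail, smooth, and bump — identified in Subsection \ref{secprofiles}, where the profile geometry changes abruptly. My intention is to invoke analytic perturbation theory for the isolated negative eigenvalue of $\mathcal{L}_Z$ as $Z$ varies analytically: together with analyticity of $Z \mapsto \Phi_Z$, the non-intersection of the eigenvalue curve with the essential spectrum $[1, \infty)$ and with zero (by the invertibility established above) ensures that a negative eigenvalue, once produced at a single reference value of $Z$ in each regime, persists on the whole interval. With $(S_3)$ thus verified, Theorem \ref{crit} immediately delivers a pair of real eigenvalues $\pm \zeta$, $\zeta > 0$, of $J\mathcal{E}$, yielding the claimed spectral instability of $\Phi_Z$.
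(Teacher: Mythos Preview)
Your overall architecture matches the paper's: reduce to Theorem~\ref{crit}, dispatch $(S_1)$ and $(S_2)$ by bounded perturbation and Proposition~\ref{M}, and concentrate on $(S_3)$. The kernel argument, the Weyl computation of $\sigma_{\mathrm{ess}}(\mathcal{L}_Z)=[1,\infty)$, and the upper bound $n(\mathcal{L}_Z)\le 1$ via the Jacobi factorization plus extension theory are the same in substance; the paper phrases the last point through the nonnegative symmetric operator $\mathcal{M}_0$ of~\eqref{spec6} rather than the Dirichlet extension, but either route gives the bound via Proposition~\ref{semibounded}.

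The genuine difference is in the lower bound $n(\mathcal{L}_Z)\ge 1$. The paper's test vector is $\Phi$ itself (already in $D(\mathcal{L}_Z)$), and $\langle\mathcal{L}_Z\Phi,\Phi\rangle=\sum_j\int[\cos(\phi_j)\phi_j-\sin\phi_j]\phi_j\,dx<0$ follows from $\theta\cos\theta\le\sin\theta$ on $[0,\pi]$; this works only in the tail and smooth regimes (see Remark~\ref{noway}), and the bump regime is then covered by the analytic continuation of Proposition~\ref{main4}. Your $\Phi'$-based test vector is a different and in fact sharper idea, but two points need adjustment. First, work in the form domain $\mathcal{E}(\mathcal{Y})$, not $D(\mathcal{L}_Z)$: the natural candidate $\mathbf{v}=(\alpha_j\phi_j')$ with $\alpha_j=-\alpha_1 c_j/c_1$ for $j=2,3$ restores continuity but not the flux rule, and forcing the flux rule by a smooth correction destroys the interior identity $L_j(\alpha_j\phi_j')=0$ that collapses $\mathcal{Q}(\mathbf v)$ to a pure boundary term. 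Second, the sign is not governed by ``$\sin\phi_0>0$'' (which actually fails throughout the bump regime). Integration by parts gives
\[
\mathcal{Q}(\mathbf v)=\alpha_1^2\,(\phi_1'(0-))^2\Big[\tanh(a_1/c_1)\,\textstyle\sum_j c_j+Z\Big],
\]
and combining with \eqref{trav25} the bracket equals a positive multiple of $h(y)=(1-y^2)\arctan y - y$ with $y=e^{-a_1/c_1}$; since $h'(y)<0$ and $h(0)=0$, one has $h(y)<0$ for every $y>0$, hence $\mathcal{Q}(\mathbf v)<0$ for all $Z\in(-\sum_j c_j,0)$. This is the very function that already appeared in your kernel computation, and it yields $n(\mathcal{L}_Z)=1$ uniformly, making the perturbation step superfluous---a cleaner outcome than the paper's split into Propositions~\ref{main3} and~\ref{main4}.
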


The proof of spectral instability result in Theorem \ref{1main} will be a consequence of Theorem \ref{crit}. Thus by Theorem \ref{cauchy1} we only need to verify the assumptions $(S_1)$, $(S_2)$ and $(S_3)$ associated to the operator  in \eqref{stat5} with domain $D(\mathcal L_Z)$ of $\delta$-type defined in \eqref{domainZ}.  That will be the focus of the following subsection. 

It is widely known  that the spectral instability of a specific traveling wave solution of an evolution type model is  a key prerequisite to show their nonlinear instability property (see \cite{GrilSha90, Lopes, ShaStr00} and references therein). Thus we have the following definition.
\begin{definition}\label{nonstab}
A stationary vector solution $\Phi \in D(\mathcal E)$ is said to be \textit{nonlinearly unstable} in $X\equiv H^1(\mathcal Y)\times L^2(\mathcal Y)$-norm for model sine-Gordon if there is $\epsilon>0$, such that for every $\delta>0$ there exist an initial datum $\bold w_0$ with $\|\Phi -\bold w_0\|_X<\delta$ and an instant $t_0=t_0(\bold w_0)$ such that $\|\bold w(t_0)-\Phi \|_X>\epsilon$, where $\bold w=\bold w(t)$ is the solution of the sine-Gordon model with $\bold w(0)=\bold w_0$.
\end{definition}
Therefore, the nonlinear instability property of $\Phi_Z$ will be a consequence of Theorem \ref{well0} and the approach by Henry {\it et al.} in \cite{HPW82} (see Remark \ref{remnolineal} below).

\subsection{Spectral study in the case of a $\delta$-interaction}
\label{secspecstudy}

In this section we study the structure of the kernel and Morse index of the following diagonal-matrix Schr\"odinger operator
\begin{equation}\label{spec1}
\mathcal{L}_Z =\Big (\Big(-c_j^2\frac{d^2}{dx^2}+ \cos (\phi_j)
\Big)\delta_{j,k} \Big ),\quad 1\leqq j, k\leqq 3
\end{equation}
with domain $D(\mathcal{L}_Z )$ defined in \eqref{domainZ} and $\phi_j$ given by \eqref{trav24}-\eqref{trav25}. From Proposition \ref{M} in Appendix we have that $(\mathcal{L}_Z, D(\mathcal{L}_Z))$ represents a family of self-adjoint operators.

\begin{proposition}\label{main2}
Let $Z\in  (-\sum_{j=1}^3  c_j , 0)$. Then $ \ker ( \mathcal{L}_Z )=\{\mathbf{0}\}$.
\end{proposition}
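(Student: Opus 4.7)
The plan is as follows. I will first show that any element of $\ker(\mathcal{L}_Z)$ must, on each edge, be a scalar multiple of the corresponding component of $\Phi_Z'=(\phi_j')_{j=1}^3$; then I will force these scalar multipliers to vanish by combining the $\delta$-boundary conditions at the vertex with the profile-existence equation \eqref{trav25}.

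First, differentiating the profile equation \eqref{trav21} yields $-c_j^2(\phi_j')''+\cos(\phi_j)\phi_j'=0$ on each edge $E_j$, so $\phi_j'$ solves the scalar ODE that each component of a kernel element must satisfy. Since $\phi_j'(x)$ is proportional to $\sech((x-a_j)/c_j)$, it lies in $H^2(E_j)$ and is nowhere zero on $E_j$. By reduction of order, the second linearly independent solution grows like $e^{|x|/c_j}$ as $|x|\to\infty$ (the potential $\cos(\phi_j)$ tending to $1$ by \eqref{bcinfty}); hence it does not belong to $H^2(E_j)$. Therefore any $\mathbf{v}=(v_j)_{j=1}^3\in H^2(\mathcal{Y})$ with $\mathcal{L}_Z\mathbf{v}=\mathbf{0}$ must be of the form $v_j=\alpha_j\phi_j'$ on $E_j$ for real constants $\alpha_1,\alpha_2,\alpha_3$.

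Second, I use the first integral obtained by multiplying \eqref{trav21} by $\phi_j'$ and invoking \eqref{bcinfty}, namely $\tfrac{c_j^2}{2}(\phi_j')^2=1-\cos\phi_j=2\sin^2(\phi_j/2)$, which at $\nu=0$ gives
\[
\phi_1'(0-)=\tfrac{2}{c_1}\sin(\phi_0/2),\qquad \phi_j'(0+)=-\tfrac{2}{c_j}\sin(\phi_0/2)\;\;(j=2,3),
\]
where $\phi_0:=\phi_1(0-)=\phi_j(0+)\in(0,2\pi)$, so $\sin(\phi_0/2)>0$. The continuity condition $v_1(0-)=v_2(0+)=v_3(0+)$ then forces $\alpha_1/c_1=-\alpha_2/c_2=-\alpha_3/c_3=:\beta$. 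Substituting $v_j'=\alpha_j\phi_j''$ together with $\phi_j''(0)=\sin(\phi_0)/c_j^2=2\sin(\phi_0/2)\cos(\phi_0/2)/c_j^2$ into the Kirchhoff-type condition from \eqref{2trav23} and dividing by $2\beta\sin(\phi_0/2)$ (assuming $\beta\neq 0$) yields the single necessary condition
\[
-\cos(\phi_0/2)\sum_{j=1}^3 c_j \;=\; Z.
\]

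Finally, I show this identity is inconsistent with \eqref{trav25}. Setting $y:=e^{-a_1/c_1}>0$ and $C:=\sum_j c_j$, one has $\phi_0/2=2\arctan(y)$, so $\cos(\phi_0/2)=(1-y^2)/(1+y^2)$, and \eqref{trav25} reads $Z\arctan(y)=-Cy/(1+y^2)$. Equating the two expressions for $Z$ produces
\[
y=(1-y^2)\arctan(y).
\]
But the function $g(y):=y-(1-y^2)\arctan(y)$ satisfies $g(0)=0$ and
\[
g'(y)=\frac{2y^2}{1+y^2}+2y\arctan(y)>0 \quad\text{for } y>0,
\]
so $g(y)>0$ on $(0,\infty)$ and no positive $y$ solves the displayed identity. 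Hence $\beta=0$, which gives $\mathbf{v}\equiv\mathbf{0}$ and $\ker(\mathcal{L}_Z)=\{\mathbf{0}\}$. The main subtlety is the last algebraic step: the identity forced by the vertex conditions must be shown to be incompatible with the profile existence equation across the entire range $Z\in(-C,0)$, which is accomplished by the strict positivity of $g$ on $(0,\infty)$.
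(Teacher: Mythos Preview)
Your proof is correct and follows the same overall strategy as the paper: first reduce any kernel element to $v_j=\alpha_j\phi_j'$ via the half-line Sturm--Liouville structure, then use the $\delta$-vertex conditions together with the profile relation \eqref{trav25} to force the coefficients to vanish. The difference is organizational. The paper splits into three cases according to the sign of $a_1$ (equivalently, the three subranges of $Z$): for $Z=-\tfrac{2}{\pi}\sum_j c_j$ one has $\phi_j''(0)=0$ directly; for $Z\in(-\tfrac{2}{\pi}\sum_j c_j,0)$ a pure sign comparison ($\phi_1''(0-)<0$, $\phi_1'(0-)>0$, $Z<0$) gives the contradiction; only for $Z\in(-\sum_j c_j,-\tfrac{2}{\pi}\sum_j c_j)$ does the paper derive the identity $(1-y^2)\arctan y=y$ and rule it out on $(0,1)$. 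You instead derive this identity uniformly for all $Z$ in the range and exclude it on the whole half-line $(0,\infty)$ via the strict positivity of $g'(y)=\tfrac{2y^2}{1+y^2}+2y\arctan y$. Your route is a bit more streamlined (no case split), while the paper's case-by-case treatment trades that for more elementary sign arguments in two of the three subcases.
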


\begin{proof} Let $\bold{u}=(u_1, u_2, u_3)\in D(\mathcal{L}_Z )$ and $\mathcal{L}_Z \bold{u}=\bold{0}$. Since $-c_j^2\frac{d^2}{dx^2}\phi'_j+ \cos (\phi_j)\phi'_j=0$, $j=1,2,3$, we obtain from Sturm-Liouville theory on half-lines that
\begin{equation}\label{spec2}
u_1(x)=\alpha_1\phi'_1(x), \;\;x<0,\quad u_j(x)=\alpha_j\phi'_j(x), \;\;x>0,\;\; j=2,3,
\end{equation}
with $\alpha_j\in \mathbb R$. Therefore, since $\phi'_j(0+)=-\phi'_1(0-)\frac{ c_1 }{ c_j }$, $j=2,3$, we obtain from the conditions given by elements of $D(\mathcal{L}_Z )$ that
\begin{equation}\label{spec3}
\alpha_1=-\alpha_2\frac{ c_1 }{ c_2 }=-\alpha_3\frac{ c_1 }{ c_3 },\;\;\;\;
 \sum\limits_{j=2}^3 \alpha_j c_j^2\phi''_j(0+) - \alpha_1c_1^2\phi''_1(0-)=Z\alpha_1\phi'_1 (0-).
\end{equation}
Next, we consider the following cases:
\begin{enumerate}
\item[1)] Let $Z=-\frac{2}{\pi} \sum_{j=1}^3  c_j $. Then, from $\phi''_j(0)=0$ for all $i$ we obtain $\alpha_1\phi'_1 (0-)=0$. Since $\phi'_1 (0-)\neq 0$ we have $\alpha_1=0$ and so $\alpha_2=\alpha_3=0$. Hence $\bold{u}=\bold{0}$.
\item[2)] Let $Z \in (- \frac{2}{\pi} \sum_{j=1}^3  c_j , 0)$. From equation \eqref{trav21} and continuity we have $-c_j^2\phi''_j(0+)=-\sin (\phi_j(0+))=-\sin (\phi_1(0-))=-c_1^2\phi''_1(0-)$. Then from \eqref{spec3} there follows
\begin{equation}
\label{spec4}
-\alpha_1 c_1  \phi''_1(0-)\sum_{j=1}^3  c_j =Z\alpha_1 \phi'_1(0-).
\end{equation}
Suppose $\alpha_1\neq 0$. Then, since $\phi''_1(0-)<0$ and  $\phi'_1(0-)>0$ we obtain a contradiction from \eqref{spec4}. Hence, $\alpha_1=\alpha_2=\alpha_3=0$.
\item[3)] Let $Z\in (-\sum_{j=1}^3  c_j , -\frac{2}{\pi} \sum_{j=1}^3  c_j )$. Suppose $\alpha_1\neq 0$. Then, since 
$$
\phi''_1(0)=4\frac{e^{-\frac{a_1}{ c_1 }}- e^{-\frac{3a_1}{ c_1 }}}{[1+e^{-\frac{2a_1}{ c_1 }}]^2 c_1^2},\quad \phi'_1(0)=\frac{4e^{-\frac{a_1}{ c_1 }}}{[1+e^{-\frac{2a_1}{ c_1 }}]  c_1 },
$$
we obtain from \eqref{trav25} and \eqref{spec4} the relation
\begin{equation}\label{spec5}
(1-y^2) \arctan y = y,\qquad y=e^{-\frac{a_1}{ c_1 }}.
\end{equation}
Since $a_1>0$ we obtain $y\in (0,1)$ and so the function $h(x)=(1-x^2) \arctan x \, -x$ has a zero for $x\in (0,1)$. Since $h(0)=0, h(1)=-1$ and $h'(x)<0$ on $(0,1)$, we obtain a contradiction. Hence, $\alpha_1=\alpha_2=\alpha_3=0$. 
\end{enumerate}

\end{proof}

\begin{proposition}
\label{main3}
Let $Z\in  \big(-\sum_{j=1}^3  c_j , -\frac{2}{\pi} \sum_{j=1}^3  c_j  \big]$. Then $n( \mathcal{L}_Z )=1$.
\end{proposition}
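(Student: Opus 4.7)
The plan is to establish the two matching inequalities $n(\mathcal{L}_Z)\leqq 1$ and $n(\mathcal{L}_Z)\geqq 1$ separately, mimicking the structure used for $\mathcal{F}_Z$ in the proof of Theorem \ref{spectrum}. For the upper bound I would argue via extension theory. The operator $\mathcal{L}_Z$ is a self-adjoint extension of a minimal symmetric operator $\mathcal{M}_\phi$ obtained by adding the bounded potential $\mathrm{diag}(\cos\phi_j)$ to the symmetric operator $\mathcal{M}$ of Proposition \ref{M} (so $\mathcal{M}_\phi$ keeps deficiency indices $n_\pm=1$). One then checks $\mathcal{M}_\phi\geqq 0$: with Dirichlet condition at the vertex, $\mathcal{M}_\phi$ decouples into three half-line Schr\"odinger operators $-c_j^2 d^2/dx^2+\cos\phi_j$, and on each edge the function $|\phi_j'|$ is a strictly positive $H^2_{\mathrm{loc}}$ solution of the zero-eigenvalue equation $-c_j^2 w''+\cos(\phi_j)w=0$; by the Allegretto-Piepenbrink/Sturm oscillation principle, the corresponding half-line Dirichlet operator is non-negative. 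Applying Proposition \ref{semibounded} (as in Step 1 of Theorem \ref{spectrum}) then yields $n(\mathcal{L}_Z)\leqq n(\mathcal{M}_\phi)+1=1$.

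For the lower bound I would exhibit a single explicit negative direction for the associated quadratic form $q_Z(\mathbf{v})=\sum_j\int_{E_j}c_j^2(v_j')^2+\cos(\phi_j)v_j^2\,dx+Z|v_1(0)|^2$ on the form domain $\mathcal{E}(\mathcal{Y})$. Since $\mathcal{L}_j\phi_j'=0$ on each edge, the natural candidate is the \emph{renormalized derivative}
\[
\mathbf{v}=(v_j)_{j=1}^3,\qquad v_1=\frac{\phi_1'}{\phi_1'(0-)},\qquad v_j=\frac{\phi_j'}{\phi_j'(0+)}\; (j=2,3),
\]
which satisfies $v_j(0)=1$ for every $j$, hence $\mathbf{v}\in\mathcal{E}(\mathcal{Y})$. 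Because $\mathcal{L}_j v_j\equiv 0$ pointwise, an integration by parts collapses the bulk integrals to boundary terms and leaves
\[
q_Z(\mathbf{v})=c_1^2\frac{\phi_1''(0-)}{\phi_1'(0-)}-\sum_{j=2}^{3}c_j^2\frac{\phi_j''(0+)}{\phi_j'(0+)}+Z.
\]
Using the stationary equation $c_j^2\phi_j''=\sin\phi_j$, the continuity $\phi_j(0)=\phi_1(0)$, the explicit formulas $\phi_1'(0-)=4y/(c_1(1+y^2))$, $\phi_j'(0+)=-4y/(c_j(1+y^2))$ with $y=e^{-a_1/c_1}$, and the identity $\sin(4\arctan y)=4y(1-y^2)/(1+y^2)^2$, the expression simplifies to
\[
q_Z(\mathbf{v})=\frac{\sum_{j=1}^3 c_j}{(1+y^2)\arctan(y)}\bigl[(1-y^2)\arctan(y)-y\bigr],
\]
after eliminating $Z$ via the vertex relation \eqref{trav25}. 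For $Z\in(-\sum c_j,-\tfrac{2}{\pi}\sum c_j]$ one has $y\in(0,1]$, and the function $h(y)=(1-y^2)\arctan(y)-y$ appearing in the proof of Proposition \ref{main2} is strictly negative there (since $h(0)=0$, $h(1)=-1$, and $h'(y)=-2y\arctan(y)-2y^2/(1+y^2)<0$). Therefore $q_Z(\mathbf{v})<0$, giving $n(\mathcal{L}_Z)\geqq 1$.

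Combining the two bounds yields $n(\mathcal{L}_Z)=1$ throughout the range under consideration. The main obstacle I anticipate is the rigorous justification of $\mathcal{M}_\phi\geqq 0$ on each Dirichlet half-line, since this is where the specific structure of the kink potential enters; a clean alternative that avoids this step is to invoke analytic perturbation theory in $Z$, combining the lower bound above with Proposition \ref{main2} (no eigenvalue can cross $0$ as $Z$ varies over the open interval, so the Morse index is constant and equal to the value forced by the test-function computation).
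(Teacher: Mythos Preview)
Your proof is correct. The upper bound $n(\mathcal{L}_Z)\leqq 1$ follows the same route as the paper: extension theory applied to the minimal symmetric operator (your $\mathcal{M}_\phi$, the paper's $\mathcal{M}_0$) with deficiency indices $1$, together with its non-negativity. The paper establishes $\mathcal{M}_0\geqq 0$ via the explicit factorization
\[
L_j\psi=-\frac{1}{\phi_j'}\frac{d}{dx}\Big[c_j^2(\phi_j')^2\,\frac{d}{dx}\Big(\frac{\psi}{\phi_j'}\Big)\Big],
\]
which is precisely the computational content of the Allegretto--Piepenbrink principle you invoke; your concern about this step is therefore unwarranted. (A minor imprecision: $\mathcal{M}_\phi$ does not literally decouple, since its domain retains the Kirchhoff flux condition, but the Dirichlet condition at the vertex kills all boundary terms in the quadratic form, so your edge-by-edge estimate is valid.)

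The genuine difference is in the lower bound. The paper takes the profile $\Phi=(\phi_j)$ as test vector and uses the inequality $\theta\cos\theta\leqq\sin\theta$ on $[0,\pi]$ to obtain $\langle\mathcal{L}_Z\Phi,\Phi\rangle<0$; this requires $\phi_j\leqq\pi$, which holds only for $Z\in(-\sum c_j,-\tfrac{2}{\pi}\sum c_j]$, and the paper must then invoke analytic perturbation theory (Proposition~\ref{main4}) to handle the complementary range. Your choice of the normalized derivative $\mathbf{v}=(\phi_j'/\phi_j'(0))$ is sharper: since $h(y)=(1-y^2)\arctan y-y$ satisfies $h(0)=0$ and $h'(y)=-2y\arctan y-2y^2/(1+y^2)<0$ for \emph{all} $y>0$, your formula for $q_Z(\mathbf{v})$ is strictly negative for every $y>0$, i.e.\ for every $Z\in(-\sum c_j,0)$. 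Combined with the upper bound (which, as the paper acknowledges in Remark~\ref{noway}, holds throughout), your test vector therefore yields $n(\mathcal{L}_Z)=1$ on the full interval in one stroke, making the perturbation argument of Proposition~\ref{main4}---and your own fallback suggestion---unnecessary.
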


\begin{proof} We will use the extension theory for symmetric operator. In fact, from Proposition \ref{M} in Appendix we obtain that the family $(\mathcal L_Z, D(\mathcal L_Z))$ represents all the self-adjoint extensions of the closed symmetric operator $(\mathcal M_0, D(\mathcal M_0))$ where
\begin{equation}\label{spec6}
\begin{split}
&\mathcal{M}_0=\Big (\Big(-c_j^2\frac{d^2}{dx^2}+ \cos (\phi_j)
\Big)\delta_{j,k} \Big ),\;1\leqq j, k\leqq 3,\\
&D(\mathcal{M}_0)= \Big \{(v_j)_{j=1}^3\in H^2(\mathcal{G}):
v_1(0-)=v_2(0+)=v_3(0+)=0, \, \sum_{j=2}^3 c_j^2v_j'(0+)-c_1^2v_1'(0-)=0 \Big \},
\end{split}
\end{equation}
where $n_{\pm}(\mathcal{M}_0)=1$. Next, we show that  $\mathcal{M}_0\geqq 0$. Let $L_j=-c_j^2\frac{d^2}{dx^2}+ \cos (\phi_j)$, then from \eqref{trav21} we obtain 
\begin{equation}\label{spec7}
L_j\psi=-\frac{1}{\phi'_j} \frac{d}{dx}\Big[c_j^2  (\phi'_j)^2 \frac{d}{dx}\Big(\frac{\psi}{\phi'_j}\Big)\Big].
\end{equation}
We note that always we have $\phi'_j\neq 0$. Thus for $\Psi=(\psi_j)\in D(\mathcal{M}_0)$ we obtain
\begin{equation}\label{spec6}
\begin{split}
\langle \mathcal{M}_0 \Psi,\Psi\rangle&=\int_{-\infty}^0c_1^2(\phi'_1)^2\Big|\frac{d}{dx}\Big(\frac{\psi_1}{\phi'_1}\Big)\Big|^2dx +\sum_{j=2}^3\int_0^{+\infty}c_j^2(\phi'_j)^2\Big|\frac{d}{dx}\Big(\frac{\psi_j}{\phi'_j}\Big)\Big|^2dx\\
&-c_1^2\psi_1(0)\Big[\frac{\psi'_1(0)\phi'_1(0)-\psi_1(0)\phi''_1(0)}{\phi'_1(0)}\Big]+\sum_{j=2}^3c_j^2\psi_j(0)\Big[\frac{\psi'_j(0)\phi'_j(0)-\psi_j(0)\phi''_j(0)}{\phi'_j(0)}\Big]
\end{split}
\end{equation}
The integral terms in \eqref{spec6} are non-negative and equal zero if and only if $\Psi\equiv 0$.  Due to the conditions $\psi_1(0-)=\psi_2(0+)=\psi_3(0+)=0$ non-integral term vanishes, and we get $\mathcal M_0\geqq 0$.

Due to  Proposition \ref{semibounded} (see Appendix \S \ref{secApp}),  we have all the  self-adjoint extensions $\mathcal L_Z$ of $\mathcal M_0$ satisfies $n(\mathcal L_Z)\leqq 1$. Next, for $\Phi=(\phi_1, \phi_2, \phi_3)\in D(\mathcal L_Z)$, it follows from the relations $L_j\phi_j=-\sin (\phi_j)+ \cos (\phi_j)\phi_j$ that
$$
\langle \mathcal L_Z \Phi, \Phi\rangle=\int_{-\infty}^0[-\sin (\phi_1)+ \cos (\phi_1)\phi_1]\phi_1dx +\sum_{j=2}^3\int_0^{+\infty}[-\sin (\phi_j)+ \cos (\phi_j)\phi_j]\phi_jdx<0,
$$
because of $0<\phi_j(x)\leqq \pi$ for every $Z\in \big(-\sum_{j=1}^3  c_j , -\frac{2}{\pi} \sum_{j=1}^3  c_j  \big]$ and $\theta  \cos  \theta\leqq \sin  \theta$ for all $\theta\in [0, \pi]$.  Then from minimax principle we  arrive at $n(\mathcal L_Z)=1$. This finishes the proof.
\end{proof}

\begin{remark}\label{noway}For the  case $Z\in \big(-\frac{2}{\pi} \sum_{j=1}^3  c_j ,0 \big)$ in Proposition \ref{main3}, it is no clear for us if the extension theory approach can give us the exact value of the Morse-index of $\mathcal L_Z$ for every $Z$. Indeed, the  nonnegative property for $\mathcal F_0$ is still right from \eqref{spec6}, but the quadratic form $\langle \mathcal L_Z \Phi, \Phi\rangle$ may have an undefined sign because of $\phi_i\in (0, \eta_0)$, $\eta_0=4  \arctan (e^ {-\frac{1}{ c_1 }a_1})>\pi$. We note here that the inequality $\theta  \cos  \theta\leqq \sin  \theta$ still is true for all $\theta\in [0, \theta_0]$ where $\theta_0\approx 4.4934$ is the unique zero for $h(x)=\tan x-x$ in the interval $(\pi, 2\pi)$. Thus, it is not difficult to see that for specific intervals of $a_1<0$ we have either $\eta_0<\theta_0$ or $\eta_0>\theta_0$. As we will see in the following proposition the property $n(\mathcal L_Z)=1$ is still true.

\end{remark}

\begin{proposition}
\label{main4}
Let $Z\in \big(-\frac{2}{\pi} \sum_{j=1}^3  c_j ,0\big)$. Then $n( \mathcal{L}_Z )=1$.
\end{proposition}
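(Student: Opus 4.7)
The plan is to extend the Morse-index result from Proposition \ref{main3} to the bump regime by means of real-analytic perturbation theory in the parameter $Z$, leveraging the triviality of the kernel (Proposition \ref{main2}) and the fact that the essential spectrum of $\mathcal{L}_Z$ stays bounded away from zero uniformly in $Z$. Since $\phi_j(x)\to 0$ as $|x|\to\infty$ on every edge, the potential $\cos(\phi_j)$ tends to $1$ at the ends of the graph, and a standard relative-compactness argument yields $\sigma_{\mathrm{ess}}(\mathcal{L}_Z)=[1,\infty)$ independently of $Z\in(-\sum_{j=1}^3 c_j,0)$; in particular, the continuous spectrum sits uniformly above $\lambda=1$.

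First I would establish that $\{\mathcal{L}_Z\}$ is a self-adjoint holomorphic family of type~(B) in the sense of Kato on the open interval $(-\sum_{j=1}^3 c_j,0)$. The strict monotonicity of the function $f$ in \eqref{trav26} on $(0,\infty)$ makes relation \eqref{trav25} an analytic implicit equation for $a_1=a_1(Z)$, so the potential $\cos(\phi_j(\cdot;Z))$ depends real-analytically on $Z$ in $L^\infty(\mathcal{Y})$. The associated quadratic form
\[
q_Z[\mathbf{v}]=\sum_{j=1}^3\int_{E_j}\!\Big(c_j^2|v_j'|^2+\cos(\phi_j(\cdot;Z))|v_j|^2\Big)\,dx+Z|v_1(0)|^2,
\]
is defined on the $Z$-independent form domain $\mathcal{E}(\mathcal{Y})$; it depends analytically on $Z$ through the potential and linearly through the vertex term, the latter being controlled by the continuity of the vertex trace on $H^1(\mathcal{Y})$. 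One then verifies that $q_Z$ is uniformly bounded below on compact subsets of the parameter interval, which ensures that Kato's analytic perturbation theory applies to every discrete eigenvalue of $\mathcal{L}_Z$ lying below the essential-spectrum threshold $\lambda=1$.

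Next, each eigenvalue branch $\lambda(Z)$ of $\mathcal{L}_Z$ in $(-\infty,1)$ is then a real-analytic function of $Z$ and can be continued as long as it remains separated from $\inf\sigma_{\mathrm{ess}}(\mathcal{L}_Z)=1$. Proposition \ref{main2} guarantees that $0$ is not an eigenvalue of $\mathcal{L}_Z$ for any $Z\in(-\sum c_j,0)$, so no branch can cross $0$; and since the essential spectrum is the fixed set $[1,\infty)$, no new eigenvalue can emerge near $0$ from the continuous spectrum. Consequently, the number $n(\mathcal{L}_Z)$ of negative eigenvalues is locally constant in $Z$ throughout $(-\sum c_j,0)$.

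The conclusion then follows by connectedness: the endpoint $Z_0=-\tfrac{2}{\pi}\sum c_j$ belongs to the closure of the bump interval $(-\tfrac{2}{\pi}\sum c_j,0)$, and by Proposition \ref{main3} we already know $n(\mathcal{L}_{Z_0})=1$, so the local constancy above forces $n(\mathcal{L}_Z)=1$ for every $Z\in(-\tfrac{2}{\pi}\sum c_j,0)$. The main technical obstacle will be the rigorous verification of the type~(B) hypothesis, because both the potential and the vertex boundary condition encoded by $Z$ vary simultaneously; passing to the fixed form domain $\mathcal{E}(\mathcal{Y})$, so that the $Z$-dependence is carried by the analytic potential $\cos(\phi_j(\cdot;Z))$ and by the bounded boundary correction $Z|v_1(0)|^2$, is precisely what makes the perturbation argument uniform in $Z$ and circumvents the sign ambiguity of $\langle \mathcal{L}_Z\Phi,\Phi\rangle$ noted in Remark \ref{noway}.
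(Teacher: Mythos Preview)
Your proposal is correct and follows essentially the same strategy as the paper: both arguments establish that $\{\mathcal{L}_Z\}$ is a real-analytic family of type~(B) in the sense of Kato, invoke Proposition~\ref{main2} to rule out eigenvalues crossing zero, and then propagate the value $n(\mathcal{L}_{Z^*})=1$ from Proposition~\ref{main3} across the interval $(Z^*,0)$ by a continuation/connectedness argument. The paper packages the continuation step through an explicit Riesz-projection argument (with a preliminary generalized-convergence step in the gap metric near $Z^*$), while you phrase it as local constancy of the negative-eigenvalue count using that $\sigma_{\mathrm{ess}}(\mathcal{L}_Z)=[1,\infty)$ keeps the discrete spectrum below $1$ isolated; these are equivalent formulations of the same mechanism.
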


\begin{proof} We will use analytic perturbation theory. Initially, from  Proposition \ref{main3} we have for $Z^*=-\frac{2}{\pi} \sum_{j=1}^3  c_j $ that  $n( \mathcal{L}_{Z^*} )=1$. Now, from \eqref{trav25}-\eqref{trav26} we have the continuous  mapping function $Z\in \big(-\sum_{j=1}^3  c_j ,0\big)\to a_1(Z)$ such that 
\begin{equation}
a_1(Z)=\begin{cases}
\begin{aligned}
&<0,\quad \text{for}\;\;Z^*<Z<0,\\
&=0,\quad \text{for}\;\;Z=Z^*,\\
&>0,\quad \text{for}\;\;-\sum\limits_{j=1}^3  c_j <Z<Z^*.
\end{aligned}
\end{cases}
\end{equation}
Thus, by denoting the  stationary profiles in \eqref{trav24} as a function of $Z$, $\Phi_{a_1(Z)}=(\phi_{j, a_1(Z)})$ we obtain the convergence $\Phi_{a_1(Z)}\to \Phi_0$ as $Z\to Z^*$ in $H^1 (\mathcal G)$. Here $\Phi_{0}=(\phi_{j, 0})$.

Next, we obtain that  $\mathcal L_Z$ converges to $\mathcal L_{Z^*}$ as $Z \to Z^*$ in the generalized sense. Indeed,  denoting 
$$
W_Z=\Big( \cos (\phi_{j, a_1(Z)})\delta_{j,k}\Big)
$$
we obtain 
\begin{equation*}
 \begin{split}
 \widehat{\delta}(\mathcal L_Z, \mathcal L_{Z^*} )&=\widehat{\delta}(\mathcal L_{Z^*} + (W_Z-W_{Z^*}), \mathcal L_{Z^*})\leqq \|W_Z-W_{Z^*}\|_{L^2 (\mathcal G)}\to 0,\qquad\text{as}\;\;Z\to Z^*,
\end{split}
\end{equation*}
where $\widehat{\delta}$ is  the gap metric  (see \cite[Chapter IV]{kato}). 

Now, from Proposition \ref{main2} and from  Morse-index for $\mathcal L_{Z^*}$ being one, we can separate the spectrum $\sigma(\mathcal L_{Z^*})$ of $\mathcal L_{Z^*}$   into two parts $\sigma_0=\{\lambda^*\}$ ($\lambda^*<0$) and $\sigma_1$ by a closed curve  $\Gamma$ belongs to the resolvent set of $\mathcal L_{Z^*}$ with $0\in \Gamma$ and   such that $\sigma_0$ belongs to the inner domain of $\Gamma$ and $\sigma_1$ to the outer domain of $\Gamma$. Moreover, $\sigma_1\subset [\theta_{Z^*}, +\infty)$ with $\theta_{Z^*}=\inf\{\lambda: \lambda\in \sigma(\mathcal L_{Z^*}),\; \lambda>0\}>0$. Then, by  \cite[Theorem 3.16, Chapter IV]{kato}, we have  $\Gamma\subset \rho(\mathcal L_{Z})$ for $Z\in [Z^*-\delta_1, Z^*+\delta_1]$ and $\delta_1>0$ small enough. Moreover, $\sigma(\mathcal L_{Z})$ is likewise separated by $\Gamma$ into two parts so that the part of $\sigma (\mathcal L_{Z})$ inside $\Gamma$ (negative eigenvalues) will consist exactly of a unique negative eigenvalue with total multiplicity (algebraic) one. Therefore, $n(\mathcal L_{Z})=1$ for $Z\in [Z^*-\delta_1, Z^*+\delta_1]$.

Next, we use a classical continuation argument based on the Riesz-projection for showing that $n( \mathcal{L}_Z )=1$ for all $Z\in  (Z^*,0)$. Indeed, define $\omega$ by
$$
\omega= \sup \left\{\eta: \eta\in (Z^*, 0) \; \text{s.t.} \; n(\mathcal{L}_Z)=1\;\;\text{for all}\;\;Z\in (Z^*, \eta)\right\}.
$$
Analysis above implies that $\omega$ is well defined, and $\omega\in (Z^*, 0)$. We claim that $\omega=0$.  Suppose that $\omega<0$.  Let $N=n(\mathcal{L}_\omega)$, and $\Gamma$ be  a closed curve such that $0\in  \Gamma\subset \rho(\mathcal L_\omega)$, and all the negative eigenvalues  of $\mathcal L_\omega$ belong to the inner domain of $\Gamma$. Next,  using that as a function of $Z$, $(\mathcal{L}_Z)$ is a real-analytic family of self-adjoint operators of type $(B)$ in the sense of Kato  (see \cite{kato})  we deduce that there is  $\epsilon>0$ such that for $Z\in [\omega-\epsilon, \omega+\epsilon]$ we have $\Gamma\subset \rho(\mathcal L_{Z})$,  and  the mapping $Z\to (\mathcal L_{Z}-\xi)^{-1}$ is analytic for  $\xi\in \Gamma$. Therefore, the existence of an analytic family of Riesz-projections $Z\to P(Z)$ given by
$$
P(Z)=-\frac{1}{2\pi i}\ointctrclockwise_{\Gamma} (\mathcal L_Z-\xi)^{-1}d\xi 
$$
implies that $\dim(\mathrm{range} \, P(Z))=\dim(\mathrm{range} \, P(\omega))=N$ for all $Z\in [\omega-\epsilon, \omega+\epsilon]$. Further, by definition of $\omega$, there is $\eta_0\in (\omega-\epsilon,\omega)$ and $\mathcal{L}_Z$ has $n(\mathcal{L}_Z)=1$ for all $Z\in (Z^*,\eta_0)$. Therefore, $N=1$ and $\mathcal L_{\omega+\epsilon}$ has exactly one negative eigenvalue, hence $\mathcal L_{Z}$ has $n(\mathcal{L}_Z)=1$  for $Z\in (Z^*, \omega+\epsilon)$, which contradicts with the definition of $\omega$. Thus, $\omega= 0$. This finishes the proof.

\end{proof}

\begin{remark} It is immediate from the proof of Proposition \ref{main4} that we recover the result of $n( \mathcal{L}_Z )=1$ for $Z\in  (-\sum_{j=1}^3  c_j , -\frac{2}{\pi} \sum_{j=1}^3  c_j  )$ in Proposition \ref{main3}. Moreover, we do not know another approach (other than extension theory) for showing that $n( \mathcal{L}_{Z^*} )=1$.
\end{remark}

\begin{proof}[Proof of Theorem \ref{1main}] 
The spectral (linear) instability of the stationary profiles $\Phi_Z$ follows from a direct application of Propositions \ref{main2}, \ref{main3}, \ref{main4}, and Theorem \ref{crit}. This finishes the proof.
\end{proof}

\begin{remark}
\label{remnolineal}
Since the mapping data-solution for the sine-Gordon model on $\mathcal E(\mathcal Y)\times L^2(\mathcal Y)$ is at least of class $C^2$ (in fact, it is smooth), by Theorem \ref{well0} and from the approach by Henry {\it et al.} \cite{HPW82} (see also Angulo and Natali \cite{AngNat16}, Angulo \emph{et al.} \cite{ALN08}) it follows that the linear instability property of $\Phi_Z$ is actually of nonlinear type in the $H^1(\mathcal Y)\times L^2(\mathcal Y)$-norm by the flow of the sine-Gordon model. In other words, the spectral (linear) stability result presented here implies the orbital (nonlinear) stability of the static solutions of kink-type. The well-posedness of the Cauchy problem is an essential ingredient to reach this conclusion. The reader is referred to \cite{AngNat16,ALN08,HPW82} for further information.
\end{remark}

\section{Instability of stationary solutions of kink and anti-kink type  for the sine-Gordon equation with $\delta$-interaction}
\label{aksect}
In this section apply the linear instability criterion (Theorem \ref{crit} above) to the case of stationary solutions of kink and anti-kink type of the form \eqref{trav-akink}, determined by a $\delta$-interaction type at the vertex  $\nu=0$. For concreteness and without loss of generality, we consider $c_j=1$, $j=1,2,3$, in \eqref{sg2}-\eqref{trav21}. Hence, the structure of such stationary wave solutions, $\Phi =(\phi_j)_{j=1}^3$, are given in this case by
\begin{equation}
\label{ak1}
\begin{split}
&\phi_1(x)=4  \arctan \Big(e^{-(x-a_1)}\Big),\;\;\;\;\;x<0, \;\;\; \lim_{x\to -\infty}\phi_1(x)=2\pi\\
& \phi_i(x)=4  \arctan \Big(e^{-(x-a_i)}\Big), \;\;\;\;\; x>0,\;\;\;  \lim_{x\to +\infty}\phi_i(x)=0,\;\;i=2,3,
\end{split}
\end{equation}
and the conditions $ \phi_1(0-)=\phi_2(0+)= \phi_3(0+)$ and  $\sum_{i=2}^3  \phi'_i(0+) - \phi'_1(0-)=Z  \phi_1(0-)$. Thus we obtain immediately  the ``same shift property", $a_1=a_2=a_3$, and from the equality
\begin{equation}\label{ak2}
-\frac{e^{a_1}}{1+e^{2a_1}}=Z \arctan \Big(e^{a_1}\Big),
\end{equation}
the condition $Z\in (-1,0)$.  Moreover, we have the following specific behavior of the profiles $ \phi_i$:
\begin{enumerate}
\item[1)] for  $Z\in (-1 , -\frac{2}{\pi} )$ we obtain $a_1<0$; therefore $ \phi_i'' >0$ for every $i=2,3$, and $\phi_1''(a_1)=0$. Thus, the profile of $(\phi_j)_{j=1}^3$  should look similar to Figure \ref{figAKI} below, namely, two left-translated anti-kink on all the line. Moreover, $\phi_i(0)\in (\eta, \pi)$, $i=1,2, 3$, $\eta>0$,
\item[2)] for   $Z\in (-\frac{2}{\pi}, 0)$ we obtain $a_1>0$; therefore $\phi_i''(a_1) =0$,  $i=2, 3$. 
Thus, the profile of $(\phi_j)_{j=1}^3$  should look similar to Figure \ref{figAKII} below, namely, two right-translated anti-kink on all the line. Moreover, $\phi_i(0)\in (\pi, 2\pi)$, $i=1,2, 3$,
\item[3)]  the case  $Z=-\frac{2}{\pi} $ implies $a_1=a_2=a_3=0$; therefore,  $\phi_i(0) =\pi$ and   $\phi_i''(0) =0$, $i=1,2, 3$. In this case, we have two-classical anti-kink profile around the vertex $\nu=0$ (see Figure \ref{figAKIII} below). 
\end{enumerate}

\begin{figure}[t]
\begin{center}
\subfigure[$Z\in (-1,-2/\pi)$]{\label{figAKI}\includegraphics[scale=.42, clip=true]{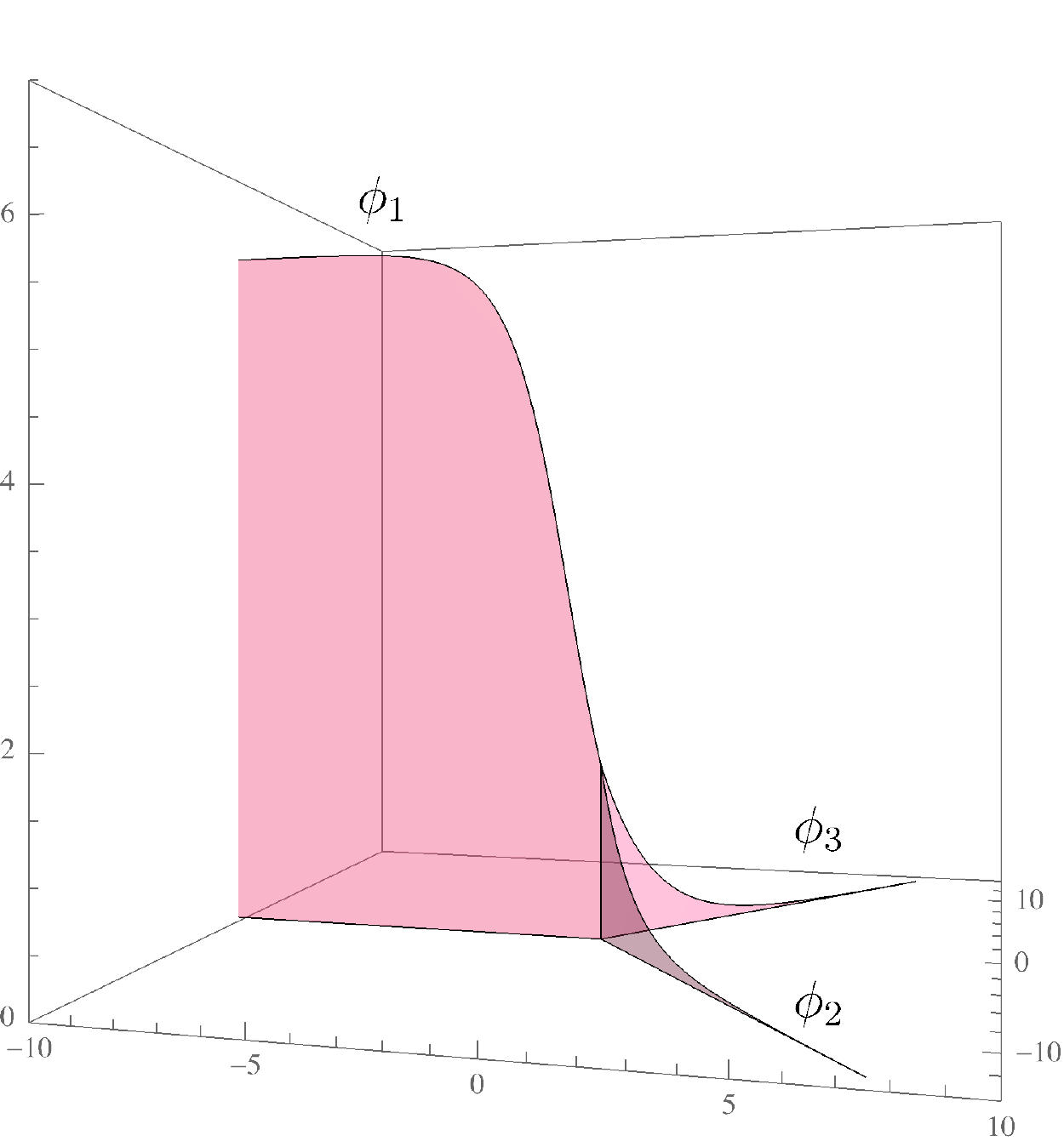}}
\subfigure[$Z\in (-2/\pi, 0)$]{\label{figAKII}\includegraphics[scale=.42, clip=true]{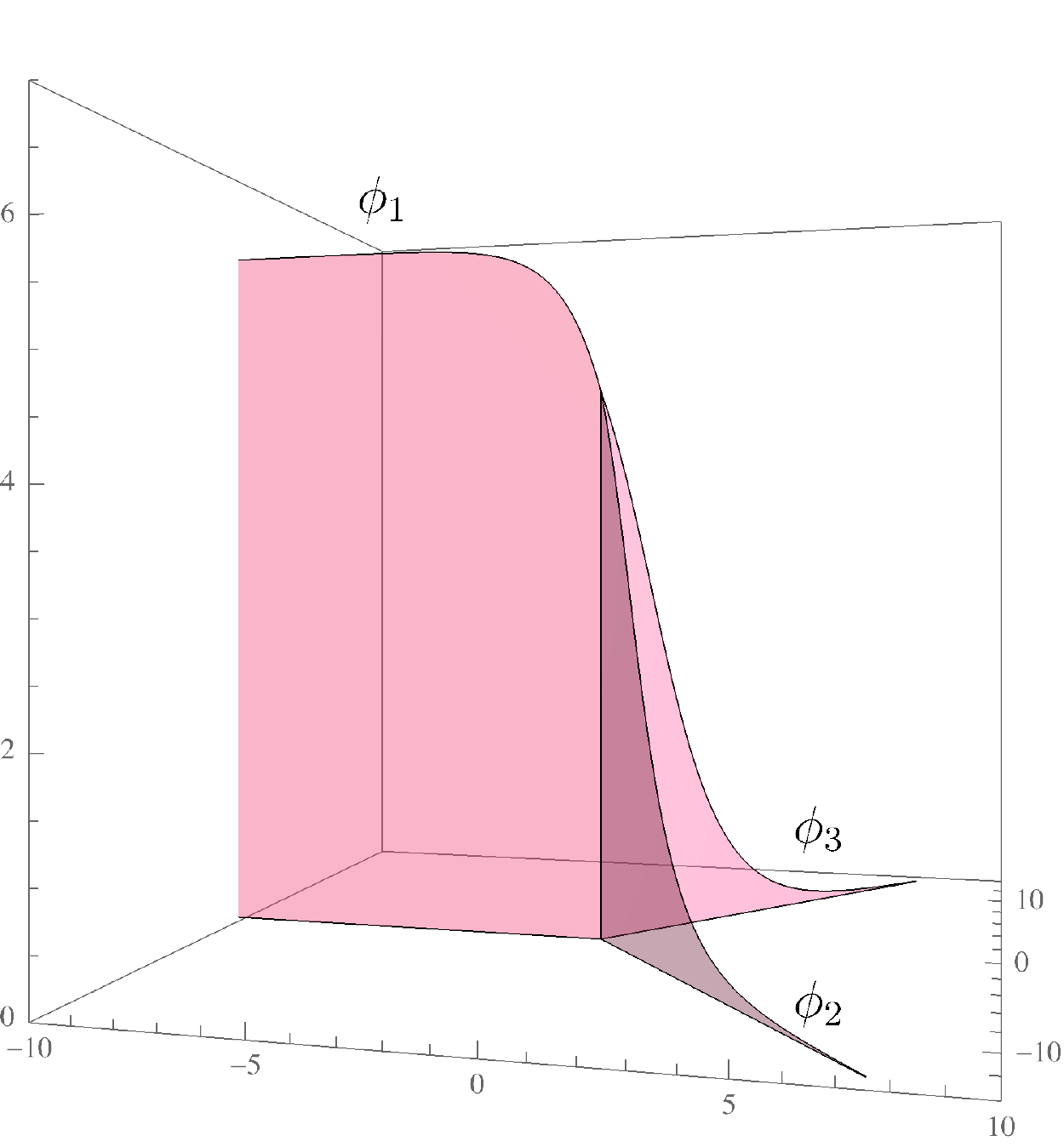}}
\subfigure[$Z=-2/\pi$]{\label{figAKIII}\includegraphics[scale=.42, clip=true]{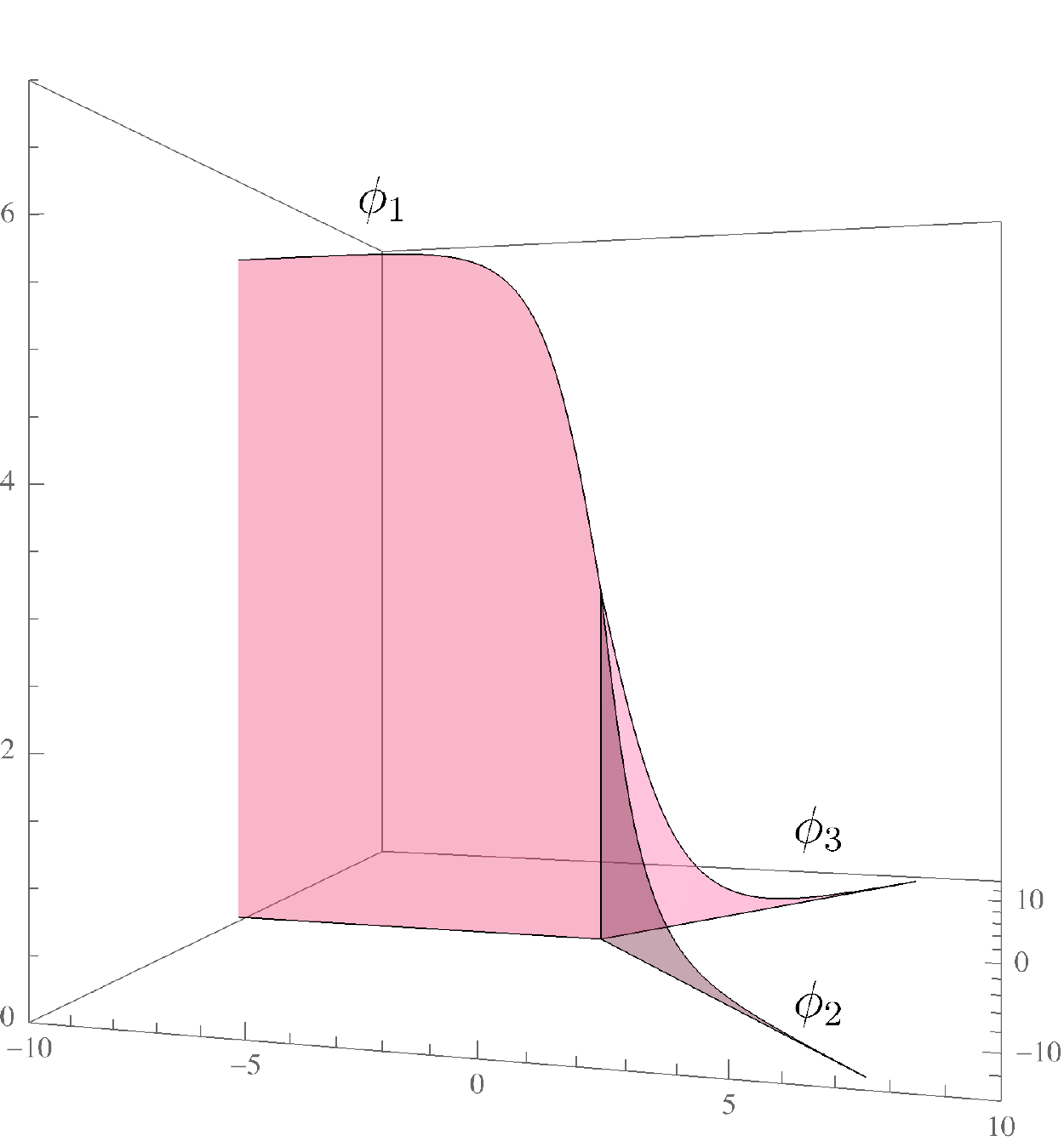}}
\end{center}
\caption{\small{Plots of stationary solutions of anti-kink/kink type of the form \eqref{ak1}, in the case where $c_j = 1$ for all $j=1,2,3$, for different values of $Z\in (-1,0)$. Panel (a) shows the stationary profile solutions (left-translated anti-kink configuration) for the value $Z = -0.9 \in (-1,-2/\pi)$. Panel (b) shows the profile (right-translated anti-kink) for the value $Z = -0.25 \in (-2/\pi,0)$. Panel (c) shows the profile solution for the parameter value $Z = -2/\pi$ (color online).}}\label{figAntikink}
\end{figure}

The main stability result for the stationary profiles
$\Pi_Z=(\phi_1, \phi_2, \phi_3, 0,0,0)$ with $\phi_j$ defined in \eqref{ak1} is the following.
\begin{theorem}
\label{akmain}
Let $Z\in  (-1 , 0)$ and the smooth family of stationary profiles $Z\to \Pi_Z$ defined in \eqref{ak1}. Then $\Pi_Z$ is spectrally unstable for the sine-Gordon model \eqref{stat1}.
\end{theorem}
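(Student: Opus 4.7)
The plan is to mirror the proof of Theorem \ref{1main} for the pure-kink configuration, with one critical preliminary adjustment: since $\Pi_Z \notin H^2(\mathcal{Y})$ (indeed $\phi_1(x) \to 2\pi$ as $x\to-\infty$), I first recast the dynamics in terms of the perturbation $\bold{v} = \bold{w} - \Pi_Z$, which does lie in the energy space $\mathcal{E}(\mathcal{Y})\times L^2(\mathcal{Y})$ whenever $\bold{w}$ is a finite-energy perturbation of the static configuration; this is the framework alluded to in section \ref{akfunspa}. The linearization around $\Pi_Z$ then reads $\bold{v}_t = J\mathcal{E}\bold{v}$ with $\mathcal{E} = \mathrm{diag}(\mathcal{L}_Z, I_3)$ and $\mathcal{L}_Z$ the diagonal-matrix Schr\"odinger operator \eqref{spec1} with potential $\cos(\phi_j)$, where now $\phi_j$ is given by \eqref{ak1}, acting on the domain $D(\mathcal{L}_Z)$ from \eqref{2trav23}. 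Crucially, the potential $\cos(\phi_j)$ is bounded on each edge, so $\mathcal{L}_Z$ is a well-defined self-adjoint operator even though $\Pi_Z \notin D(\mathcal{L}_Z)$.

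With this setup, the conclusion will follow from the instability criterion of Theorem \ref{crit}, provided I verify its three hypotheses. For $(S_1)$ I write $J\mathcal{E} = JE + \mathcal{T}$ with $\mathcal{T}$ the bounded multiplication operator carrying the entries $(\cos\phi_j - 1)\delta_{j,k}$, and combine Theorem \ref{cauchy1} with the bounded perturbation theorem to conclude that $J\mathcal{E}$ generates a $C_0$-semigroup on $\mathcal{E}(\mathcal{Y})\times L^2(\mathcal{Y})$. For $(S_2)$ the self-adjointness of $\mathcal{L}_Z$ on $D(\mathcal{L}_Z)$ is exactly what Proposition \ref{M} in the appendix gives, independently of which stationary profile generates the potential.

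For $(S_3)$, the kernel computation runs parallel to Proposition \ref{main2}. Differentiating the profile equation \eqref{trav21} edge-wise shows that any element of $\ker(\mathcal{L}_Z)$ must be of the form $(\alpha_1 \phi'_1, \alpha_2 \phi'_2, \alpha_3 \phi'_3)$ by Sturm-Liouville theory on half-lines. The same-shift property $a_1 = a_2 = a_3$ (which is new here, compared with the kink case, and simplifies the bookkeeping) together with the boundary conditions at $\nu = 0$ reduces the problem to a scalar identity in $a_1$ that I expect to fail on each subinterval of $(-1,0)$ considered in cases (1)-(3) above, yielding $\alpha_1=\alpha_2=\alpha_3=0$. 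For the Morse index I proceed as in Propositions \ref{main3}--\ref{main4}: at the distinguished value $Z^* = -2/\pi$ the three profiles glue smoothly at the vertex with $\phi_i(0) = \pi$, and the extension-theory bound $n(\mathcal{L}_{Z^*}) \leq 1$ from Proposition \ref{semibounded} combines with a simple test-function evaluation of $\langle \mathcal{L}_{Z^*}\cdot,\cdot\rangle$ (exploiting $\theta\cos\theta \leq \sin\theta$ on $[0,\pi]$ where case (1) keeps the profiles confined) to give $n(\mathcal{L}_{Z^*}) = 1$. Propagation to all $Z \in (-1,0)$ is then carried out via the Kato-type real-analytic family of type $(B)$ argument and the Riesz-projection continuation already developed in Proposition \ref{main4}.

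The main obstacle I anticipate lies in the subrange $Z \in (-2/\pi, 0)$ (case 2 above), where $\phi_i(0) \in (\pi, 2\pi)$. Here the direct approach of the kink proof, namely evaluating $\langle \mathcal{L}_Z \Pi_Z, \Pi_Z \rangle < 0$, is doubly problematic: first, $\Pi_Z \notin D(\mathcal{L}_Z)$ (nor even in $L^2(\mathcal{Y})$), and second, as noted in Remark \ref{noway}, the pointwise inequality $\theta\cos\theta \leq \sin\theta$ fails past $\theta_0 \approx 4.4934$, so the integrand has no clear sign on the whole range. My strategy for circumventing this is precisely to never compute such a quadratic form in the difficult regime: I anchor the Morse index calculation at the single value $Z^* = -2/\pi$, where everything is tame, and rely on the triviality of $\ker(\mathcal{L}_Z)$ (which prevents eigenvalues from crossing zero) together with the analytic perturbation machinery to freeze $n(\mathcal{L}_Z) = 1$ along the entire curve $Z \mapsto \mathcal{L}_Z$ for $Z \in (-1,0)$. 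Once $(S_1)$-$(S_3)$ are in hand, Theorem \ref{crit} immediately delivers a real positive eigenvalue of $J\mathcal{E}$, hence the claimed spectral instability of $\Pi_Z$.
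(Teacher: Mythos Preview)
Your overall architecture matches the paper's: recast in perturbation variables, verify $(S_1)$--$(S_2)$ as before, show $\ker(\mathcal L_Z)=\{0\}$ via Sturm--Liouville, pin down the Morse index at an anchor value, then propagate by analytic perturbation. The gap is in your anchor argument for $n(\mathcal L_{Z^*})\geq 1$.

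You propose to mimic Proposition~\ref{main3} and test the quadratic form with the profile $\Pi_{Z^*}$ itself, invoking $\theta\cos\theta\le\sin\theta$ on $[0,\pi]$. This fails for two independent reasons, and neither is special to case~(2). First, $\phi_1(x)\to 2\pi$ as $x\to-\infty$ for \emph{every} $Z\in(-1,0)$, so $\Pi_Z\notin L^2(\mathcal Y)$ at the anchor $Z^*=-2/\pi$ just as elsewhere; there is no value of $Z$ at which ``everything is tame'' in this sense. Second, even ignoring integrability, $\phi_1$ on $(-\infty,0)$ always sweeps the interval $(\phi_1(0),2\pi)$, which contains values beyond $\theta_0\approx 4.4934$ where the pointwise inequality reverses. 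Your parenthetical ``where case~(1) keeps the profiles confined'' misreads the paper: the statement $\phi_i(0)\in(\eta,\pi)$ concerns only the \emph{vertex value}, not the range of $\phi_1$ on the whole parent edge.

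The paper's fix is to abandon the profile as test function and use its \emph{derivative} $\Phi'=(\phi_j')\in\mathcal E(\mathcal Y)$ instead. Because $L_j\phi_j'=0$ edgewise, integration by parts collapses the quadratic form to a pure boundary expression,
\[
\mathcal Q(\Phi')=Z[\phi_1'(0)]^2-\phi_1'(0)\phi_1''(0),
\]
and the sign of $\phi_1'(0)\phi_1''(0)$ is easy to read off from the location of $a_1$. This gives $\mathcal Q(\Phi')<0$ directly on $Z\in[-2/\pi,0)$ (Proposition~\ref{akmain3}), which is the \emph{opposite} subinterval from the one you flagged as difficult; the analytic-perturbation continuation (Proposition~\ref{akmain4}) is then needed for $Z\in(-1,-2/\pi)$. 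So your regimes are swapped, and once you replace the unavailable test vector $\Pi_Z$ by $\Phi'$ the rest of your outline goes through as written.
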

It is to be noticed that the kink/anti-kink stationary profiles \eqref{ak1} do not belong to the energy space $H^2(\mathcal{Y})$. Therefore, in order to be able to apply the linear instability criterion of section \S \ref{seccriterium}, we need to verify hypotheses $(S_1)-(S_3)$ with respect to the flow generated by finite energy perturbations of these static solutions. A similar analysis on the well-posedness of perturbed solutions around unbounded periodic wave solutions of subluminal librational type for the sine-Gordon equation can be found in \cite{AnPl16}. In the sequel we establish the appropriate framework for the application of Theorem \ref{crit}.

\subsection{Functional space for stability properties of the kink/anti-kink profile}
\label{akfunspa}
The natural framework space for studying  stability properties of the kink/anti-kink profile $\Phi=(\phi_j)$ for the sine-Gordon model is $\mathcal X(\mathcal Y)= H^1_{\mathrm{loc}}(-\infty, 0)\bigoplus H^1(0, \infty)\bigoplus H^1(0, \infty)$. Thus we say that a flow $t\to (u(t), v(t))\in \mathcal X(\mathcal Y) \times L^2(\mathcal Y)$ is called  a \emph{perturbed solution} for the anti-kink profile $\Phi\in \mathcal X(\mathcal Y)$ if for $(P(t), Q(t))\equiv (u(t)-\Phi,v(t))$ we have that $ (P(t), Q(t))\in H^1(\mathcal Y)\times L^2(\mathcal Y)$ and  
$\bold z=(P,Q)^\top$ satisfies the following vectorial perturbed sine-Gordon model 
 \begin{equation}\label{akmodel}
 \begin{cases}
  \bold z_t=JE\bold z +F_1(\bold z)\\
 P(0)=u(0)-\Phi \in H^1(\mathcal Y),\\
 Q(0)=v(0)\in  L^2(\mathcal Y),
  \end{cases}
 \end{equation} 
  where for $P=(p_1, p_2, p_3)$ we have
   \begin{equation}
  F_1(\bold z)=\left(\begin{array}{cc}  0\\  0   \\  0   \\  \sin(\phi_1)-\sin (p_1+\phi_1)   \\  \sin(\phi_2)-\sin (p_2+\phi_2)   \\  \sin(\phi_3)-\sin (p_3+\phi_3)  \end{array}\right).
  \end{equation} 
Then, by studying stability linear properties of the anti-kink profile $\Pi_Z$ by the sine-Gordon model on $\mathcal X(\mathcal Y) \times L^2(\mathcal Y)$ is reduced to study stability properties of the trivial solution $(P,Q)=(0,0)$ for the  linearized  model \eqref{akmodel} around $(P,Q)=(0,0)$. Thus, via Taylor's Theorem  we obtain the  linearized system in \eqref{stat4} but with the Schr\"odinger diagonal operator $\mathcal L$ in \eqref{stat5} now determined by the anti-kink profile $\Phi=(\phi_j)$. We will denote this operator by $\mathcal L_{ak, Z}$, with domain $D(\mathcal L_{ak, Z})$ determined by the Kirchhoff's boundary conditions defined in \eqref{2trav23} with $c_j=1$. In this way, we can apply {\it{ipsis litteris}} the  semi-group theory results in section \S \ref{seclocalWP} to the operator $JE$ and to the local well-posedness problem in $\mathcal E(\mathcal Y)\times L^2(\mathcal Y)$ for the vectorial perturbed sine-Gordon model \eqref{akmodel}. We note that the anti-kink profile $\Phi\in \mathcal X(\mathcal Y)$ but $\Phi'\in H^2(\mathcal Y)$. 

\subsection{Spectral study}
\label{akspectral}
Next we determine  the assumptions $(S_1)-(S_3)$ required by Theorem \ref{crit} for obtaining that the eigenvalue problem $J\mathcal E\Psi=\lambda \Psi$ has a non-trivial solution with $\RE \lambda >0$. Assumptions $(S_1)-(S_2)$ are verified similarly as in section \S \ref{seccriterium}. The spectral conditions required by assumption $(S_3)$ are given in the following propositions.
\begin{proposition}\label{akmain2}
Let $Z\in  (-1, 0)$. Then $ \ker ( \mathcal{L}_{ak,Z} )=\{\mathbf{0}\}$ and $\sigma_{\mathrm{ess}}(  \mathcal{L}_{ak,Z})=[1, +\infty)$.
\end{proposition}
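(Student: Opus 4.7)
The plan is to mirror the proof of Proposition \ref{main2} for the kernel triviality, and to apply Weyl's theorem on relative compactness for the essential spectrum.

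For the kernel, I would proceed as follows. Differentiating the profile equation $-\phi_j'' + \sin\phi_j = 0$ (with $c_j = 1$) yields $L_j \phi_j' = 0$ on each edge, where $L_j = -\partial_x^2 + \cos\phi_j$ and $\phi_j'(x) = -2\operatorname{sech}(x-a_1)$ with the common shift $a_1 = a_2 = a_3$ (from \eqref{ak1}). Since $\phi_j'$ is the decaying solution on each half-line and the second linearly independent solution obtained by reduction of order is unbounded at infinity, any $u_j \in H^2$ satisfying $L_j u_j = 0$ must take the form $u_j = \alpha_j \phi_j'$. Continuity at the vertex combined with the equality $\phi_1'(0-) = \phi_2'(0+) = \phi_3'(0+) = -2\operatorname{sech}(a_1)$ forces $\alpha_1 = \alpha_2 = \alpha_3 =: \alpha$. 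Using $\phi_j''(0) = \sin(\phi_j(0))$ and the fact that $\phi_j(0)$ is independent of $j$, the $\delta$-Kirchhoff condition in $D(\mathcal{L}_{ak,Z})$ collapses to the scalar identity $\alpha\bigl(\phi_1''(0) - Z\phi_1'(0)\bigr) = 0$.

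The main algebraic task is then to show that $\phi_1''(0) \neq Z\phi_1'(0)$ whenever $Z \in (-1,0)$ satisfies \eqref{ak2}. Plugging in $\phi_1(0) = 4\arctan(e^{a_1})$, a direct computation reduces the vanishing condition to $Z = \tanh(a_1)$, whereas \eqref{ak2} requires $Z = -1/(2\cosh(a_1)\arctan(e^{a_1}))$. Combining these identifies a zero of
\[
g(a) := 1 + 2\sinh(a)\arctan(e^a).
\]
For $a \geq 0$ each summand is nonnegative, so $g(a) \geq 1$. For $a < 0$, the elementary inequality $\arctan(e^a) < e^a$ yields $2|\sinh(a)|\arctan(e^a) < (e^{-a}-e^a)\, e^a = 1 - e^{2a} < 1$, whence $g(a) > 0$. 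Thus $\alpha = 0$ and $\ker(\mathcal{L}_{ak,Z}) = \{\mathbf{0}\}$.

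For the essential spectrum, observe that $\phi_1(x) \to 2\pi$ as $x \to -\infty$ and $\phi_j(x) \to 0$ as $x \to +\infty$ for $j=2,3$, with exponential rate. Hence the matrix multiplication operator $W := \mathrm{diag}\bigl(\cos(\phi_j(x))-1\bigr)_{j=1}^3$ is bounded with entries vanishing at infinity. Writing $\mathcal{L}_{ak,Z} = \mathcal{F}_Z + I + W$ and invoking the standard argument (the compact embedding of $H^2_{\mathrm{loc}}$ into $L^2_{\mathrm{loc}}$ combined with the pointwise decay of $W$) shows that $W\bigl(\mathcal{F}_Z + I - \zeta\bigr)^{-1}$ is compact on $L^2(\mathcal{Y})$ for any $\zeta \in \rho(\mathcal{F}_Z + I)$. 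Weyl's theorem on relative compactness and Theorem \ref{spectrum} then give $\sigma_{\mathrm{ess}}(\mathcal{L}_{ak,Z}) = \sigma_{\mathrm{ess}}(\mathcal{F}_Z + I) = [1,+\infty)$. The only non-routine step is the elementary estimate on $g(a)$; the rest is a direct adaptation of techniques already deployed in the paper.
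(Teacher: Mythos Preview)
Your argument is correct and follows essentially the same route as the paper: Sturm--Liouville theory forces $u_j=\alpha\phi_j'$, continuity at the vertex makes the $\alpha_j$ coincide, the Kirchhoff rule reduces to $\alpha\phi_1''(0)=\alpha Z\phi_1'(0)$, and Weyl's theorem (relative compactness of the decaying potential $\cos\phi_j-1$) yields $\sigma_{\mathrm{ess}}=[1,\infty)$. The only difference is cosmetic: where the paper dispatches the obstruction $\phi_1''(0)=Z\phi_1'(0)$ by the three-case sign analysis inherited from Proposition~\ref{main2}, you collapse all cases into the single elementary estimate $g(a)=1+2\sinh(a)\arctan(e^{a})>0$, which is a bit cleaner but not a genuinely different idea.
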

\begin{proof} 
The proof is {\it{ipsis litteris}} as in the case of Proposition \ref{main2}. Indeed, since $\Phi'\in H^2(\mathcal Y)$ the Sturm-Liouville theory on half-lines  can be applied and so for  $\bold{u}=(u_1, u_2, u_3)\in D(\mathcal{L}_{ak,Z} )$ and $\mathcal{L}_{ak,Z} \bold{u}=\bold{0}$, it implies $u_j=\alpha \phi'_j$ for some $\alpha\in \mathbb R$ (here we also use that $\phi_j'(0+)=\phi'_1(0-)$ for $j=2,3$). Moreover, the Kirchhoff's condition implies also $\alpha\phi ''_1(0-)= \alpha Z\phi '_1(0-)$. Thus by considering the cases $Z\in (-1, -\frac{2}{\pi})$, $Z\in (-\frac{2}{\pi}, 0)$ and $Z=-\frac{2}{\pi}$ we obtain $\alpha=0$. 
Then by Weyl's theorem and the calculations $\lim_{x\to -\infty}\cos(\phi_1(x))=1$ and $\lim_{x\to +\infty}\cos(\phi_j(x))=1$, $j=2,2$, we conclude that $\sigma_{\mathrm{ess}}(  \mathcal{L}_{ak,Z})=[1, +\infty)$.
\end{proof}

\begin{proposition}\label{akmain3}
Let $Z\in  \big[ -\frac{2}{\pi},0)$. Then $n( \mathcal{L}_{ak,Z} )=1$.
\end{proposition}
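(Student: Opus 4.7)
The plan is to mirror Proposition~\ref{main3}: use the extension theory of symmetric operators to secure the upper bound $n(\mathcal{L}_{ak,Z})\leqq 1$, and then exhibit an explicit trial vector in the form domain on which the quadratic form is strictly negative. As a first step I would apply Proposition~\ref{M} to the closed symmetric operator $\mathcal{M}_{0,ak}$ built on the anti-kink background $(\phi_j)$ from \eqref{ak1} with the Dirichlet-type vertex conditions of \eqref{spec6}; this operator fits into the family of self-adjoint extensions $\{\mathcal{L}_{ak,Z}\}_{Z\in\mathbb{R}}$ and has deficiency indices $n_{\pm}=1$. Because each $\phi_j'$ is sign-definite (hence non-vanishing), the factorization $L_j\psi=-\tfrac{1}{\phi_j'}\bigl[(\phi_j')^{2}(\psi/\phi_j')'\bigr]'$ of \eqref{spec7} remains valid, the non-integral terms vanish at the vertex by the Dirichlet conditions, and the $H^{2}$ regularity handles the behavior at $\pm\infty$. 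Hence $\mathcal{M}_{0,ak}\geqq 0$, and Proposition~\ref{semibounded} yields $n(\mathcal{L}_{ak,Z})\leqq 1$ for every $Z$.

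For the lower bound, the test vector $\Phi$ used in Proposition~\ref{main3} is not available here since $\phi_1\to 2\pi$ as $x\to-\infty$, so $\Phi\notin L^{2}(\mathcal{Y})$. The key observation is that in the anti-kink/kink configuration one has the same-shift identity $a_1=a_2=a_3$ and all three edges carry the \emph{identical} functional form $4\arctan(e^{-(x-a_1)})$; consequently $\phi_1'(0-)=\phi_2'(0+)=\phi_3'(0+)$, and therefore $\Phi':=(\phi_1',\phi_2',\phi_3')$ does lie in the energy (form) domain $\mathcal{E}(\mathcal{Y})$. I would then evaluate the quadratic form
\begin{equation*}
Q[\Psi]=\sum_{j=1}^{3}\int_{E_j}\bigl[(\Psi_j')^{2}+\cos(\phi_j)\,\Psi_j^{2}\bigr]\,dx+Z\,|\Psi(0)|^{2}
\end{equation*}
at $\Psi=\Phi'$. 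Using the identity $(\phi_j')''=\cos(\phi_j)\,\phi_j'$ obtained by differentiating the profile equation, and integrating by parts on each half-line, the bulk integrand collapses to the boundary contribution $[\phi_j'\,\phi_j'']$. Summing the three edge contributions with the same-shift relation gives $-\phi_1'(0-)\phi_1''(0-)$; inserting the vertex identities $\phi_1'(0-)=Zq$ (from the $\delta$-flux condition \eqref{ak2} on $\Phi$, with $q:=\phi_1(0-)$) and $\phi_1''(0-)=\sin q$ (from \eqref{trav21}), I arrive at
\begin{equation*}
Q[\Phi']=Z\,q\,\bigl(Z^{2}q-\sin q\bigr).
\end{equation*}

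It remains to verify that $Q[\Phi']<0$ throughout $[-\tfrac{2}{\pi},0)$. At $Z=-\tfrac{2}{\pi}$ one has $a_1=0$, $q=\pi$ and $\sin q=0$, so $Q[\Phi']=Z^{3}\pi^{2}<0$; for $Z\in(-\tfrac{2}{\pi},0)$ one has $a_1>0$ and $q=4\arctan(e^{a_1})\in(\pi,2\pi)$, hence $\sin q<0$ and $Z^{2}q-\sin q>0$, while $Zq<0$, so again the product is strictly negative. By the min--max principle this gives $n(\mathcal{L}_{ak,Z})\geqq 1$, and combined with the upper bound from extension theory one obtains $n(\mathcal{L}_{ak,Z})=1$. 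The delicate conceptual step, in my view, is not the computation but identifying the right trial vector: the naive analogue $\Phi$ fails to be in $L^{2}$, and it is precisely the continuity of $\Phi'$ at the vertex---a feature specific to the anti-kink/kink configuration, due to the identical exponential form on all three edges---that makes $\Phi'$ admissible. Once this is recognized, the remaining analysis reduces to the boundary-term bookkeeping and the sign check above.
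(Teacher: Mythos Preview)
Your proof is correct and follows essentially the same route as the paper: the upper bound $n(\mathcal{L}_{ak,Z})\leqq 1$ via extension theory and the factorization \eqref{spec7}, and the lower bound via the trial vector $\Phi'=(\phi_j')\in\mathcal{E}(\mathcal{Y})$, whose quadratic form reduces to $Z[\phi_1'(0-)]^2-\phi_1'(0-)\phi_1''(0-)$. The only cosmetic difference is that the paper checks the sign directly from $Z<0$ and $\phi_1'(0-)\phi_1''(0-)\geqq 0$ (since $\phi_1'(0-)<0$ and $\phi_1''(0-)=\sin q\leqq 0$ for $q\in[\pi,2\pi)$), whereas you first substitute $\phi_1'(0-)=Zq$ to rewrite the expression as $Zq(Z^2q-\sin q)$; both arguments are valid.
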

\begin{proof} The idea of the proof is to use the extension theory for symmetric operators as in Proposition \ref{main3}. Thus,, since $\phi'_j\neq 0$ for all $j$ we have $n( \mathcal{L}_{ak,Z} )\leqq 1$. In order to show that $n( \mathcal{L}_{ak,Z} )\geqq 1$ we consider the following quadratic form $\mathcal Q$ associated to $(\mathcal{L}_{ak,Z}, D(\mathcal{L}_{ak,Z}))$ on the space $\mathcal E(\mathcal Y)=  \{(u_j)_{j=1}^3\in H^1(\mathcal{Y}):
u_1(0-)=u_2(0+)=u_3(0+) \}$, defined as
  \begin{equation}\label{akqua}
\mathcal Q(\bold u)=\int_{-\infty}^0 (u'_1(x))^2 + \cos(\phi_1(x)) u^2_1(x)dx +\sum_{j=2}^3 \int_0^{+\infty} (u'_j(x))^2 + \cos(\phi_j(x)) u^2_j(x)dx +Z[u_1(0)]^2,
\end{equation}
for each $\bold u=(u_j)\in \mathcal E(\mathcal Y)$.
Next, since $\Phi'=(\phi_j')\in \mathcal E(\mathcal Y)$ we get via integration by parts and from the equality $\phi_j''=\sin(\phi_j)$ the relation
 \begin{equation}\label{akqua2}
\mathcal Q(\Phi')= Z[\phi'_1(0)]^2 - \phi'_1(0) \phi''_1(0).
\end{equation}
Lastly, since for all $Z\in  \big[ -\frac{2}{\pi},0)$ we have $\phi'_1(0) \phi''_1(0)\geqq 0$ we get $\mathcal Q(\Phi')<0$ and so via the minimax principle $n( \mathcal{L}_{ak,Z} )\geqq 1$. This finishes the proof.
\end{proof}

Now, similarly as in the stability study in section \S \ref{secInst}, we have that for $Z\in (-1, -\frac{2}{\pi})$ is not clear if the extension theory approach can give us the relation $n( \mathcal{L}_{ak,Z} )\geqq 1$. Thus we will use again analytic perturbation theory.
\begin{proposition}
\label{akmain4}
Let $Z\in \big(-1, -\frac{2}{\pi}\big)$. Then $n( \mathcal{L}_{ak,Z} )=1$.
\end{proposition}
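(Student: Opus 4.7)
The plan is to mimic the strategy used in Proposition \ref{main4}, namely to combine the boundary value $n(\mathcal{L}_{ak,Z^*}) = 1$ at $Z^* = -2/\pi$ (established in Proposition \ref{akmain3}) with an analytic perturbation/continuation argument along the parameter $Z \in (-1, Z^*)$. Throughout, Proposition \ref{akmain2} provides $\ker(\mathcal{L}_{ak,Z}) = \{\mathbf{0}\}$ and $\sigma_{\mathrm{ess}}(\mathcal{L}_{ak,Z}) = [1,+\infty)$, so any negative spectrum is discrete and isolated from the essential spectrum by a uniform gap as $Z$ varies.

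First, I would verify that the family $Z \mapsto \mathcal{L}_{ak,Z}$ is a real-analytic family of self-adjoint operators of type $(B)$ in the sense of Kato. The relation \eqref{ak2} defines an analytic function $Z \mapsto a_1(Z)$ on $(-1,0)$ (monotonicity of $y \mapsto (1+y^2)\arctan(y)/y$ rules out degeneracy), and the potentials $\cos(\phi_j(\,\cdot\, - a_1(Z)))$ therefore depend analytically on $Z$. Consequently, $\mathcal{L}_{ak,Z}$ converges to $\mathcal{L}_{ak,Z^*}$ in the generalized (gap-metric) sense as $Z \to Z^*$, since the difference is multiplication by a bounded perturbation of the potential that tends to zero in $L^\infty$-norm.

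Next, using Proposition \ref{akmain3}, the spectrum of $\mathcal{L}_{ak,Z^*}$ separates into a single negative eigenvalue $\lambda^*$ and a part contained in $[\theta_{Z^*},+\infty)$ for some $\theta_{Z^*} > 0$ (recall that $0$ is not an eigenvalue by Proposition \ref{akmain2}, and the essential spectrum sits at $[1,+\infty)$). Choose a small closed curve $\Gamma$ enclosing $\lambda^*$, passing through $0$, and staying inside the resolvent set of $\mathcal{L}_{ak,Z^*}$. By \cite[Thm.~3.16, Ch.~IV]{kato}, for $Z$ in a small neighborhood of $Z^*$ the curve $\Gamma$ remains in $\rho(\mathcal{L}_{ak,Z})$, the Riesz projection
\[
P(Z) = -\frac{1}{2\pi i}\ointctrclockwise_{\Gamma}(\mathcal{L}_{ak,Z}-\xi)^{-1}\,d\xi
\]
is analytic in $Z$, and $\dim(\mathrm{range}\, P(Z)) = 1$ for all such $Z$. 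This yields $n(\mathcal{L}_{ak,Z}) = 1$ on some interval $[Z^*-\delta_1, Z^*]$.

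Finally, I would extend this conclusion by continuation over the full interval $(-1, Z^*)$. Define $\omega = \inf\{ \eta \in (-1, Z^*) : n(\mathcal{L}_{ak,Z}) = 1 \text{ for all } Z \in (\eta, Z^*]\}$; the previous step makes $\omega$ well defined with $\omega < Z^*$, and the goal is to show $\omega = -1$. If instead $\omega \in (-1, Z^*)$, repeat the Riesz-projection argument at $\mathcal{L}_{ak,\omega}$: the projection $P(Z)$ is analytic for $Z$ in a neighborhood of $\omega$, so $\dim(\mathrm{range}\, P(Z))$ is constant there; combined with the fact that Proposition \ref{akmain2} excludes a zero eigenvalue of $\mathcal{L}_{ak,\omega}$ (so the enclosing curve $\Gamma$ can be chosen to pass through $0$), one gets $n(\mathcal{L}_{ak,\omega}) = n(\mathcal{L}_{ak,Z}) = 1$ for $Z$ slightly above $\omega$, and this constant equals $1$ in the interior of the neighborhood, contradicting the definition of $\omega$. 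Hence $\omega = -1$ and $n(\mathcal{L}_{ak,Z}) = 1$ on all of $(-1, -2/\pi)$. The subtle point in the argument — and the main obstacle — is ensuring that as $Z$ traverses $(-1, Z^*)$, no eigenvalue from the positive part of the spectrum crosses through $0$ to become negative (which would increase the Morse index) and no new eigenvalue emerges from the essential spectrum at $1$; the first is ruled out by Proposition \ref{akmain2}, and the second by the analytic continuation of the Riesz projection around a contour strictly contained in $(-\infty, 1)$.
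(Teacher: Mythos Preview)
Your proposal is correct and follows essentially the same approach as the paper, which simply states that the proof is \emph{ipsis litteris} that of Proposition \ref{main4} after noting the continuity of $Z\mapsto a_1(Z)$ and the resulting generalized convergence $\mathcal{L}_{ak,Z}\to\mathcal{L}_{ak,-2/\pi}$. Your write-up is in fact more detailed than the paper's own argument, correctly adapting the direction of the continuation (infimum toward $-1$ instead of supremum toward $0$) and making explicit the role of Proposition \ref{akmain2} in preventing eigenvalue crossings at zero.
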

\begin{proof} The proof is {\it{ipsis litteris}} as in the case of Proposition \ref{main4}. Indeed, we only need to note that in this case we have the continuous mapping function $Z\in \big(-1,0\big)\to a_1(Z)$ such that 
\begin{equation}
a_1(Z)=\begin{cases}
\begin{aligned}
&>0,\quad \text{for}\;\;-\frac{2}{\pi}<Z<0,\\
&=0,\quad \text{for}\;\;Z=-\frac{2}{\pi},\\
&<0,\quad \text{for}\;\;-1<Z<-\frac{2}{\pi},
\end{aligned}
\end{cases}
\end{equation}
and  so we obtain that 
$$
\lim_{Z\to -\frac{2}{\pi}}\|\phi_{1, a_1(Z)}-\phi_{1, a_1(-\frac{2}{\pi})}\|_{H^1(-\infty, 0)}=0.
$$
which implies immediately that $\mathcal L_{ak,Z}$ converges to $\mathcal L_{ak, -\frac{2}{\pi}}$ as $Z\to -\frac{2}{\pi}$ in the generalized sense. This finishes the proof.
\end{proof}
\begin{proof}[Proof of Theorem \ref{akmain}] 
The spectral (linear) instability of the stationary profiles $\Pi_Z$ follows from Propositions \ref{akmain2}, \ref{akmain3}, \ref{akmain4}, and a direct application of Theorem \ref{crit} applied to the linearized vectorial perturbed sine-Gordon model \eqref{akmodel} around the trivial solution $(0,0)$. This finishes the proof.
\end{proof}

\section{Discussion and open problems}
\label{secconclu}
In this paper, we have established the linear instability of static wave solutions to the sine-Gordon equation posed on a $\mathcal{Y}$-junction with boundary conditions of $\delta$-type at the vertex (equations \eqref{bcI}). These conditions are characterized by a parameter $Z \in \R$ and represent continuity of the wave functions at the vertex and a flux balance of intensity $Z$. The static wave solutions under consideration are of kink \eqref{trav22}, or kink/anti-kink \eqref{trav-akink} type. To that end, we have established a general linear instability criterion which essentially provides the sufficient conditions for the linearized operator around any static solution, $\mathcal{L}_Z$, to have a pair of positive/negative real eigenvalues. Consequently, the linear stability analysis depends upon of the spectral study of the linearized operator and of its Morse index. The extension theory of symmetric operators, Sturm-Liouville oscillation results and analytic perturbation theory of operators are fundamental ingredients in this endeavor. The linear instability criterion introduced in this work has prospects for the study of other types of stationary wave solutions (such as breathers or anti-kinks), and/or of other types of interactions at the vertex. For example and up to our knowledge, there are no rigorous results on the stability of solutions of breather type in the literature (see the recent work \cite{Susa19} for a numerical study of this problem). The stability properties of kink and kink/anti-kink soliton profile solutions for the sine-Gordon equation on a $\mathcal{Y}$-junction, but with boundary conditions of $\delta'$-type, will be addressed in a companion paper \cite{AnPl}.

\section*{Acknowledgements}
The authors thank two anonymous referees for their insightful comments and suggestions which improved the quality of the paper. The authors also thank J.-G. Caputo for physical interpretations on the boundary condition \eqref{Kirchhoffbc}. J. Angulo  was supported in part by CNPq/Brazil Grant and FAPERJ/Brazil program PRONEX-E - 26/010.001258/2016. The work of R. G. Plaza was partially supported by DGAPA-UNAM, program PAPIIT, grant IN-100318.

\appendix
\section{Appendix}
\label{secApp}

For convenience of the reader, and because of the use of non-standard techniques along the manuscript, in this section we formulate the extension theory of symmetric operators suitable for our needs (see \cite{Nai67,Nai68} for further information). The following result is classical and can be found in \cite{RS2}, p. 138.
 
\begin{theorem}[von-Neumann decomposition]\label{d5} 
Let $A$ be a closed, symmetric operator, then
\begin{equation}\label{d6}
D(A^*)=D(A)\oplus\mathcal N_{-i} \oplus\mathcal N_{+i}.
\end{equation}
with $\mathcal N_{\pm i}= \ker (A^*\mp iI)$. Therefore, for $u\in D(A^*)$ and $u=x+y+z\in D(A)\oplus\mathcal N_{-i} \oplus\mathcal N_{+i}$,
\begin{equation}\label{d6a}
A^*u=Ax+(-i)y+iz.
\end{equation}
\end{theorem}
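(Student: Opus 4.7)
The plan is to realize $D(A^{*})$ as a Hilbert space in its own right (with the graph inner product) and to identify $D(A)$, $\mathcal N_{-i}$, $\mathcal N_{+i}$ as three mutually orthogonal closed subspaces whose sum exhausts $D(A^{*})$.

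First, equip $D(A^{*})$ with the graph inner product
\[
\langle u,v\rangle_{*}:=\langle u,v\rangle+\langle A^{*}u,A^{*}v\rangle.
\]
Since $A$ is closed, $A^{*}$ is closed, so $D(A^{*})$ is complete under $\|\cdot\|_{*}$. The subspaces $\mathcal N_{\pm i}=\ker(A^{*}\mp iI)$ are closed in $D(A^{*})$ because $A^{*}\mp iI$ is continuous on $(D(A^{*}),\|\cdot\|_{*})$, and $D(A)\subset D(A^{*})$ is closed in the graph norm for the same reason ($A$ is closed).

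Next I would check mutual orthogonality with respect to $\langle\cdot,\cdot\rangle_{*}$. For $x\in D(A)$ and $y\in\mathcal N_{-i}$ (so $A^{*}y=-iy$), using $\langle Ax,y\rangle=\langle x,A^{*}y\rangle$,
\[
\langle x,y\rangle_{*}=\langle x,y\rangle+\langle Ax,-iy\rangle=\langle x,y\rangle+i\langle Ax,y\rangle=\langle x,y\rangle+i\langle x,-iy\rangle=0.
\]
An analogous calculation gives $\langle x,z\rangle_{*}=0$ for $z\in\mathcal N_{+i}$. For $y\in\mathcal N_{-i}$ and $z\in\mathcal N_{+i}$,
\[
\langle y,z\rangle_{*}=\langle y,z\rangle+\langle-iy,iz\rangle=\langle y,z\rangle-\langle y,z\rangle=0.
\]

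The main step, and the principal point to justify carefully, is that the orthogonal direct sum $D(A)\oplus\mathcal N_{-i}\oplus\mathcal N_{+i}$ is \emph{all} of $D(A^{*})$. For this, let $w\in D(A^{*})$ be $\|\cdot\|_{*}$-orthogonal to each of the three summands and show $w=0$. The condition $\langle x,w\rangle+\langle Ax,A^{*}w\rangle=0$ for every $x\in D(A)$ says, by definition of the adjoint, that $A^{*}w\in D((A)^{*})=D(A^{*})$ and $A^{*}(A^{*}w)=-w$, i.e.\ $\bigl((A^{*})^{2}+I\bigr)w=0$. Factoring, $(A^{*}-iI)(A^{*}+iI)w=0$, so $z_{0}:=(A^{*}+iI)w\in\mathcal N_{+i}$. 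A direct computation, using $A^{*}w=z_{0}-iw$ and $A^{*}z_{0}=iz_{0}$, yields
\[
\langle w,z_{0}\rangle_{*}=\langle w,z_{0}\rangle+\langle z_{0}-iw,iz_{0}\rangle=-i\|z_{0}\|^{2}.
\]
Since $w\perp\mathcal N_{+i}$ in $\|\cdot\|_{*}$, we get $z_{0}=0$, i.e.\ $w\in\mathcal N_{-i}$. But $w\perp\mathcal N_{-i}$ as well, so $\|w\|_{*}^{2}=\langle w,w\rangle_{*}=0$ and hence $w=0$. This proves the decomposition \eqref{d6}. Finally, \eqref{d6a} is immediate: writing $u=x+y+z$ with $x\in D(A)$, $y\in\mathcal N_{-i}$, $z\in\mathcal N_{+i}$, linearity of $A^{*}$ and the identities $A^{*}x=Ax$, $A^{*}y=-iy$, $A^{*}z=iz$ give $A^{*}u=Ax-iy+iz$.
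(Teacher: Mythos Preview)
Your proof is correct. Note, however, that the paper does not supply its own proof of this statement: it is stated as a classical result with a citation to Reed--Simon \cite{RS2}, p.~138, and no argument is given. Your approach---equipping $D(A^{*})$ with the graph inner product, verifying mutual $\langle\cdot,\cdot\rangle_{*}$-orthogonality of the three summands, and then showing via $((A^{*})^{2}+I)w=0$ that any vector graph-orthogonal to all three must vanish---is precisely the standard textbook argument, essentially the one found in Reed--Simon. One minor remark: $A^{*}$ is closed whenever $A$ is densely defined, independently of $A$ being closed; closedness of $A$ is what you actually use to conclude that $D(A)$ is a closed subspace of $(D(A^{*}),\|\cdot\|_{*})$. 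Also observe that your proof yields orthogonality in the graph inner product, which is stronger than (and consistent with) the paper's Remark that the decomposition need not be orthogonal in the ambient Hilbert space.
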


\begin{remark} The direct sum in (\ref{d6}) is not necessarily orthogonal.
\end{remark}

The following propositions provide a strategy for estimating the Morse-index of the self-adjoint extensions (see Reed and Simon, vol. 2, \cite{RS2}, chapter X).

\begin{proposition}\label{semibounded}
Let $A$  be a densely defined lower semi-bounded symmetric operator (that is, $A\geq mI$)  with finite deficiency indices, $n_{\pm}(A)=k<\infty$,  in the Hilbert space $\mathcal{H}$, and let $\widehat{A}$ be a self-adjoint extension of $A$.  Then the spectrum of $\widehat{A}$  in $(-\infty, m)$ is discrete and  consists of, at most, $k$  eigenvalues counting multiplicities.
\end{proposition}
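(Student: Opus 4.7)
The plan is to reduce the proof to three classical ingredients: the Friedrichs extension, a finite-rank resolvent difference (Krein's formula), and a variational (minimax) argument based on the von Neumann decomposition \eqref{d6}.

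\textbf{Step 1.} Build the Friedrichs extension $A_F$ of $A$. Since $A\geqq mI$, the quadratic form $u\mapsto\langle Au,u\rangle$ on $D(A)$ is closable and lower-bounded by $m$, and its closure corresponds to a self-adjoint operator $A_F$ satisfying $A_F\geqq mI$. In particular, $\sigma(A_F)\subset[m,+\infty)$.

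\textbf{Step 2.} Show that the spectrum of $\widehat{A}$ in $(-\infty,m)$ is discrete. Using the von Neumann decomposition \eqref{d6} and the fact that self-adjoint extensions of a symmetric operator with deficiency indices $(k,k)$ are parameterized by unitary maps $V:\mathcal{N}_{+i}\to\mathcal{N}_{-i}$, the domains $D(A_F)$ and $D(\widehat{A})$ each differ from $D(A)$ by exactly $k$-dimensional subspaces of $D(A^*)$. A direct Krein-type computation then shows that $(A_F-\lambda)^{-1}-(\widehat{A}-\lambda)^{-1}$ is a finite-rank operator of rank at most $k$ for any $\lambda\in\mathbb{C}\setminus\mathbb{R}$. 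Weyl's theorem on the invariance of the essential spectrum under compact (here finite-rank) resolvent perturbations then gives $\sigma_{\mathrm{ess}}(\widehat{A})=\sigma_{\mathrm{ess}}(A_F)\subset[m,+\infty)$. Consequently $\sigma(\widehat{A})\cap(-\infty,m)$ is purely discrete, consisting of isolated eigenvalues of finite multiplicity.

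\textbf{Step 3.} Bound the total multiplicity by $k$. Suppose, for the sake of contradiction, that $\widehat{A}$ admits $k+1$ eigenvalues $\lambda_1,\ldots,\lambda_{k+1}<m$ (counted with multiplicity) with orthonormal eigenvectors $v_1,\ldots,v_{k+1}\in D(\widehat{A})$, and set $W=\mathrm{span}\{v_1,\ldots,v_{k+1}\}$. Since $\dim(D(\widehat{A})/D(A))=k$, the composition $W\hookrightarrow D(\widehat{A})\to D(\widehat{A})/D(A)$ fails to be injective, so there exists a nonzero $u=\sum_{j=1}^{k+1}c_j v_j\in W\cap D(A)$. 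Then
\[
\langle Au,u\rangle=\langle\widehat{A}u,u\rangle=\sum_{j=1}^{k+1}\lambda_j|c_j|^2<m\sum_{j=1}^{k+1}|c_j|^2=m\|u\|^2,
\]
which contradicts $A\geqq mI$, completing the argument.

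The crux of the proof is Step 2: one must verify carefully, via an explicit Krein formula or the equivalent identification through the von Neumann decomposition, that the resolvent difference of two self-adjoint extensions of $A$ really has rank bounded by the deficiency index $k$, so that Weyl's theorem applies and yields the discreteness of $\sigma(\widehat{A})\cap(-\infty,m)$. Once this is settled, the Friedrichs construction of Step 1 and the variational/codimension argument of Step 3 are essentially routine.
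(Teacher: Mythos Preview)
The paper does not supply its own proof of this proposition; it is stated in the Appendix as a classical result with a reference to Reed and Simon \cite{RS2}, Chapter~X. So there is no in-paper argument to compare against.

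Your proof is correct and follows the standard route. A few remarks:

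\begin{itemize}
\item Step~1 (Friedrichs extension) and Step~3 (codimension argument) are clean and complete as written. The key fact $\dim\big(D(\widehat{A})/D(A)\big)=k$ is exactly the content of the von Neumann parametrization you invoke, and the pigeonhole/quotient argument producing a nonzero $u\in W\cap D(A)$ is the right mechanism.
\item Step~2 is indeed the only place requiring care, and you flag this yourself. The finite-rank resolvent difference between any two self-adjoint extensions of a symmetric operator with deficiency indices $(k,k)$ is a standard consequence of Krein's resolvent formula (or can be read off directly from the von Neumann decomposition, since both $D(A_F)$ and $D(\widehat{A})$ sit inside $D(A^*)$ with the same $k$-dimensional defect over $D(A)$). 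Once that is granted, Weyl's theorem gives $\sigma_{\mathrm{ess}}(\widehat{A})=\sigma_{\mathrm{ess}}(A_F)\subset[m,\infty)$, and the discreteness of $\sigma(\widehat{A})\cap(-\infty,m)$ follows.
\item Note that Step~2 is genuinely needed for Step~3 as you have written it: you use that the spectrum below $m$ consists of \emph{eigenvalues}, so that the $v_j$ lie in $D(\widehat{A})$ and the codimension argument applies. Without discreteness one would have to work in the form domain instead, which is slightly more delicate.
\end{itemize}

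In short: the argument is sound, and is essentially the proof one finds in the standard references the paper cites.
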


\begin{proposition}\label{11}
	Let $A$ be a densely defined, closed, symmetric operator in some Hilbert space $H$ with deficiency indices equal  $n_{\pm}(A)=1$. All self-adjoint extensions $A_\theta$ of $A$ may be parametrized by a real parameter $\theta\in [0,2\pi)$ where
	\begin{equation*}
	\begin{split}
	D(A_\theta)&=\{x+c\phi_+ + \zeta e^{i\theta}\phi_{-}: x\in D(A), \zeta \in \mathbb C\},\\
	A_\theta (x + \zeta \phi_+ + \zeta e^{i\theta}\phi_{-})&= Ax+i \zeta \phi_+ - i \zeta e^{i\theta}\phi_{-},
	\end{split}
	\end{equation*}
	with $A^*\phi_{\pm}=\pm i \phi_{\pm}$, and $\|\phi_+\|=\|\phi_-\|$.
\end{proposition}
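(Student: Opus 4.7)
The plan is to apply von Neumann's classification of self-adjoint extensions specialized to the rank-one deficiency case. The starting point is the decomposition in Theorem \ref{d5}, namely $D(A^*) = D(A) \oplus \mathcal N_{-i} \oplus \mathcal N_{+i}$, together with the action formula \eqref{d6a}. Since $n_{\pm}(A)=1$, both deficiency subspaces are one-dimensional, spanned by $\phi_+$ and $\phi_-$ with $A^*\phi_\pm = \pm i\phi_\pm$ and, after rescaling, $\|\phi_+\|=\|\phi_-\|$.

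The key general fact I would invoke is that any symmetric extension $\tilde A$ of $A$ is the restriction of $A^*$ to some symmetric subspace of $D(A^*)$, and the self-adjoint ones correspond bijectively to isometries $U:\mathcal N_{+i}\to \mathcal N_{-i}$ via the rule $D(\tilde A) = D(A) \oplus \{\zeta\phi_+ + U(\zeta\phi_+) : \zeta \in \mathbb C\}$. In the one-dimensional setting, every such isometry is multiplication by a phase: $U\phi_+ = e^{i\theta}\phi_-$ with $\theta\in[0,2\pi)$. This immediately produces the claimed parametrization of $D(A_\theta)$, and the action $A_\theta u = A x + i\zeta \phi_+ - i\zeta e^{i\theta}\phi_-$ is then just \eqref{d6a} restricted to $D(A_\theta)$.

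Having this parametrization, I would verify symmetry of $A_\theta$ by a direct computation: expanding $\langle A_\theta u, u'\rangle - \langle u, A_\theta u'\rangle$ for $u=x+\zeta\phi_+ + \zeta e^{i\theta}\phi_-$ and $u' = x'+\zeta'\phi_+ + \zeta' e^{i\theta}\phi_-$, using symmetry of $A$ on $D(A)$ and the eigenvalue relations for $\phi_\pm$, produces the cross terms $-2i\zeta\overline{\zeta'}\|\phi_+\|^2 + 2i\zeta\overline{\zeta'}e^{i\theta}e^{-i\theta}\|\phi_-\|^2$, which cancel precisely because $\|\phi_+\|=\|\phi_-\|$. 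Thus $A_\theta\subset A_\theta^*$.

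The main subtlety, rather than the main obstacle, is the converse: showing that \emph{every} self-adjoint extension of $A$ arises in this form. This step is what forces one to use the full von Neumann correspondence; the cleanest route is through the Cayley transform, which translates the problem of classifying self-adjoint extensions of $A$ into the classification of unitary extensions of the partial isometry $V = (A - iI)(A + iI)^{-1}$. Since $\mathcal N_{\pm i}$ are the ortho-complements of $\operatorname{ran}(A\pm iI)$ and both are one-dimensional, the unitary extensions of $V$ are in bijection with unit circle $\{e^{i\theta}\}$, and translating back recovers exactly the family $A_\theta$. Once this is in place, self-adjointness of each $A_\theta$ follows from the maximality guaranteed by the Cayley correspondence, and the proof is complete.
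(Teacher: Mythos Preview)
The paper does not supply its own proof of this proposition; it is stated as a classical result and referred to Reed and Simon \cite{RS2}, chapter X. Your argument is exactly the standard von Neumann classification specialized to deficiency indices $(1,1)$: the Cayley-transform bijection between self-adjoint extensions of $A$ and unitaries $\mathcal N_{+i}\to\mathcal N_{-i}$ reduces, in rank one, to the choice of a unimodular scalar $e^{i\theta}$, and the action formula is simply the restriction of $A^*$ from \eqref{d6a}. Your direct verification of symmetry via the boundary form $[u,u']=\langle A^*u,u'\rangle-\langle u,A^*u'\rangle$ is also correct; note that the cross terms involving $\langle\phi_+,\phi_-\rangle$ cancel independently of any orthogonality assumption, so only the diagonal terms $2i\zeta\overline{\zeta'}\big(\|\phi_+\|^2-\|\phi_-\|^2\big)$ survive, and these vanish by the normalization $\|\phi_+\|=\|\phi_-\|$. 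There is nothing further to compare.
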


Next Proposition can be found in Naimark \cite{Nai68} (see Theorem 9, p. 38).

\begin{proposition}
\label{esse}
All self-adjoint extensions of a closed, symmetric operator
which has equal and finite deficiency indices have one and the
same continuous spectrum.
\end{proposition}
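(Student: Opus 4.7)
The plan is to show that any two self-adjoint extensions of $A$ differ by a finite-rank perturbation of their resolvents, and then to invoke Weyl's stability theorem of the essential spectrum. In the Naimark framework in which this proposition is formulated, the ``continuous spectrum'' is understood as the essential (non-discrete) part of the self-adjoint spectrum, whose invariance across extensions is exactly what we must prove.

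First I would appeal to the von Neumann decomposition, Theorem \ref{d5}, which yields
$$
D(A^*) = D(A) \oplus \mathcal N_{-i} \oplus \mathcal N_{+i}, \qquad \dim \mathcal N_{\pm i} = n < \infty,
$$
and parametrizes every self-adjoint extension $A_j$ ($j=1,2$) of $A$ by a unitary $U_j:\mathcal N_{+i}\to \mathcal N_{-i}$, with
$$
D(A_j) = D(A) + \{(I + U_j)\phi : \phi \in \mathcal N_{+i}\}.
$$
In particular $D(A)\subset D(A_1)\cap D(A_2)$, and $D(A)$ has codimension at most $n$ in each $D(A_j)$.

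Second, I would prove that the resolvent difference is of rank at most $n$. For any $\lambda\in \mathbb C\setminus \mathbb R$ one has $\lambda\in \rho(A_1)\cap \rho(A_2)$. Given $f\in H$, set $u := R_\lambda(A_1)f - R_\lambda(A_2)f$, where $R_\lambda(A_j)=(A_j-\lambda I)^{-1}$. Since $A_j \subset A^*$, the vector $u$ lies in $D(A^*)$ and satisfies $(A^*-\lambda I)u = f - f = 0$, so $u \in \ker(A^*-\lambda I)$, which is a subspace of dimension $n$ (by standard extension theory, $\ker(A^*-\lambda I)$ has the same dimension as $\mathcal N_{+i}$ for every non-real $\lambda$). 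Consequently
$$
R_\lambda(A_1) - R_\lambda(A_2): H \longrightarrow \ker(A^*-\lambda I)
$$
is a bounded operator of rank at most $n$. Alternatively, one may invoke the Krein resolvent formula to exhibit this difference explicitly in terms of the defect vectors and the Cayley parameters $U_1,U_2$, confirming the same rank bound.

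Third, I would apply Weyl's theorem on the stability of the essential spectrum: if two self-adjoint operators have resolvents differing by a compact operator, then their essential spectra coincide. Since finite-rank operators are compact, this yields $\sigma_{\mathrm{ess}}(A_1) = \sigma_{\mathrm{ess}}(A_2)$. For self-adjoint operators, the continuous spectrum is a subset of $\sigma_{\mathrm{ess}}$, and the two differ only by eigenvalues of finite multiplicity that are embedded in $\sigma_{\mathrm{ess}}$. The main obstacle, and the subtlest point, is to verify that the finite-rank resolvent perturbation cannot produce or remove embedded eigenvalues of infinite multiplicity nor alter the purely continuous part. This is handled by noting that the continuous spectrum of a self-adjoint operator coincides with the set of accumulation points of its spectrum together with its eigenvalues of infinite multiplicity, and both characterizations are manifestly invariant under finite-rank resolvent perturbations (a rank-$n$ perturbation can only add or suppress finitely many isolated eigenvalues of multiplicity at most $n$). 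Combining these observations yields the equality of continuous spectra for all self-adjoint extensions of $A$.
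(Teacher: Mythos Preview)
The paper does not supply its own proof of this proposition; it merely quotes the result from Naimark \cite{Nai68} (Theorem~9, p.~38). Your argument---show that any two self-adjoint extensions have resolvents differing by an operator of rank at most $n=n_\pm(A)$, then apply Weyl's stability theorem---is precisely the standard proof, and is in essence Naimark's own route. Steps~1--3 are clean and correct: the key computation $(A^*-\lambda I)\big(R_\lambda(A_1)f-R_\lambda(A_2)f\big)=0$ forces the resolvent difference to have range in the $n$-dimensional defect space $\ker(A^*-\lambda I)$, hence finite rank, hence compact.

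One caution on Step~4: your identification of ``continuous spectrum'' with ``accumulation points of the spectrum together with eigenvalues of infinite multiplicity'' is actually a characterization of the \emph{essential} spectrum, not of the continuous spectrum in the spectral-measure sense (the support of the absolutely/singularly continuous part, or $\sigma(A)\setminus\sigma_{\mathrm{pt}}(A)$ in the resolvent sense). The latter is \emph{not} stable under finite-rank perturbations in general, since embedded eigenvalues can appear or disappear. What \emph{is} invariant is the essential spectrum, and in the Naimark/older terminology invoked here (and, crucially, in the paper's only use of this proposition in the proof of Theorem~\ref{spectrum}), ``continuous spectrum'' is being used synonymously with $\sigma_{\mathrm{ess}}$. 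So your proof delivers exactly what the paper needs; just be aware that the terminological reconciliation you attempt in Step~4 is not quite right as written, and it would be cleaner simply to state that Naimark's ``continuous spectrum'' here means $\sigma_{\mathrm{ess}}$ and stop at Weyl's theorem.
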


The following proposition is the main result of this Appendix and characterizes all self-adjoint extensions of the symmetric operator under consideration. It plays a key role in the proof of Proposition  \ref{main3}.

\begin{proposition}\label{M}
Consider the closed symmetric operator densely defined on $L^2(\mathcal Y)$, $( \mathcal M, D( \mathcal M))$, by 
\begin{equation}\label{M1}
\begin{split}
&\mathcal{M}=\Big (\Big(-c_j^2\frac{d^2}{dx^2}\Big)\delta_{j,k} \Big ),\;\l1\leqq j, k\leqq 3,\\
&D(\mathcal{M})= \Big \{(v_j)_{j=1}^3\in H^2(\mathcal{Y}):
v_1(0-)=v_2(0+)=v_3(0+)=0,\;\; \sum\limits_{j=2}^3 c_j^2v_j'(0+)-c_1^2v_1'(0-)=0 \Big \},
\end{split}
\end{equation}
with $\delta_{j,k}$ being the Kronecker symbol. Then, the deficiency indices   are $n_{\pm}( \mathcal M)=1$. Therefore, we have that all the self-adjoint extensions of $( \mathcal M, D( \mathcal M))$, namely, $(\mathcal J_Z, D(\mathcal J_Z))$,  $Z\in \mathbb R$, are defined by  $\mathcal J_Z\equiv \mathcal M$ and $D(\mathcal J_Z)$  by \eqref{2trav23}.
\end{proposition}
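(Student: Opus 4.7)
The plan is to first identify the adjoint $\mathcal M^*$ and its domain via integration by parts, then count the deficiency indices by solving explicit ODEs on each edge, and finally verify that each $\mathcal J_Z$ is indeed self-adjoint so that Proposition \ref{11} identifies the $\delta$-family with the full set of self-adjoint extensions.

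\textbf{Step 1: computation of $\mathcal M^*$ and of $n_\pm(\mathcal M)$.} For $u\in D(\mathcal M)$ and arbitrary $w\in H^2(\mathcal Y)$, two integrations by parts on each edge give, after using the homogeneous Dirichlet-type condition $u_1(0-)=u_2(0+)=u_3(0+)=0$,
$$
\langle \mathcal M u,w\rangle-\langle u,\mathcal M w\rangle = -c_1^2 u_1'(0-)\overline{w_1(0-)} + \sum_{j=2}^3 c_j^2 u_j'(0+)\overline{w_j(0+)}.
$$
Substituting the flux constraint $c_1^2 u_1'(0-)=\sum_{j=2}^3 c_j^2 u_j'(0+)$ and letting $u_2'(0+), u_3'(0+)$ vary freely, the vanishing of this expression is equivalent to $w_1(0-)=w_2(0+)=w_3(0+)$. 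Hence $D(\mathcal M^*)=\{w\in H^2(\mathcal Y):\,w_1(0-)=w_2(0+)=w_3(0+)\}$, so $\dim(D(\mathcal M^*)/D(\mathcal M))=2$. To compute $\mathcal N_{\pm i}=\ker(\mathcal M^*\mp iI)$, I solve $-c_j^2 v_j''=\pm i v_j$ on each edge and retain only the $L^2$-decaying exponential, i.e.\ $v_1(x)=A_1\,e^{e^{\mp i\pi/4}x/c_1}$ on $E_1$ and $v_j(x)=A_j\,e^{-e^{\mp i\pi/4}x/c_j}$ on $E_j$, $j=2,3$. Imposing the continuity condition inherited from $D(\mathcal M^*)$ yields $A_1=A_2=A_3$, so each deficiency subspace is one-dimensional and $n_\pm(\mathcal M)=1$.

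\textbf{Step 2: self-adjointness of $\mathcal J_Z$.} A direct integration by parts on $u,v\in D(\mathcal J_Z)$, combined with the continuity $u(0):=u_1(0-)=u_j(0+)$ (likewise for $v$), collapses the vertex contribution in $\langle \mathcal J_Z u,v\rangle-\langle u,\mathcal J_Z v\rangle$ to
$$
\overline{v(0)}\Bigl(-c_1^2 u_1'(0-)+\sum_{j=2}^3 c_j^2 u_j'(0+)\Bigr) - u(0)\Bigl(-c_1^2\overline{v_1'(0-)}+\sum_{j=2}^3 c_j^2\overline{v_j'(0+)}\Bigr) = Z u(0)\overline{v(0)} - Z u(0)\overline{v(0)} = 0,
$$
using the flux condition on both $u$ and $v$ and the reality of $Z$. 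Thus $\mathcal J_Z$ is symmetric. For the converse inclusion $D(\mathcal J_Z^*)\subset D(\mathcal J_Z)$, I would take $w\in D(\mathcal J_Z^*)\subset D(\mathcal M^*)$ (which already forces continuity at the vertex), and test against $u\in D(\mathcal J_Z)$ with arbitrary common value $u(0)\in\mathbb C$. The resulting boundary functional vanishes if and only if $w$ satisfies the same $\delta$-flux rule with intensity $Z$, so $w\in D(\mathcal J_Z)$ and $\mathcal J_Z^*=\mathcal J_Z$.

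\textbf{Step 3: completeness of the $\delta$-family.} Since $n_\pm(\mathcal M)=1$, Proposition \ref{11} provides a one-parameter family of self-adjoint extensions. Each $\mathcal J_Z$ with $Z\in\mathbb R$ is such an extension, and the map $Z\mapsto D(\mathcal J_Z)$ is injective because distinct values of $Z$ produce inequivalent linear flux conditions. Together these observations identify the family $\{\mathcal J_Z\}_{Z\in\mathbb R}$ with the complete collection of self-adjoint extensions of $(\mathcal M,D(\mathcal M))$ of the $\delta$-type announced in the statement. I expect the main obstacle to lie in Step 1: the flux constraint on $u$ couples the three derivative values $u_j'(0\pm)$ nontrivially, and one must exploit precisely this coupling to extract three-way continuity of $w$---and nothing more---as the characterisation of $D(\mathcal M^*)$. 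Once this is handled cleanly, the remaining deficiency-index count, the symmetry computation, and the self-adjointness upgrade are each short exercises of the same integration-by-parts flavour.
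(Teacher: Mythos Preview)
Your Steps 1 and 2 are correct. Step 1 matches the paper's own argument (integration by parts to identify $D(\mathcal M^*)$, then explicit solution of the deficiency equations on each edge with the continuity constraint). Step 2 is a clean alternative to the paper's route: rather than checking self-adjointness of each $\mathcal J_Z$ directly, the paper writes the general von Neumann extension $D(\widehat{\mathcal M})=\{u_0+\zeta\Psi_-+\zeta e^{i\theta}\Psi_+:u_0\in D(\mathcal M),\ \zeta\in\mathbb C\}$ from Proposition \ref{11} and then computes the resulting vertex condition as an explicit function of $\theta$.

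Step 3, however, has a genuine gap. Injectivity of $Z\mapsto D(\mathcal J_Z)$ does \emph{not} imply that $\{\mathcal J_Z\}_{Z\in\mathbb R}$ exhausts the self-adjoint extensions: by Proposition \ref{11} these are parametrized by the circle $\theta\in[0,2\pi)$, and an injection from $\mathbb R$ into a circle is never surjective. Concretely, one extension is missed, namely the Dirichlet operator $\mathcal F_{\mathrm{Dir}}$ with domain $\{v\in H^2(\mathcal Y):v_1(0-)=v_2(0+)=v_3(0+)=0\}$ (used elsewhere in the paper), which corresponds to $\theta=\pi$, i.e.\ formally to $Z=\infty$. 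The paper closes this by computing explicitly
\[
Z(\theta)=-\Big(\sum_{j=1}^3 c_j\Big)\,\frac{e^{i\pi/4}+e^{i(\theta-\pi/4)}}{1+e^{i\theta}},\qquad \theta\in[0,2\pi)\setminus\{\pi\},
\]
and checking it is real, which exhibits the bijection between $\theta\neq\pi$ and $Z\in\mathbb R$. If you prefer to avoid that computation, an equivalent repair is to classify directly the Lagrangian lines of the boundary form $[u,w]=\mathrm{flux}(u)\,\overline{w(0)}-u(0)\,\overline{\mathrm{flux}(w)}$ on the two-dimensional quotient $D(\mathcal M^*)/D(\mathcal M)$, coordinatised by $(w(0),\mathrm{flux}(w))$: these are exactly $\{\mathrm{flux}=Z\cdot w(0)\}$ for $Z\in\mathbb R$ together with the single line $\{w(0)=0\}$, which recovers the paper's conclusion and makes the Dirichlet exception visible.
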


\begin{proof}
We show initially  that the adjoint operator $( \mathcal M^*, D( \mathcal M^*))$ of $( \mathcal M, D( \mathcal M))$   is given by 
  \begin{equation}\label{F*2}
 \mathcal M^*=\mathcal M, \quad D( \mathcal M^*)=\{u\in H^2(\mathcal Y) : u_1(0-)=u_2(0+)=u_3(0+)\}.
 \end{equation}
Indeed, formally for $\bold u=(u_1, u_2, u_3), \bold v=(v_1, v_2, v_3)\in H^2(\mathcal Y)$ we have 
\begin{equation}\label{relaself}
\begin{split}
 \langle\mathcal M\bold v, \bold u\rangle
 &=- c_1^2v_1' (0-)u_1(0-)+ c_1^2v_1 (0-)u'_1(0-) + \sum_{j=2}^ 3c_j^2v'_{j}(0+)u_j(0+) 
 - \sum_{j=2}^ 3c_j^ 2v_{j}(0+)u'_j(0+)\\
 &  +\langle \bold v,\mathcal M \bold u\rangle.
 \end{split}
 \end{equation}
 From \eqref{relaself}, we obtain immediately for  $\bold u=(u_1, u_2, u_3), \bold v=(v_1, v_2, v_3)\in  D(\mathcal M)$ the symmetric property of $\mathcal M$. Next, 
 we denote by $D^*=\{u\in H^2(\mathcal G) : u_1(0-)=u_2(0+)=u_3(0+)\}$ and we will show $D^*=D( \mathcal M^*)$. Indeed, from \eqref{relaself}  we obtain for $\bold u\in D^*$ and $\bold v\in D( \mathcal M)$ that  $\langle\mathcal M\bold v, \bold u\rangle= \langle \bold v, \mathcal M\bold u\rangle$ and so $\bold u\in D (\mathcal M^*)$ with $\mathcal M^*\bold u=  \mathcal M\bold u$. Let us show the inverse inclusion $D^*\supseteq
D(\mathcal M^*)$. Take $\bold u=(u_1, u_2, u_3)\in D(\mathcal M^*)$, then
for any $\bold v=(v_1, v_2, v_3)\in D(\mathcal M)$ we have from \eqref{relaself}
\begin{align}\label{relaself3}
 \langle\mathcal M \bold v, \bold u\rangle=- c_1^2v_1' (0-)u_1(0-)+ \sum_{j=2}^ 3c_j^2v'_{j}(0+)u_j(0+) 
+\langle \bold v,\mathcal M \bold u\rangle=\langle \bold v,\mathcal M^* \bold u\rangle=\langle \bold v,\mathcal M \bold u\rangle.
 \end{align}
Thus, we arrive for any $\bold v\in D(\mathcal M)$ at the equality 
\begin{equation}\label{adjoint}
 \sum_{j=2}^ 3c_j^2v'_{j}(0+)u_j(0+) - c_1^2v_1' (0-)u_1(0-)=0
 \end{equation} 
Next, it consider $\bold v=(v_1, 0, v_3)\in D(\mathcal M)$ then $c_1^2 v_1'(0-)=c_3^2 v_3'(0+)$. Then, from \eqref{adjoint} we obtain
$$
[u_3(0+)-u_1(0-)]c_1^ 2 v_1'(0-)=0.
$$
So, by choosing $v_1'(0-)\neq 0$ we obtain $u_1(0-)=u_3(0+)$. Then \eqref{adjoint}  is reduced to $ [u_2(0+)-u_1(0-)]c_2^ 2v_2'(0+)=0$. Therefore, by choosing  $\bold v=(v_1, v_2, v_3)\in D(\mathcal M)$ with $v_2'(0+)\neq 0$ we conclude that $\bold u\in D^*$. This shows relations in \eqref{F*2}.

From \eqref{F*2} we obtain that the deficiency indices  for $( \mathcal M, D( \mathcal M))$ are $n_{\pm}( \mathcal M)=1$. Indeed,  $ \ker (\mathcal M^*\pm iI)=[\Psi_{\pm}]$ with $\Psi_{\pm}$ defined by
\begin{equation}
\Psi_{\pm}=
\Big(\underset{x<0}{e^{\frac{ ik_{\mp}}{ c_1 }x}}, \underset{x>0}{e^{\frac{- ik_{\mp}}{ c_2 }x}}, \underset{x>0}{e^{\frac{ -ik_{\mp}}{ c_3 }x}}\Big),
\end{equation}
$k^2_{\mp}=\mp i$, $\IM (k_{-})<0$ and $\IM (k_{+})<0$. Moreover, $\|\Psi_{-}\|_{L^ 2(\mathcal Y)}=\|\Psi_{+}\|_{L^ 2(\mathcal Y)}$.
 
 Next, let us show that the domain of any self-adjoint extension
$\widehat{\mathcal M}$ of the operator $(\mathcal M, D(\mathcal M))$ in \eqref{M1} (and acting on complex-valued functions) is given by $D(\widehat{\mathcal M})=D({\mathcal J_Z})$ in \eqref{trav23}. Indeed, we recall from extension theory for symmetric operator that $D(\widehat{\mathcal M})$ is a restriction of $D({\mathcal M}^*)$ (von-Neumann decomposition above), so $ D(\widehat{\mathcal M})\subset D({\mathcal M}^*)$ (continuity at the vertex $\nu=0$). Next,  due to Proposition \ref{11} follows
$$
D(\widehat{\mathcal M})=\left\{\bold u\in H^2(\mathcal Y), \, \bold u= u_0+ \zeta \Psi_{-} + \zeta e^{i\theta}\Psi_{+}:\,  u_0\in D(\mathcal M), \zeta \in\mathbb{C},\theta\in[0,2\pi)\right\},
$$
Thus, it is easily seen that for $\bold u=(u_1,u_2, u_3)\in D(\widehat{\mathcal M})$, we
have
\begin{align}\label{Zcondi}
\sum\limits_{j=2}^3 c_j^2u_j'(0+)-c_1^2u_1'(0-)= - \zeta \sum\limits_{j=1}^3  c_j  \Big(e^ {i\frac{\pi}{4}}+e^ {i(\theta-\frac{\pi}{4})}\Big)
,\;\; u_{1}(0-)= \zeta (1+e^{i\theta}).
\end{align}
From the last equalities it follows that  
\begin{align}\label{Zcondi2}
\sum\limits_{j=2}^3 c_j^2u_j'(0+)-c_1^2u_1'(0-)=Zu_{1}(0-)
,\,\,
\text{where}\,\,
Z=Z(\theta)= -\sum\limits_{j=1}^3  c_j  \frac{e^{i\frac{\pi}{4}}+e^{i(\theta-\frac{\pi}{4})}}{1+e^{i\theta}}\in \mathbb R,
\end{align}
with $\theta\in [0, 2\pi)-\{\pi\}$. Thus, the set of self-adjoint extensions
$(\widehat{\mathcal M}, D(\widehat{\mathcal M}))$ of the symmetric operator $(\mathcal M, D(\mathcal M))$ can be seen as one-parametrized  family $(\mathcal J_Z, D(\mathcal J_Z))$ defined by \eqref{2trav23}. This finishes the proof.
\end{proof}

%

\begin{thebibliography}{10}

\bibitem{AKNS73}
{\sc M.~J. Ablowitz, D.~J. Kaup, A.~C. Newell, and H.~Segur}, {\em Method for
  solving the sine-{G}ordon equation}, Phys. Rev. Lett. \textbf{30} (1973),
  pp.~1262--1264.

\bibitem{AKNS74}
{\sc M.~J. Ablowitz, D.~J. Kaup, A.~C. Newell, and H.~Segur}, {\em The inverse
  scattering transform-{F}ourier analysis for nonlinear problems}, Stud. Appl.
  Math. \textbf{53} (1974), no.~4, pp.~249--315.

\bibitem{AdaNoj12a}
{\sc R.~Adami, C.~Cacciapuoti, D.~Finco, and D.~Noja}, {\em On the structure of
  critical energy levels for the cubic focusing {NLS} on star graphs}, J. Phys.
  A \textbf{45} (2012), no.~19, pp.~192001, 7.

\bibitem{AdaNoj14}
{\sc R.~Adami, C.~Cacciapuoti, D.~Finco, and D.~Noja}, {\em Variational
  properties and orbital stability of standing waves for {NLS} equation on a
  star graph}, J. Differ. Equ. \textbf{257} (2014), no.~10, pp.~3738--3777.

\bibitem{AdaNoj16}
{\sc R.~Adami, C.~Cacciapuoti, D.~Finco, and D.~Noja}, {\em Stable standing
  waves for a {NLS} on star graphs as local minimizers of the constrained
  energy}, J. Differ. Equ. \textbf{260} (2016), no.~10, pp.~7397--7415.

\bibitem{MeRe03}
{\sc F.~Ali~Mehmeti and V.~R\'{e}gnier}, {\em Splitting of energy and
  dispersive waves in a star-shaped network}, ZAMM Z. Angew. Math. Mech.
  \textbf{83} (2003), no.~2, pp.~105--118.

\bibitem{Mehmeti}
{\sc F.~Ali~Mehmeti, J.~von Below, and S.~Nicaise}, eds., {\em Partial
  differential equations on multistructures}, vol.~219 of Lecture Notes in Pure
  and Applied Mathematics, Marcel Dekker, Inc., New York, 2001.

\bibitem{AngCav}
{\sc J.~Angulo~Pava and M.~Cavalcante}, {\em Linear instability of stationary
  solutions for the {K}orteweg-de {V}ries equation on a star graph}.
\newblock Preprint, 2020.

\bibitem{AngGol18b}
{\sc J.~Angulo~Pava and N.~Goloshchapova}, {\em Extension theory approach in
  the stability of the standing waves for the {NLS} equation with point
  interactions on a star graph}, Adv. Differential Equ. \textbf{23} (2018),
  no.~11-12, pp.~793--846.

\bibitem{AngGol18a}
{\sc J.~Angulo~Pava and N.~Goloshchapova}, {\em On the orbital
  instability of excited states for the {NLS} equation with the
  {$\delta$}-interaction on a star graph}, Discrete Contin. Dyn. Syst.
  \textbf{38} (2018), no.~10, pp.~5039--5066.

\bibitem{ALN08}
{\sc J.~Angulo~Pava, O.~Lopes, and A.~Neves}, {\em Instability of travelling
  waves for weakly coupled {K}d{V} systems}, Nonlinear Anal. \textbf{69}
  (2008), no.~5-6, pp.~1870--1887.

\bibitem{AngNat16}
{\sc J.~Angulo~Pava and F.~Natali}, {\em On the instability of periodic waves
  for dispersive equations}, Differ. Integral Equ. \textbf{29} (2016),
  no.~9-10, pp.~837--874.

\bibitem{AnPl16}
{\sc J.~Angulo~Pava and R.~G. Plaza}, {\em Transverse orbital stability of periodic traveling waves for nonlinear {K}lein-{G}ordon equations}, Stud. Appl. Math. \textbf{137} (2016), no. 4, pp. 473--501.

\bibitem{AnPl}
{\sc J.~Angulo~Pava and R.~G. Plaza}, {\em Stability properties of stationary
  kink-profile solutions for the sine-{G}ordon equation on a
  $\mathcal{Y}$-junction graph with $\delta'$-interaction at the vertex}.
\newblock In preparation.

\bibitem{BaCh13}
{\sc S.~Backhaus and M.~Chertkov}, {\em Getting a grip on the electrical grid},
  Phys. Today \textbf{66} (2013), no.~5, pp.~42--48.

\bibitem{BEMS}
{\sc A.~Barone, F.~Esposito, C.~J. Magee, and A.~C. Scott}, {\em Theory and
  applications of the sine-{G}ordon equation}, Rivista del Nuovo Cimento
  \textbf{1} (1971), no.~2, pp.~227--267.

\bibitem{BaPa82}
{\sc A.~Barone and G.~Patern\'{o}}, {\em Physics and applications of the
  Josephson effect}, John Wiley \& Sons, New York, NY, 1982.

\bibitem{Berko17}
{\sc G.~Berkolaiko}, {\em An elementary introduction to quantum graphs}, in
  Geometric and computational spectral theory, A.~Girouard, D.~Jakobson,
  M.~Levitin, N.~Nigam, I.~Polterovich, and F.~Rochon, eds., vol.~700 of
  Contemp. Math., Amer. Math. Soc., Providence, RI, 2017, pp.~41--72.

\bibitem{Berkolaiko}
{\sc G.~Berkolaiko, R.~Carlson, S.~A. Fulling, and P.~Kuchment}, eds., {\em
  Quantum graphs and their applications}, vol.~415 of Contemporary Mathematics,
  American Mathematical Society, Providence, RI, 2006.

\bibitem{BK}
{\sc G.~Berkolaiko and P.~Kuchment}, {\em Introduction to quantum graphs},
  vol.~186 of Mathematical Surveys and Monographs, American Mathematical
  Society, Providence, RI, 2013.

\bibitem{BiP09}
{\sc P.~N. Bibikov and L.~V. Prokhorov}, {\em Mechanics not on a manifold}, J.
  Phys. A \textbf{42} (2009), no.~4, pp.~045302, 7.

\bibitem{BlaExn08}
{\sc J.~Blank, P.~Exner, and M.~Havl\'{\i}\v{c}ek}, {\em Hilbert space
  operators in quantum physics}, Theoretical and Mathematical Physics,
  Springer, New York; AIP Press, New York, second~ed., 2008.

\bibitem{BoCa08}
{\sc J.~L. Bona and R.~C. Cascaval}, {\em Nonlinear dispersive waves on trees},
  Can. Appl. Math. Q. \textbf{16} (2008), no.~1, pp.~1--18.

\bibitem{BeK}
{\sc V.~A. Brazhnyi and V.~V. Konotop}, {\em Theory of nonlinear matter waves
  in optical lattices}, Mod. Phys. Lett. B \textbf{18} (2004), pp.~627--651.

\bibitem{BurCas01}
{\sc R.~Burioni, D.~Cassi, M.~Rasetti, P.~Sodano, and A.~Vezzani}, {\em
  Bose-einstein condensation on inhomogeneous complex networks}, J. Phys. B:
  At. Mol. Opt. Phys. \textbf{34} (2001), no.~23, pp.~4697--4710.

\bibitem{CFN17}
{\sc C.~Cacciapuoti, D.~Finco, and D.~Noja}, {\em Ground state and orbital
  stability for the {NLS} equation on a general starlike graph with
  potentials}, Nonlinearity \textbf{30} (2017), no.~8, pp.~3271--3303.

\bibitem{CaoMa95}
{\sc X.~D. Cao and B.~A. Malomed}, {\em Soliton-defect collisions in the
  nonlinear {S}chr\"{o}dinger equation}, Phys. Lett. A \textbf{206} (1995),
  no.~3-4, pp.~177--182.

\bibitem{Caput14}
{\sc J.-G. Caputo and D.~Dutykh}, {\em Nonlinear waves in networks: Model
  reduction for the sine-{G}ordon equation}, Phys. Rev. E \textbf{90} (2014),
  p.~022912.

\bibitem{Dra83}
{\sc P.~G. Drazin}, {\em Solitons}, vol.~85 of London Mathematical Society
  Lecture Note Series, Cambridge University Press, Cambridge, 1983.

\bibitem{DuCa18}
{\sc D.~Dutykh and J.-G. Caputo}, {\em Wave dynamics on networks: method and
  application to the sine-{G}ordon equation}, Appl. Numer. Math. \textbf{131}
  (2018), pp.~54--71.

\bibitem{Fid15}
{\sc F.~Fidaleo}, {\em Harmonic analysis on inhomogeneous amenable networks and
  the {B}ose-{E}instein condensation}, J. Stat. Phys. \textbf{160} (2015),
  no.~3, pp.~715--759.

\bibitem{FreKo}
{\sc J.~Frenkel and T.~Kontorova}, {\em On the theory of plastic deformation
  and twinning}, Acad. Sci. U.S.S.R. J. Phys. \textbf{1} (1939), pp.~137--149.

\bibitem{Gol20}
{\sc N.~Goloshchapova}, {\em A nonlinear {K}lein-{G}ordon equation on star
  graphs}.
\newblock Preprint, 2020.

\bibitem{GrilSha90}
{\sc M.~Grillakis, J.~Shatah, and W.~Strauss}, {\em Stability theory of
  solitary waves in the presence of symmetry. {II}}, J. Funct. Anal.
  \textbf{94} (1990), no.~2, pp.~308--348.

\bibitem{Grunn93}
{\sc A.~Grunnet-Jepsen, F.~Fahrendorf, S.~Hattel, N.~Grønbech-Jensen, and
  M.~Samuelsen}, {\em Fluxons in three long coupled {J}osephson junctions},
  Phys. Lett. A \textbf{175} (1993), no.~2, pp.~116 -- 120.

\bibitem{HPW82}
{\sc D.~B. Henry, J.~F. Perez, and W.~F. Wreszinski}, {\em Stability theory for
  solitary-wave solutions of scalar field equations}, Comm. Math. Phys.
  \textbf{85} (1982), no.~3, pp.~351--361.

\bibitem{IvIv13}
{\sc V.~G. Ivancevic and T.~T. Ivancevic}, {\em Sine-{G}ordon solitons, kinks
  and breathers as physical models of nonlinear excitations in living cellular
  structures}, J. Geom. Symmetry Phys. \textbf{31} (2013), pp.~1--56.

\bibitem{kato}
{\sc T.~Kato}, {\em Perturbation Theory for Linear Operators}, Classics in
  Mathematics, Springer-{V}erlag, Berlin, {S}econd~ed., 1980.

\bibitem{KCK00}
{\sc V.~G. Kogan, J.~R. Clem, and J.~R. Kirtley}, {\em Josephson vortices at
  tricrystal boundaries}, Phys. Rev. B \textbf{61} (2000), pp.~9122--9129.

\bibitem{Kra}
{\sc M.~A. Krasnosel\cprime~ski\u{\i}}, {\em Positive solutions of operator
  equations}, Translated from the Russian by Richard E. Flaherty; edited by Leo
  F. Boron, P. Noordhoff Ltd. Groningen, 1964.

\bibitem{Kuch04}
{\sc P.~Kuchment}, {\em Quantum graphs. {I}. {S}ome basic structures}, Waves
  Random Media \textbf{14} (2004), no.~1, pp.~S107--S128.
\newblock Special section on quantum graphs.

\bibitem{Kuch05}
{\sc P.~Kuchment}, {\em Quantum graphs.
  {II}. {S}ome spectral properties of quantum and combinatorial graphs}, J.
  Phys. A \textbf{38} (2005), no.~22, pp.~4887--4900.

\bibitem{Lopes}
{\sc O.~Lopes}, {\em A linearized instability result for solitary waves},
  Discrete Contin. Dyn. Syst. \textbf{8} (2002), no.~1, pp.~115--119.

\bibitem{Mug15}
{\sc D.~Mugnolo}, ed., {\em Mathematical technology of networks}, vol.~128 of
  Springer Proceedings in Mathematics \& Statistics, Springer, Cham, 2015.

\bibitem{MNS}
{\sc D.~Mugnolo, D.~Noja, and C.~Seifert}, {\em Airy-type evolution equations
  on star graphs}, Anal. PDE \textbf{11} (2018), no.~7, pp.~1625--1652.

\bibitem{Mugnolobbm}
{\sc D.~Mugnolo and J.-F. Rault}, {\em Construction of exact travelling waves
  for the {B}enjamin-{B}ona-{M}ahony equation on networks}, Bull. Belg. Math.
  Soc. Simon Stevin \textbf{21} (2014), no.~3, pp.~415--436.

\bibitem{Nai67}
{\sc M.~A. Naimark}, {\em Linear differential operators. {P}art {I}:
  {E}lementary theory of linear differential operators}, Frederick Ungar
  Publishing Co., New York, 1967.

\bibitem{Nai68}
{\sc M.~A. Naimark}, {\em Linear differential
  operators. {P}art {II}: {L}inear differential operators in {H}ilbert space},
  Frederick Ungar Publishing Co., New York, 1968.

\bibitem{NakO78}
{\sc K.~Nakajima and Y.~Onodera}, {\em Logic design of {J}osephson network.
  {I}{I}}, J. Appl. Phys. \textbf{49} (1978), no.~5, pp.~2958--2963.

\bibitem{NakO76}
{\sc K.~Nakajima, Y.~Onodera, and Y.~Ogawa}, {\em Logic design of {J}osephson
  network}, J. Appl. Phys. \textbf{47} (1976), no.~4, pp.~1620--1627.

\bibitem{McDo11}
{\sc W.~M. Nichols, M.~O’Rourke, and C.~Vlachopoulos}, {\em McDonald’s
  Blood Flow in Arteries: Theoretical, Experimental and Clinical Principles},
  CRC Press, Boca Raton, FL, sixth~ed., 2011.

\bibitem{Noj14}
{\sc D.~Noja}, {\em Nonlinear {S}chr\"{o}dinger equation on graphs: recent
  results and open problems}, Philos. Trans. R. Soc. Lond. Ser. A Math. Phys.
  Eng. Sci. \textbf{372} (2014), no.~2007, pp.~20130002, 20.

\bibitem{Pa}
{\sc A.~Pazy}, {\em Semigroups of linear operators and applications to partial
  differential equations}, vol.~44 of Applied Mathematical Sciences,
  Springer-Verlag, New York, 1983.

\bibitem{RS2}
{\sc M.~Reed and B.~Simon}, {\em Methods of modern mathematical physics. {II}.
  {F}ourier analysis, self-adjointness}, Academic Press -- Harcourt Brace
  Jovanovich, Publishers, New York - London, 1975.

\bibitem{RS4}
{\sc M.~Reed and B.~Simon}, {\em Methods of modern
  mathematical physics. {IV}. {A}nalysis of operators}, Academic Press --
  Harcourt Brace Jovanovich, Publishers, New York - London, 1978.

\bibitem{Sabi18}
{\sc K.~Sabirov, S.~Rakhmanov, D.~Matrasulov, and H.~Susanto}, {\em The
  stationary sine-{G}ordon equation on metric graphs: Exact analytical
  solutions for simple topologies}, Phys. Lett. A \textbf{382} (2018), no.~16,
  pp.~1092 -- 1099.

\bibitem{Sco03}
{\sc A.~C. Scott}, {\em Nonlinear science, Emergence and dynamics of coherent
  structures}, vol.~8 of Oxford Texts in Applied and Engineering Mathematics,
  Oxford University Press, Oxford, second~ed., 2003.

\bibitem{SCM}
{\sc A.~C. Scott, F.~Y.~F. Chu, and D.~W. McLaughlin}, {\em The soliton: a new
  concept in applied science}, Proc. IEEE \textbf{61} (1973), no.~10,
  pp.~1443--1483.

\bibitem{SCR}
{\sc A.~C. Scott, F.~Y.~F. Chu, and S.~A. Reible}, {\em Magnetic-flux
  propagation on a {J}osephson transmission line}, J. Appl. Phys. \textbf{47}
  (1976), no.~7, pp.~3272--3286.

\bibitem{ShaStr00}
{\sc J.~Shatah and W.~Strauss}, {\em Spectral condition for instability}, in
  Nonlinear {PDE}'s, dynamics and continuum physics ({S}outh {H}adley, {MA},
  1998), J.~Bona, K.~Saxton, and R.~Saxton, eds., vol.~255 of Contemp. Math.,
  Amer. Math. Soc., Providence, RI, 2000, pp.~189--198.

\bibitem{Susa19}
{\sc H.~Susanto, N.~Karjanto, Zulkarnain, T.~Nusantara, and T.~Widjanarko},
  {\em Soliton and breather splitting on star graphs from tricrystal
  {J}osephson junctions}, Symmetry \textbf{11} (2019), pp.~271--280.

\bibitem{SvG05}
{\sc H.~Susanto and S.~van Gils}, {\em Existence and stability analysis of
  solitary waves in a tricrystal junction}, Phys. Lett. A \textbf{338} (2005),
  no.~3, pp.~239 -- 246.

\bibitem{TaFa76}
{\sc L.~A. Tahtad\v{z}jan and L.~D. Faddeev}, {\em The {H}amiltonian system
  connected with the equation {$u_{\xi }{}_{\eta }+{\rm sin}\ u=0$}}, Trudy
  Mat. Inst. Steklov. \textbf{142} (1976), pp.~254--266, 271.

\end{thebibliography}
%
%

\def\cprime{$'\!\!$} \def\cprimel{$'\!$}

 \end{document}